\newcommand{\+}[1]{\ensuremath{\boldsymbol {#1}}}
\newtheorem{thm}{Theorem}[section]
\newtheorem{pro}{Proposition}[section]
\newtheorem{lem}{Lemma}[section]
\newtheorem{corr}{Corollary}[section]
\newtheorem{defin}{Definition}[section]
\newtheorem{ob}{Observation}[section]
\newenvironment{thm-prf}{\begin{thm} \nopagebreak}{\end{thm}}
\newenvironment{pro-prf}{\begin{pro} \nopagebreak}{\end{pro}}
\newenvironment{lem-prf}{\begin{lem} \nopagebreak}{\end{lem}}
\newenvironment{corr-prf}{\begin{corr} \nopagebreak}{\end{corr}}
\title{Implicit Regularization via Hadamard Product Over-Parametrization in High-Dimensional Linear Regression}
\author[1]{Peng Zhao}
\author[2]{Yun Yang\thanks{Y. Yang's research is supported in part by NSF DMS-1810831.}}
\author[3]{Qiao-Chu He}
\affil[1]{Department of Statistics, Florida State University}
\affil[2]{Department of Statistics, University of Illinois Urbana-Champaign}
\affil[3]{Southern University of Science and Technology}
\date{\vspace{-3em}}
\begin{document}

	\def\spacingset#1{\renewcommand{\baselinestretch}%
		{#1}\small\normalsize} \spacingset{1}

	
%
	
	\maketitle
	\bigskip
	\spacingset{1.6}
	\begin{abstract}
		We consider Hadamard product parametrization as a change-of-variable (over-parametrization) technique for solving least square problems in the context of linear regression. Despite the non-convexity and exponentially many saddle points induced by the change-of-variable, we show that under certain conditions, this over-parametrization leads to implicit regularization: if we directly apply gradient descent to the residual sum of squares with sufficiently small initial values, then under proper early stopping rule, the iterates converge to a nearly sparse rate-optimal solution with relatively better accuracy than explicit regularized approaches. In particular, the resulting estimator does not suffer from extra bias due to explicit penalties, and can achieve the parametric root-$n$ rate (independent of the dimension) under proper conditions on the signal-to-noise ratio.
We perform simulations to compare our methods with high dimensional linear regression with explicit regularizations. Our results illustrate advantages of using implicit regularization via gradient descent after over-parametrization in sparse vector estimation.
	\end{abstract}
	
	\noindent%
	{\it Keywords:}  High-dimensional regression; Implicit regularization; Over-parametrization; Early Stopping; Gradient Descent; Variable selection.
	\vfill
			
\newpage
\spacingset{1.6} 
\section{Introduction}
\label{sec:intro}
In high dimensional linear regression
\begin{align*}
y=X\beta^\ast+w,\quad\mbox{with noise }w \sim \mathcal{N}(0,\sigma^2 I),
\end{align*}
the goal is to parsimoniously predict the response $y\in \mathbb R^n$ as a linear combination of a large number of covariates $X=(X_1,X_2,\ldots,X_p)\in\mathbb R^{n\times p}$, and conduct statistical inference on the linear combination coefficients $\beta^\ast=(\beta_1^\ast,\ldots,\beta_p^\ast)^T\in\mathbb R^p$ \citep{tibshirani1996regression,donoho2006compressed}. By leveraging on certain lower dimensional structure in the regression coefficient vector $\beta^\ast\in\mathbb R^p$ such as a sparsity constraint $s=\|\beta^\ast\|_0\ll n$, where $\|\beta^\ast\|_0$ counts the number of nonzeros in $\beta^\ast$, the number $p$ of covariates is allowed to be substantially larger than the sample size $n$. Due to the intrinsic computational hardness in dealing with the $\ell_0$ metric reflecting sparsity, people instead use different metrics as surrogates, and cast the estimation problem into various convex or nonconvex optimization problems. Many approaches have been proposed for high dimensional regression by solving certain penalized optimization problem, including basis pursuit \citep{chen2001atomic}, the Lasso \citep{tibshirani1996regression}, the Dantzig selector \citep{candes2007dantzig}, SCAD \citep{fan2001variable}, MCP \citep{zhang2010nearly} and so on. In this work, we focus on the recovery of $\beta^\ast\in\mathbb R^p$ without explicitly specifying a penalty.

Recent work~\citep{hoff2017lasso} shows that through a change-of-variable (over-parametrization) via Hadamard product parametrization, the Lagrangian (dual) form of the non-smooth convex optimization problem for the Lasso~\eqref{Eqn:CS}:
\begin{equation}\label{Eqn:CS}
\min_{\beta} \frac{1}{2n}\|X\beta-y\|^2+\lambda \|\beta\|_{1}, \quad\mbox{with }\|\beta\|_1:\,=\sum_{j=1}^p|\beta_j|,
\end{equation}
can be reformulated as a smoothed optimization problem at a cost of introducing non-convexity. Due to the smoothness feature, simple and low-cost first-order optimization methods such as gradient descent and coordinate descent can be applied to recover $\beta^\ast$. Despite the non-convexity and exponentially many stationary points induced by the change-of-variable, these first-order algorithms exhibit encouraging empirical performance~\citep{hoff2017lasso}.

In this work, we consider the same Hadamard product over-parametrization $\beta = g\circ l$ as in \cite{hoff2017lasso}, where $g, \,l\in\mathbb R^p$ and $\circ$ denotes the Hadamard product (element-wise product). Instead of solving the penalized optimization problem~\eqref{Eqn:CS}, we consider directly applying the gradient descent to the quadratic loss function
\begin{equation}\label{eq_opt}
f(g,l)=\frac{1}{2n}\,\|X(g\circ l)-y\|^2.
\end{equation}
In the noiseless case where $\sigma=0$, minimizing $f(g,\,l)$ jointly over $(g,\,l)$ is a highly non-convex optimization problem with exponentially many saddle points. To see this, notice that each non-zero pattern of $\beta$ corresponds to at least one saddle point by choosing $g_j=l_j=0$ for each $j$ such that $\beta_j=0$. In addition, due to the column rank deficiency of the design matrix $X$ (for example, when $p>n$), there are infinitely many global minimizers of \eqref{eq_opt} as potential convergent points of the gradient descent. 
Interestingly, we show that despite these seemingly hopeless difficulties, in the noiseless case if we initialize the gradient descent arbitrarily close to $g=l=0$, then under
the prominent Restricted Isometry Property (RIP) condition~\citep{candes2008restricted} on the design matrix $X$, a properly tuned gradient descent converges to least $\ell_1$-norm solution within error $\varepsilon$ in $\mathcal O(\log\frac{C}{\varepsilon})$ iterations, where constant $C$ depends on the RIP constant, step size of the gradient descent, and some other characteristics of the problem. Our proofs borrow ideas from \cite{li2018algorithmic}, where they prove the algorithmic convergence of matrix factorized gradient descent in the context of noiseless matrix sensing under the RIP.

In high dimensional regression,  the usual regularized least square is known to suffer from the so-called saturation phenomenon \citep{vito2005learning,yao2007early}, where the overall estimation error is dominated by a bias term due to the penalty. In particular, since regularization is artificially introduced for restricting the ``effective size'' of the parameter space, the resulting estimator may be deteriorated and the estimation error cannot fall below the penalty level to adapt to a possibly faster convergence rate. For example, the estimator by solving the Lasso achieves the minimax rate of $\sqrt{s}\lambda\asymp\sqrt{s \log p/n}$. However, this worse-case rate only happens when there exist weak signals, meaning that some nonzero $\beta^\ast_j$'s have a borderline magnitude of order $\sqrt{s\log p/n}$. In fact, if all signals are sufficiently strong, or significantly larger this borderline magnitude,
then the faster dimension-independent parametric rate $\sqrt{s/n}$ is attainable. For regularized approaches such the Lasso, one possible way to remove the penalty-induced bias term (whose order is $\lambda$) is to refit the model with the selected variables. However, this two stage procedure requires stringent assumptions on the minimal signal strength to guarantee variable selection consistency for the first stage, and will suffer from weak signals. Interestingly, we show that by combining the Hadamard product over-parametrization with early stopping, a widely used regularization technique in boosting \citep{zhang2005boosting} and nonparametric regression \citep{raskutti2014early}, our method can overcome the saturation issue and lead to more accurate estimation. 
More precisely, in the presence of random noise $w$ in the linear model, the solution path 
of minimizing the quadratic loss function~\eqref{eq_opt} as we increase the gradient descent iteration still tends to converge to the least $\ell_1$-norm solution that will overfit the data.
Fortunately, by terminating the gradient descent updating procedure earlier within a proper number of iterations, we may find a solution that optimally balances between the model complexity (reflected by the increasing $\ell_1$-norm of the iterate) and goodness fit of the model, akin the bias-variance trade-off.
In particular, we show that the estimator can adapt to an optimal convergence rate of $\sqrt{s/n}$ when all signals are relatively strong. Generally, when both strong signals and weak signals exist, our estimator attains the rate $\sqrt{s_1/n}+\sqrt{s_2 \log p/n}$ (with $s_1, s_2$ denoting thenumber of strong signals and weak signals, respectively).

Our result also complements the recent surge of literature on over-parametrization for implicit regularization of the first-order iterative method for non-convex optimization in machine learning. \cite{gunasekar2017implicit} introduce the phenomenon of implicit regularization in matrix factorization, where they empirically observe the convergence of gradient methods in matrix factorization problem to the minimal nuclear norm solution as the the initial value tends to zero. However, they only provide some heuristic illustration under some hard-to-check assumptions such as the continuity of the solution relative to the change in the initialization. Later, \cite{li2018algorithmic} rigorously prove the implicit regularization in matrix sensing problem under a matrix RIP condition. Some other very recent works such \cite{pmlr-v80-gunasekar18a} and \cite{soudry2018implicit} study implicit regularizations in mirror descent and in classification problems. Note that all above implicit regularization literatures only consider data that are either noiseless (regression) or perfectly separable (classification). To our best knowledge, we are the first to rigorously study and utilize implicit regularization in high dimensional linear regression where responses are noisy.

In a nutshell, we show that through a simple change-of-variable, the non-smooth $\ell_1$- penalized optimization problem~\eqref{Eqn:CS} can be transformed to an unconstrained smooth quadratic loss minimization one; moreover, a simple gradient descent initialized near zero efficiently solves this non-convex optimization problem with provable guarantees. Furthermore, our method enjoy several advantages over existing procedures for high dimensional linear regression under sparsity constraints. First, our method is computationally efficient --- its time complexity is $O(np)$, which is linear in both $n$ and $p$.  Second, despite the non-convexity nature, our method has a natural initialization that provably leads the optimal solution.
In comparison, penalized $M$-estimators based on non-convex penalties such as SCAD and MCP require stringent conditions on their initializations: to obtain good estimators, they require good initial values that are sufficiently closed to the truth (theoretically) or satisfy some restricted strong convexity conditions \citep{zhao2018pathwise}, otherwise their optimization algorithms will suffer from bad local minima with bad generalization errors. In contrast, our algorithm only requires the initialization to be closed to zero. Moreover, unlike penalized approaches such as SCAD and MCP, where both parameters for the noise level and the concavity of the penalty need to be tuned, our method only need to tune the number of iterations.


To conclude, our main contributions with respect to the relative literatures are as follows:
\begin{enumerate}
	\item We propose an estimator by combining early stopping with implicit regularization to overcome the saturation issues in high dimensional regression with explicit regularizations;
	\item Unlike recent implicit regularization literatures that exclusively focus on noiseless data, we are the first to rigorously study the effect of implicit regularization for noisy data;
	\item From computational perspective, we transform the non-smooth optimization problem to an unconstrained smooth quadratic loss minimization problem for which standard optimization tools can be applied.
\end{enumerate}

\section{Background and Our Method}
To begin with, we formally introduce the setup and notations used throughout the paper. After that, we introduce the intuition for our new implicit regularized algorithm for high dimensional linear regression via Hadamard product parameterization.

\subsection{Setup and notations}
Recall that $\beta^\ast$ is the unknown $s$-sparse signal in $\mathbb{R}^{p}$ to be recovered. Let $S\subset\{1,\ldots,p\}$ denote the index set that corresponds to the nonzero components of $\beta^\ast$, and the size $|S|$ of $S$ is then $s$.
For two vectors $g, l\in \mathbb{R}^{p}$, we call $\beta = g\circ l \in\mathbb R^p$ as their Hadamard product, whose components are $\beta_j = g_jl_j$ for $j=1,\ldots p$. For two vectors $a,b\in\mathbb R^p$, we use the notation $a\geq b$ to indicate element-wise ``great than or equal to''.
When there is no ambiguity, we use $\beta^2=\beta\circ \beta$ to denote the self-Hadamard product of $\beta$.  For a function $f:\mathbb R^p\times \mathbb R^p \to \mathbb R$, $(g,\,l)\mapsto f(g,\,l)$, we use $\nabla_g f$ and $\nabla_l f$ to denote its partial derivative relative to $g$ and $l$, respectively. 
For any index set $J\subset \{1,\ldots,p\}$ and vector $a\in \mathbb R^p$, we use $a_J=(a_j:\, j\in J)$ to denote the sub-vector of $a$ formed by concatenating the components indexed by $J$. Let $\mathbf 1\in\mathbb R^p$ denote the vector with all entries as $1$, and $I$ as the identity matrix in $\mathbb R^p$.  Let $I_J$ be the diagonal matrix with one on the $j$th diagonal for $j\in J$ and $0$ elsewhere. For a vector $a\in\mathbb R^p$, we use $\|a\|$ to denote its vector-$\ell_2$-norm, and $\|a\|_\infty=\max_{j}|a_j|$ its $\ell_\infty$-norm. Let $\mbox{Unif}(a,b)$ to denote the uniform distribution over interval $(a,b)$. For a symmetric matrix $A$, let 
$\lambda_{\min}(A)$ denote its smallest eigenvalue. For two sequences $\{a_n\}$ and $\{b_n\}$, we use the notation $a_n\lesssim b_n$ or $a_n\gtrsim b_n$ to mean there exist some constant $c$ and $C$ independent of $n$ such that $a_n \leq Cb_n$ or $a_n \geq cb_n$ for all $n<0$, respectively, and $a_n\asymp b_n$ to mean $a_n\lesssim b_n$ and $b_n \lesssim a_n$.

\subsection{Gradient descent with Hadamard product parametrization}
As we mentioned in the introduction, we consider augmenting the $p$-dimensional vector $\beta$ into two $p$-dimensional vectors $g,\,l$ through $\beta=g\circ l$. Instead of solving the Lasso problem~\eqref{Eqn:CS} with $\beta$ replaced with $g\circ l$, we consider directly applying gradient descent to the quadratic loss function $f(g,l)=(2n)^{-1}\|X(g\circ l)-y\|^2$. 
In particular, we apply the updating formula $g_{t+1}=g_t-\eta \nabla f_g(g_t,\,l_t)$, $l_{t+1}=l_t-\eta \nabla_l f(g_t,l_t)$, with random initial values $g_0$ and $l_0$ chosen close enough to $0$ (notice that $(0,0)$ is a saddle point of the objective function, so we need to apply a small perturbation $\alpha$ on the initial values).
This leads to the following algorithm:
\smallskip

\begin{algorithm}[H]
\KwData{Design matrix $X\in\mathbb R^{n\times p}$,\, measurement vector $y\in\mathbb R^n$, initialization magnitude $\alpha$, step size $\eta$, and stopping threshold $\epsilon$;}
 Initialize variables $[g_0]_j\overset{iid}{\sim}\mbox{Unif}(-\alpha,\alpha)$, $[l_0]_j\overset{iid}{\sim}\mbox{Unif}(-\alpha,\alpha)$ for $j=1,\ldots,p$, and iteration number $t=0$;\\
 \While{$\ \|X(g_t\circ l_t)-y\|/\sqrt{n}>\epsilon\ $}{
  ${g}_{t+1}=g_t-\eta \ l_t \circ \big[n^{-1}\,X^{T}\big(X(g_t\circ l_t)-y\big)\big]$;\\ 
  $\, {l}_{t+1}=l_t-\eta \ g_t \circ \big[n^{-1}\,X^{T}\big(X(g_t\circ l_t)-y\big)\big]$; \\[0.2em]
  $\, t=t+1$;\\
}
 \KwResult{Output the final estimate $\widehat \beta=g_t\circ l_t$;
 }\label{alg1}
 \caption{Gradient Descent for linear regression}
\end{algorithm}
\smallskip

Algorithm~\ref{alg1} is the standard gradient descent algorithm, and the iterates $(g_{t+1},l_{t+1})$ tend to converge to a stationary point $(g_\infty, l_{\infty})$ of $f(g,l)$ that satisfies the first order optimality condition $\nabla f_g(g_\infty,\,l_\infty) = 0$ and $\nabla f_l(g_\infty,\,l_\infty) = 0$. However, stationary points of $f(g,l)$ can be local minimum, local maximum, or saddle points (when the Hessian matrix $\nabla^2_{g,l} f(g,l)$ contains both positive and negative eigenvalues).
The following result provides the optimization landscape of $f(g,l)$, showing that $f(g,l)$ does not have local maximum, all its local minimums are global minimum, and all saddle points are strict. The strict saddle points are saddle points with negative smallest eigenvalues for Hessian matrix.

\begin{lem}\label{Lem:global_min}
$f(g,l)=(2n)^{-1}\|X(g\circ l)-y\|^2$ does not have local maximum, and all its local minimums are global minimum. In particular, $(\bar g, \bar l\,)$ is a global minimum of $f(g,l)$ if and only if
\begin{align*}
X^T\big(X(\bar g\circ \bar l)-y\big) = 0.
\end{align*}
In addition, any saddle point $(g^\dagger,l^\dagger)$ of $f(g,l)$ is a strict saddle, that is,
$\lambda_{\min}\big(\nabla^2_{g,l} f(g^\dagger,l^\dagger)\big)<0$.
\end{lem}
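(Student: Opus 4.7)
I would prove the three assertions by a direct gradient and Hessian calculation. Writing $r(g,l) = X(g\circ l) - y$ and $u(g,l) = n^{-1}X^T r(g,l)$, the chain rule gives
\[
\nabla_g f(g,l) = l \circ u(g,l), \qquad \nabla_l f(g,l) = g \circ u(g,l),
\]
so every stationary point $(\bar g,\bar l)$ falls into exactly one of two cases: (i) $u(\bar g,\bar l)=0$, which is precisely the normal equation $X^T(X\bar\beta-y)=0$ for $\bar\beta:=\bar g\circ\bar l$; or (ii) $u(\bar g,\bar l)\neq 0$, in which case $\bar g_j = \bar l_j = 0$ at every index $j$ with $u_j(\bar g,\bar l)\neq 0$. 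Because every $\beta\in\mathbb{R}^p$ admits the trivial decomposition $\beta = \beta \circ \mathbf 1$, the infimum of $f$ coincides with the minimum of the underlying least-squares problem $\min_\beta (2n)^{-1}\|X\beta - y\|^2$, whose minimizers are exactly the $\beta$ satisfying the normal equation. Thus case (i) already identifies the global minima of $f$ and delivers the stated ``iff'' characterization.

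Next, I would compute the Hessian quadratic form. Writing out the four blocks ($\nabla^2_{gg}f = \diag(l)\,A\,\diag(l)$, $\nabla^2_{ll}f = \diag(g)\,A\,\diag(g)$, $\nabla^2_{gl}f = \diag(l)\,A\,\diag(g)+\diag(u)$, with $A = n^{-1}X^TX$) and collecting terms, the quadratic form collapses to
\[
(v,w)^T \nabla^2_{g,l} f(g,l)\,(v,w) \;=\; \frac{1}{n}\bigl\|X(l\circ v + g\circ w)\bigr\|^2 + 2\,u(g,l)^T(v\circ w),
\]
for every $(v,w)\in\mathbb{R}^{2p}$. The first summand is nonnegative since $A$ is PSD, so all negativity must come from the second. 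To show that every stationary point in case (ii) is a strict saddle, I would pick an index $j$ with $u_j(\bar g,\bar l)\neq 0$ and plug in $v = e_j$, $w = -\sgn(u_j(\bar g,\bar l))\,e_j$: because $\bar g_j = \bar l_j = 0$, the PSD summand vanishes and the whole quadratic form reduces to $-2|u_j(\bar g,\bar l)| < 0$, proving $\lambda_{\min}(\nabla^2_{g,l}f(\bar g,\bar l)) < 0$.

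To finish, any local minimum must have a PSD Hessian, which by the preceding calculation rules out case (ii), so every local minimum lies in case (i) and is therefore a global minimum. For the absence of local maxima, a local maximum would require an NSD Hessian; setting $w=0$ in the formula above forces $n^{-1}\|X(l\circ v)\|^2 \leq 0$ for every $v$, i.e.\ $l_j X_j = 0$ for every $j$, and symmetrically $g_j X_j = 0$ for every $j$, after which the quadratic form reduces to $2u^T(v\circ w)$ alone; evaluating this on both signs of coordinate directions forces $u=0$, placing the point back into case (i) so that it is in fact a global minimum and cannot be a genuine local maximum. \textbf{Main obstacle.} The principal care is in correctly assembling the Hessian formula---specifically, keeping track of the diagonal $\diag(u)$ contribution from differentiating $l\circ u$ with respect to $g$ (and vice versa), which is ultimately what drives strict saddleness. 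Once that formula is in hand, the sharp trick is the coordinate choice $(e_j,-\sgn(u_j)\,e_j)$, which simultaneously annihilates the PSD piece via the identity $\bar g_j = \bar l_j = 0$ at a coordinate where $u_j\neq 0$, and isolates a guaranteed strictly negative contribution.
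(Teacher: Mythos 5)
Your proposal is correct, and the strict-saddle half is essentially the paper's argument: the same Hessian assembly (the $\diag(u)$ cross-term included) and the same test direction $(e_j,\,-\sgn(u_j)e_j)$ at an index where $\bar g_j=\bar l_j=0$ and $u_j\neq 0$; in fact your value $-2|u_j|$ for the quadratic form is the correct one (the paper's $-2(R^\dagger_{j_0})^2$ is a harmless slip in magnitude, and it drops the $n^{-1}$ normalization). Where you genuinely diverge is in the first two claims. The paper never touches second-order conditions there: it transfers local maxima and local minima of $f$ to the convex least-squares objective $h(\beta)=(2n)^{-1}\|X\beta-y\|^2$ by observing that the Hadamard map sends an open neighborhood of $(\tilde g,\tilde l)$ onto an open neighborhood of $\tilde\beta=\tilde g\circ\tilde l$, and then invokes convexity of $h$. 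You instead stay in $(g,l)$-space: stationary points split into case (i) ($u=0$, the normal equation) and case (ii), the surjectivity $\beta=\beta\circ\mathbf 1$ gives $\inf f=\min_\beta h$ so case (i) points are exactly the global minima (yielding the ``iff''), and case (ii) is excluded for local minima because your Hessian formula already exhibits a strictly negative direction there. Your route buys a uniform treatment (one Hessian identity drives both the strict-saddle claim and the local-min classification) and avoids having to verify openness of the Hadamard image; the paper's route buys a shorter argument for the no-local-max claim by outsourcing it to convexity. The only soft spot in your write-up is the closing sentence for local maxima: showing that an NSD Hessian forces $u=0$ places the point among the global minima, but to conclude it is not a (non-strict) local maximum one still needs that $f$ is not locally constant there — which is where an open-image argument like the paper's (plus $X\neq 0$) finishes the job; the paper glosses over the same degenerate possibility, so this is a one-line patch rather than a genuine gap.
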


\noindent According to the first order condition associated with $f(g,l)$
\begin{align*}
g\circ \big[X^T\big(X(g\circ l)-y\big)\big]=l\circ \big[X^T\big(X(g\circ l)-y\big)\big] = 0,
\end{align*}
there could be exponentially many (at least $2^p-1$) saddle points as a solution to this equation, for example, for those $(g,l)$ satisfying
\begin{align*}
g_A=l_A =0\in\mathbb R^{|A|}, \qquad\mbox{and}\qquad \big[X^T\big(X(g\circ l)-y\big)\big]_{A^c} = 0\in \mathbb R^{p-|A|},
\end{align*}
for any non-empty subset $A$ of $\{1,\ldots,p\}$. Consequently, the gradient descent algorithm may converge to any of these bad saddle points. To see this, if we initialize $(g,l)$ in a way such that the components in the index set $A$ are zero, or $[g_0]_A=[l_0]_A=0$, then these components will remain zero forever in the gradient iterations, or $[g_t]_A=[l_t]_A=0$ for all $t>0$. Fortunately, the following result implies that as long as we use a random initialization for $(g,l)$ with continuous pdf over $\mathbb R^{2p}$ as in Algorithm~\ref{alg1}, then the gradient descent almost surely converges to a global minimum.

\begin{lem}\label{Lem:GD_global_min}
Suppose the step size $\eta$ is sufficiently small. Then with probability one, Algorithm~\ref{alg1} converges to a global minimum of $f(g,l)$.
\end{lem}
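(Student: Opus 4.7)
The plan is to invoke the now-standard \emph{strict-saddle avoidance} theorem for gradient descent (as in Lee, Simchowitz, Jordan, Recht and subsequent refinements) and combine it with the landscape characterization already supplied by Lemma~\ref{Lem:global_min}. Concretely, I view the gradient descent update in Algorithm~\ref{alg1} as iterating the map $\Phi(g,l) = (g,l) - \eta\,\nabla f(g,l)$ on $\mathbb R^{2p}$. Because $f$ is a $C^\infty$ polynomial, $\Phi$ is smooth; choosing $\eta$ smaller than $1/L$ on any bounded invariant region (where $L$ bounds the local Lipschitz constant of $\nabla f$) makes $\Phi$ a local diffeomorphism, since its Jacobian $I - \eta\,\nabla^2 f$ is then non-singular. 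This is precisely the regularity hypothesis required by the stable-manifold based arguments.

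Next I would partition the set of stationary points $\mathcal C$ of $f$ into global minima $\mathcal C_{\min}$ and saddles $\mathcal C_{\mathrm{sad}}$; Lemma~\ref{Lem:global_min} guarantees that $f$ has no local maximum, every local minimum is global, and every saddle is \emph{strict}, meaning $\lambda_{\min}(\nabla^2 f)<0$ there. The strict-saddle theorem then tells us that for each strict saddle $(g^\dagger,l^\dagger)$, the set of initializations from which the iterates of $\Phi$ converge to $(g^\dagger,l^\dagger)$ is contained in a (center-)stable manifold of strictly positive codimension, hence has Lebesgue measure zero in $\mathbb R^{2p}$. Because $\mathcal C_{\mathrm{sad}}$ is an algebraic set (solution set of the polynomial system $g\circ[X^T(X(g\circ l)-y)]=l\circ[X^T(X(g\circ l)-y)]=0$), it is a countable union of smooth pieces, so the union of the stable manifolds over $\mathcal C_{\mathrm{sad}}$ remains Lebesgue-null. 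Since the initialization in Algorithm~\ref{alg1} has a continuous (uniform) density on $[-\alpha,\alpha]^{2p}$, this null set is hit with probability zero.

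To convert ``does not converge to a strict saddle'' into ``converges to a global minimum,'' I still need (i) that the iterates $\{(g_t,l_t)\}$ actually converge, and (ii) that the limit lies in $\mathcal C$. For (ii), the standard descent-lemma estimate $f(g_{t+1},l_{t+1})\le f(g_t,l_t)-\tfrac{\eta}{2}\|\nabla f(g_t,l_t)\|^2$ (valid once $\eta$ is less than the reciprocal Lipschitz constant of $\nabla f$ on the sublevel set $\{f\le f(g_0,l_0)\}$) gives $\sum_t\|\nabla f\|^2<\infty$, so every limit point of the iterates is stationary. For (i), I would show the iterates are bounded and then invoke the Łojasiewicz inequality for the real-analytic function $f$, which forces single-limit convergence of any bounded gradient-descent sequence on a real-analytic loss; boundedness follows because $f$ is coercive on any set where, say, the signs of $g_t,l_t$ stabilize, and a short argument using monotone decrease of $f$ confines the iterates to a compact sublevel-set-like region. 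Combining (i), (ii), and the strict-saddle avoidance from the previous paragraph gives that almost surely the unique limit lies in $\mathcal C\setminus \mathcal C_{\mathrm{sad}}=\mathcal C_{\min}$, which is the claim.

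The main obstacle I anticipate is the boundedness/convergence step (i): $f$ is not globally coercive on $\mathbb R^{2p}$ (one can send $g\to\infty$, $l\to 0$ along $g\circ l=\mathrm{const}$ without changing $f$), so coercivity cannot be invoked directly. The clean way around this is to exploit the conserved quantity $g_t^2-l_t^2$ of the continuous-time flow (approximately preserved in the small-$\eta$ discrete iteration) together with the chosen small initialization $\alpha$, which pins $|g_t^2-l_t^2|$ uniformly small and therefore keeps $\|g_t\|$ and $\|l_t\|$ of the same order, yielding the needed a~priori bound; once boundedness is in hand, the Łojasiewicz-based convergence and strict-saddle avoidance finish the proof.
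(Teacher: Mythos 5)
Your proposal is essentially the paper's own argument: the paper proves this lemma in one line by combining Lemma~\ref{Lem:global_min} (no local maxima, all local minima global, all saddles strict) with Theorem~4 of \cite{lee2016gradient}, i.e.\ exactly the strict-saddle-avoidance / stable-manifold theorem you invoke, with the random continuous initialization ensuring the measure-zero stable sets of saddles are avoided almost surely. Your extra steps on single-point convergence and boundedness (descent lemma, Łojasiewicz, the approximately conserved quantity $g_t^2-l_t^2$) go beyond what the paper does---it simply reads convergence to a minimizer off the cited theorem---so they are not needed to match the paper's proof, though if you do pursue them note that near-conservation of $g_t^2-l_t^2$ alone does not preclude $g_t$ and $l_t$ diverging together (e.g.\ with $\beta_t=g_t\circ l_t$ drifting along the null space of $X$), so that boundedness sketch would require a different argument.
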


In the low-dimensional regime where the design matrix $X$ has full column rank, the solution $\bar \beta$ to the normal equation $X^T(X\beta-y) = 0$ is unique, which is also the least squares estimator. Under this scenario, Lemma~\ref{Lem:global_min} and Lemma~\ref{Lem:GD_global_min} together certify that Algorithm~\ref{alg1} will converge to this optimal least squares estimator.  However, in the high-dimensional regime which is the main focus in the paper, the normal equation $X^T(X\beta-y) = 0$ have infinitely many solutions, and it is not clear which solution the algorithm tends to converge to. For example, if we consider instead applying the gradient descent to the original parameter $\beta$ in the objective function $(2n)^{-1}\|X\beta-y\|^2$ with initialization $\beta_0=0$, then the iterates will converge to the minimal $\ell_2$-norm solution of the normal equation (see below for details). Interestingly, as we will illustrate in the following, under the Hadamard parametrization the gradient descent Algorithm~\ref{alg1} now tends to converge to the minimal $\ell_1$-norm solution under certain conditions for initialization, thereby inducing sparsity and naturally facilitating variable selection.

\subsection{Gradient descent converges to sparse solution}\label{Sec:Heuristic}
In this subsection, we provide two different perspectives for understanding the following informal statement on the behavior of simple gradient descent for the loss function $f(g,\,l)$ defined in~\eqref{eq_opt} under the Hadamard product parameterization $\beta =g\circ l$. For simplicity, we assume that the response $y$ in the linear model is perfect, that is, the noise variance $\sigma^2=0$, throughout this subsection.
 Then in the next subsection, we turn to general noisy observations, and propose methods that lead to optimal estimation when the true regression coefficient $\beta^\ast$ is sparse. 

\paragraph{Informal Statement:} \emph{If we initialize the algorithm to be arbitrarily close to $g=l=0$, then under suitable conditions on design $X$, a simple gradient descent converges to a solution of basis pursuit problem:
	\begin{equation}\label{bs}
	\min_{\beta\in\mathbb R^p}{\|\beta\|_1}  \quad \mbox{subject to}  \quad X\beta =y.
	\end{equation}}
Our first perspective is based on the $\ell_2$-norm implicit regularization in linear system, and the second is based on analyzing the gradient dynamical system as the limit of the gradient descent algorithm as the step size $\eta\to 0_{+}$. However, both perspectives in this section are heuristic, and formal statements and proofs (based on a different strategy) will be provided in Section~\ref{Sec:Theory}.

\paragraph{$\ell_2$-norm implicit regularization perspective:} Consider the under-determined system $X\beta=y$, where $X\in\mathbb R^{n\times p}$ has full row rank.
Our first intuition comes from the fact that a zero-initialized gradient descent algorithm over $\beta\in\mathbb R^p$ for solving 
\begin{align*}
\min_{\beta\in\mathbb R^p} \frac{1}{2n}\,\|X\beta - y\|^2:\,= g(\beta)
\end{align*}
finds a minimal $\ell_2$-norm solution to $X\beta=y$. 

In fact, we know that any solution to the linear system $X\beta=y$ takes the form of
\begin{align*}
\beta = X^{+} y + [I - X^+X]w,\quad \mbox{over all }w\in\mathbb R^p,
\end{align*}
where $X^+=\lim_{\lambda\to 0_+}(X^TX+\lambda\, I)^{-1}X^T$ is the Moore-Penrose inverse of $X$. Since $X(I-X^+X)=0$, we have 
\begin{align*}
\|\beta\|^2 = \|X^{+} y\|^2 + \| [I - X^+X]w\|^2 \geq \|X^{+} y\|^2,
\end{align*}
implying that $X^{+}y$ is the unique solution of $X\beta=y$ in the column space of $X^T$, which is also the minimal $\ell_2$-norm solution. 
Now since the gradient updating formula for $\beta$, $\beta_{t+1}=\beta_t-\eta X^T(X\beta_t-y)/n$, implies that the difference $\beta_t-\beta_0$ always lies in the column span of $X^T$. Let $\beta_\infty:\,=\lim_{t\to\infty}\beta_t$ be the limiting point of the gradient algorithm. Then $\beta_\infty$ must be a solution to $X\beta=y$. On the other hand , when $\beta_0$ is initialized at zero, $\beta_\infty$ should also belong to the column span of $X^T$. These two properties combined imply that $\beta_\infty$ must be the minimal $\ell_2$-norm solution $X^{+}y$.

In high dimensional linear regression with perfect observations, a popular class of penalization methods attempts find the minimal $\ell_1$-norm solution to $X\beta=y$ .  
Under the Hadamard product parameterization $\beta =g\circ l$, the fact that gradient descent tends to find the  minimal $\ell_2$-norm solution suggests (this is not rigorous) that the gradient descent algorithm for jointly minimizing $f(g,\,l)$ over $(g,\,l)$ tends to converge to a solution to $X(g\circ l) =y$ with a minimal $\ell_2$-norm $\sqrt{\|g\|^2+\|l\|^2}$. However, a minimal $\ell_2$-norm solution to $X(g\circ l) =y$ must satisfy $|g_j|=|l_j|$ for each $j=1,\ldots,p$ (otherwise we can always construct another solution with strictly smaller $\ell_2$-norm), which implies $\sqrt{\|g\|^2+\|l\|^2} = \sqrt{2}\, \|g\circ l\|_1=\sqrt{2}\,\|\beta\|_1$. As a consequence, $\beta_{\infty}=g_{\infty}\circ l_{\infty}$ should be the minimal $\ell_1$-norm solution to $X\beta = y$.

Another way to understand the difference in the evolutions of gradient descents for $f(g,l)$ and $h(\beta)$ is by noticing that the gradient $\nabla_{g_j} f(g,l) = l_j\cdot \nabla_{\beta_j} h(\beta)\big|_{\beta =g\circ l}$ in the new parametrization, for each $j=1,\ldots,p$, has an extra multiplicative factor of $l_j$ than the gradient $\nabla_{\beta_j} h(\beta)$ in the usual least squares of minimizing $g(\beta)$. It is precisely this extra multiplicative factor $l_j$ that helps select important signals (nonzero regression coefficients) and prevent unimportant signals (zero regression coefficients) to grow too fast at the early stage of the evolution when both $g$ and $l$ are close to zero. Precisely, as we will show in our theory part (Section~\ref{Sec:Theory}), under suitable conditions on the model, all unimportant signals remain to stay in a $\mathcal{O}(p^{-1})$ neighbourhood of zero, while important ones tend to grow exponentially fast.

\paragraph{Gradient dynamical system perspective:} Our second perspective comes from considering the limiting gradient dynamical system of the problem (i.e.~gradient descent with an infinitesimally small step size), which is motivated by the interpretation for matrix factorization problems in~\cite{gunasekar2017implicit} and \cite{pmlr-v80-gunasekar18a}. In particular, the behavior of this limiting dynamical system is captured by the ordinary differential equations
\begin{equation}\label{de}
\begin{cases} 
\ \dot{g}(t)=-\big[ X^{T}r(t)\big] \circ l(t),\\ 
\ \, \dot{l}(t)=-\big[X^{T}r(t)\big] \circ g(t),
\end{cases}\quad\mbox{with initialization}\quad 
\begin{cases} 
\ g(0)=\alpha\mathbf 1,\\ 
\ \, l(0)= 0,
\end{cases}
\end{equation}
where $r(t)=n^{-1}\big[X(g(t)\circ l(t))-y\big]\in\mathbb R^p$, and for simplicity we  fixed the initialization. To emphasize the dependence of the solution on $\alpha$, we instead write $g(t),\,l(t),\,r(t)$ as $g(t,\alpha), \,l(t,\alpha),\,r(t,\alpha)$.
For illustration purposes, we assume that the limiting point of this system is continuous and bounded as the initialization value $\alpha\to 0_+$, that is, both limits $g_\infty=\lim_{t\to\infty, \alpha\to 0_+}g(t,\alpha)$ and $l_\infty=\lim_{t\to\infty, \alpha\to 0_+}l(t,\alpha)$ exist in $\mathbb R^p$ and are finite. 

Let $s(t,\alpha)=\int_0^t r(\tau,\alpha) d\tau\in\mathbb R^p$, then simple calculation leads to the relation
\begin{equation*}
\left[ \begin{array}{c} g_j(t,\alpha)+l_j(t,\alpha) \\[0.3em] g_j(t,\alpha)-l_j(t,\alpha) \end{array} \right] 
= \alpha\,\left[ \begin{array}{c}\exp(-X_j^{T}s(t,\alpha)) \\[0.3em]  \exp(X_j^{T}s(t,\alpha)) \end{array} \right],\quad\mbox{for each }j=1,\ldots,p.
\end{equation*}
Under the aforementioned assumption on the existence of limits as $t\to\infty$ and $\alpha\to 0_+$, the preceding display implies one of the following three situations for each $j$:
\begin{enumerate}
	\item[Case 1:] $g_{j,\infty}=l_{j,\infty}\neq 0$, and 
	$\displaystyle \lim_{t\to\infty,\alpha\to 0_+}X_j^{T}s(t,\alpha)/\log(1/\alpha)=  1$.
	\item[Case 2:] $g_{j,\infty}=-l_{j,\infty}\neq 0$, and 
	$\displaystyle \lim_{t\to\infty,\alpha\to 0_+}X_j^{T} s(t,\alpha)/\log(1/\alpha)=  -1$.
	\item[Case 3:] $g_{j,\infty}=l_{j,\infty}=0$, and 
	$\displaystyle \lim_{t\to\infty,\alpha\to 0_+}X_j^{T} s(t,\alpha)/\log(1/\alpha) =\gamma_j\in[-1,1]$.
\end{enumerate}
Denote $s_\infty$ as the limit $\lim_{t\to\infty,\alpha\to 0_+} s(t,\alpha)/\log(1/\alpha)$. Recall $\beta_\infty = g_\infty\circ l_\infty$, and the previous three cases can be unified into
\begin{equation*}
X_j^{T}s_\infty=
\begin{cases} 
\mbox{sign}(\beta_{j,\infty}), & \mbox{if}\  \beta_{j,\infty}\neq 0, \\ 
\gamma_j\in [-1,1], &  \mbox{if}\ \beta_{j,\infty}= 0,
\end{cases}\quad\mbox{for each }j=1,\ldots,p.
\end{equation*}
This identity together with the limiting point condition $X\beta_{\infty}=y$
coincides with the KKT condition for the basis pursuit problem~\eqref{bs}.


Again, this argument is based on the hard-to-check solution continuity assumption. In the next section, we provide a formal proof of the result without making this assumption, albeit under a somewhat strong RIP condition on $X$. We conjecture this gradient descent implicit regularization property to be generally true for a much wider class of design matrices (see our simulation section for numerical evidences).

\subsection{Gradient descent with early stopping}
In this subsection, we consider the general case where the response $y$ contains noise, or $\sigma^2\neq 0$. In particular, we propose the use of early stopping, a widely used implicit regularization technique \citep{zhang2005boosting,raskutti2014early}, to the gradient descent Algorithm~\ref{alg1}. As the name suggests, we will use certain criterion to decide whether to terminate the gradient descent updating to prevent overfitting of the data.  Algorithm~\ref{alg2} below summarizes a particular stopping criterion widely-used in the machine learning community via validation. In principal, we can also treat the number of iterations as a hyperparameter and repeat this procedure multiple times  in same spirits as data splitting and cross validation.

\smallskip

\begin{algorithm}[H]
	\KwData{Training design matrix $X\in\mathbb R^{n\times p}$,\, measurement vector $y\in\mathbb R^n$, validation data $X'$, $y'$, initialization magnitude $\alpha$, step size $\eta$, and maximal number of iterations $T_{max}$;}
	Initialize variables $[g_0]_j\overset{iid}{\sim}\mbox{Unif}(-\alpha,\alpha)$, $[l_0]_j\overset{iid}{\sim}\mbox{Unif}(-\alpha,\alpha)$ for $j=1,\ldots,p$, and iteration number $t=0$;\\
	\While{$t<T_{max}$}{
		${g}_{t+1}=g_t-\eta \ l_t \circ \big[n^{-1}\,X^{T}\big(X(g_t\circ l_t)-y\big)\big]$;\\ 
		$\, {l}_{t+1}=l_t-\eta \ g_t \circ \big[n^{-1}\,X^{T}\big(X(g_t\circ l_t)-y\big)\big]$; \\[0.2em]
		$\, t=t+1$;\\
	}
	\KwResult{Choose the first ${\tilde t}$ such that $\|X'(g_{{\tilde t}}\circ l_{{\tilde t}})-y'\|> \|X'(g_{{\tilde t}+1}\circ l_{{\tilde t}+1})-y'\|$ or $\|X'(g_{{\tilde t}}\circ l_{{\tilde t}})-y'\|$ is minimized over all iterations, then output the final estimate $\widehat \beta=g_{{\tilde t}}\circ l_{{\tilde t}}$.
	}\label{alg2}
	\caption{Gradient Descent for Linear Regression with Validation Data} 
\end{algorithm} 

\smallskip

Recall that in the introduction, we discussed about the saturation issue suffered by explicit penalized methods such as the Lasso. Now we turn to our method and illustrate that it is unaffected, or at least less suffered from the saturation issue. In the next section, we will provide rigorous theory showing that our method can achieve a faster $\sqrt{s/n}$ rate of convergence then the typical $\sqrt{s\log p/n}$ rate when all nonzero signals are relatively large.

Due to the connection of our method with the basis pursuit problem~\eqref{bs}, one may naturally think that our method in the noisy case should be equivalent to a basis pursuit denoising problem:
\begin{equation}\label{bsdn}
\min \|\beta\|_1  \quad \mbox{subject to} \quad \|X \beta -y\|_2 \leq \epsilon,
\end{equation}
with some error tolerance level $\varepsilon$ depending on the stopping criterion, and therefore is equivalent to the Lasso. Surprisingly, a simulation example below shows that the iterate path of the gradient descent Algorithm~\ref{alg1} contains estimates with much smaller error than the Lasso. Precisely, we adopt the simulation setting S2 in section~\ref{sec:simu} . As comparisons, we also report the Lasso solution path (as a function of the regularization parameter $\lambda$) solved by ISTA and FISTA \citep{beck2009fast}. For our gradient descent algorithm, we set $\alpha = 10^{-5}$ in the random initialization. From figure~\ref{fig:31}, when the iteration number is around $1000$, even though the prediction error in panel~(c) of our algorithm and the Lasso (with an optimally tuned $\lambda$, see panel~(b) for the entire Lasso solution path), the estimation error in panel~(a) of our method is significantly lower than that of the Lasso, illustrating the occurrence of the saturation phenomenon of the Lasso. Moreover, the stabilized region (iterations $200$--$1000$) of our method GD in panel~(a) is relatively wide, and therefore the performance tends to be robust to the stopping criterion.

\begin{figure}[H]
	\begin{subfigure}{0.32\textwidth}
		\includegraphics[width=\linewidth]{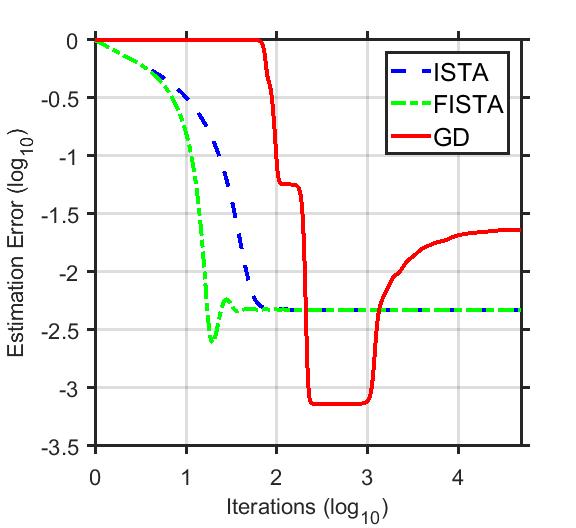}
		\caption{Estimation error vs Iteration}
	\end{subfigure}
	\begin{subfigure}{0.32\textwidth}
		\includegraphics[width=\linewidth]{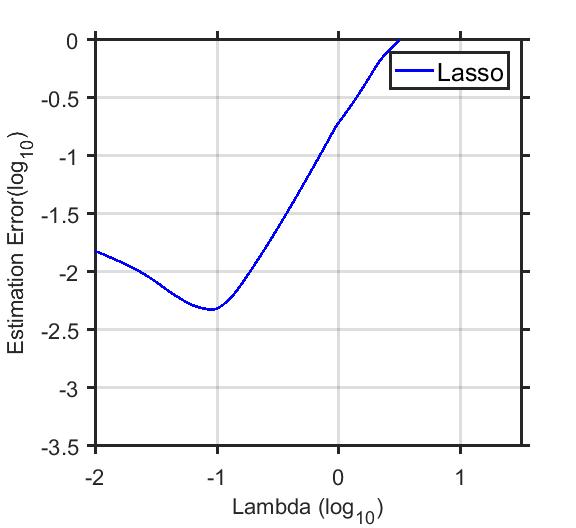}
		\caption{Estimation error vs Regularization for Lasso}
	\end{subfigure} 
	\begin{subfigure}{0.32\textwidth}
		\includegraphics[width=\linewidth]{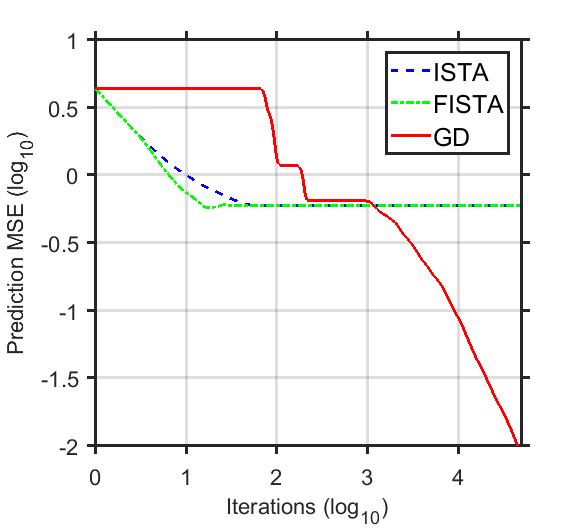}
		\caption{Prediction error vs Iteration}
		\label{fig:32}
	\end{subfigure} 
	\caption{Panel (a) is a log-log plot of standardized estimation error $\|\widehat \beta-\beta^\ast\|^2_2/\|\beta^\ast\|^2_2$ versus iteration number $t$. Panel (b) is a log-log plot of standardized estimation error versus regularization parameter $\lambda$ for Lasso. Panel (c) is a log-log plot of mean prediction error $\sqrt{\|\widehat y-y\|^2_2/n}$ versus iteration number $t$.}\label{fig:31}
	\vspace{-0.7em}
\end{figure}


Next, let us briefly illustrate why implicit regularization with early stopping works, while explicit regularized methods may fail.
We know that early stopping, serving as algorithmic regularization, is based on the intuition that as the iteration number grows, the bias keeps decreasing while the variance increasing. Consequently, the iteration number $T$, acting as an implicit regularization parameter, aims to optimally balance between the bias and the variance, akin to the bias-variance trade-off. In our parametrization, the iteration number $T$ controls the $\ell_1$ norm of the solution, reflecting the model space complexity. To see this, we plot the $\ell_1$ norm versus the iteration number, and also the estimation errors versus the $\ell_1$ norm, all in logarithmic scales, in figure~\ref{fig:33}. As we expected, as the number of iterations increases, the $\ell_1$ norm of the iterate also increases. When the logarithm of the iteration number is within $(2.3,3)$, the $\ell_1$ norm of the estimated coefficients tends to be stabilized at the $\ell_1$ norm of the true $\beta^\ast$ as $0.9$, corresponding to the most accurate estimation region in panel~(a) of figure~\ref{fig:31}. In contrast, as we can see from panel~(b) of figure~\ref{fig:33}, the estimation error is very sensitive in the regularization parameter domain --- the region corresponds to smallest estimation accuracy is very narrow, and a small change in the $\ell_1$ norm in the solution leads to a drastic deterioration in the estimation accuracy. This numerical example provides evidences of why explicit regularized approaches may suffer from large bias and low accuracy.

\begin{figure}[t]
	\begin{subfigure}{0.48\textwidth}
		\includegraphics[width=\linewidth]{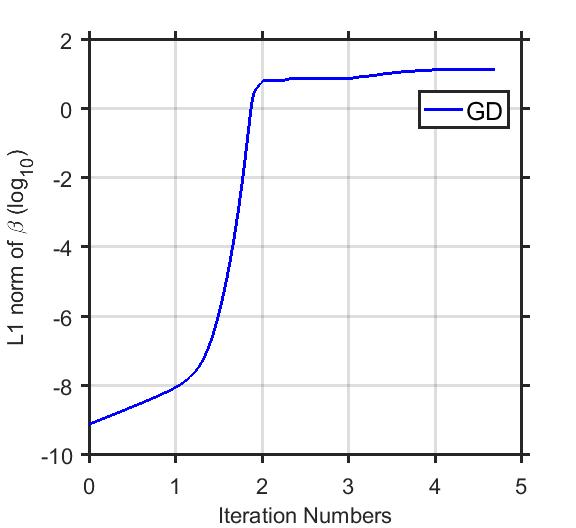}
		\caption{$\ell_1$ norm vs Iteration}
	\end{subfigure}
	\begin{subfigure}{0.48\textwidth}
		\includegraphics[width=\linewidth]{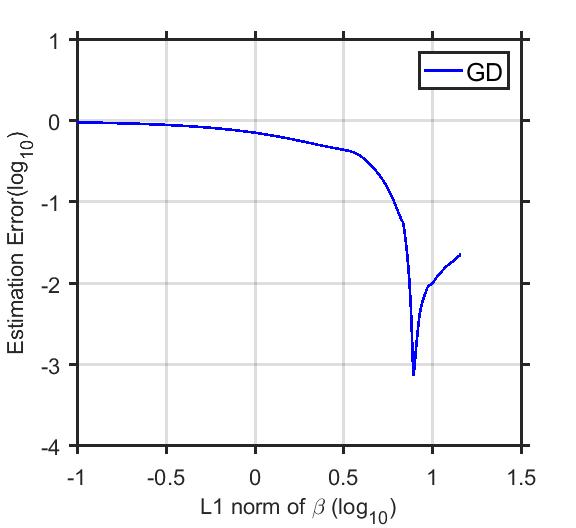}
		\caption{Prediction error vs Iteration Comparison}
	\end{subfigure} 
	\caption{Panel (a) is a log-log plot of $\ell_1$ norm of the estimated coefficients versus iteration number $t$. Panel (b) is a log-log plot of standardized estimation error versus $\ell_1$ norm of the estimated coefficients.} \label{fig:33}
	\vspace{-0.7em}
\end{figure}

Finally, we discuss several commonly used early stopping rules by treating the iteration number as a tuning parameter.

\paragraph{Hold-out or cross validation:} The simplest method is to use hold-out data as validation: for example, we can split the training data into half-half, and then run gradient descent on the first half $D_1$ of the data while calculate the prediction error $R(t)=\sum_{i\in D_2}({X^{(i)}}^T (g_t\circ l_t)-y_i)^2$ for all $t  \leq T_{max} $ on the second half $D_2$, then the final iteration number is decided by (cf.~Algorithm~\ref{alg2}):
	\begin{equation}\label{cv}
	\tilde t : = \arg \min \{ t \leq T_{max}  \,|\, R ( t+1 ) > R (t) \} \quad \mbox{or}
	\end{equation} 
	\begin{equation}\label{cv2}
	\tilde t : = \arg \min \{ t \leq T_{max} \,|\, R ( t ) = \min_{\tau \leq T_{max}} R (\tau) \}. \quad 
	\end{equation} 
	To make use of the whole dataset, we can perform cross validation: first split data into $K$ folds, then apply gradient descent on all possible combinations of $K-1$ folds without replacement and evaluate at the corresponding rest $1$ folds. The final risk $R(t)$ can be the sum of all the evaluations on each fold, and the criterion~\eqref{cv} or~\eqref{cv2} can be used to obtain the iteration number. Finally we can apply the same iteration number obtained from cross validation to approximate the optimal one for the entire training data.
\paragraph{Stein's unbiased risk estimate (SURE):} \cite{stein1981estimation} suggested the use of degrees of freedom as surrogate to the true prediction error given the standard derivation $\sigma$. Under our settings, ignoring the second order term of step size $\eta$, the updating of the prediction error (up to rescaling) $r_t=n^{-1}\big[X(g_t\circ l_t) - y\big]\in\mathbb R^n$ through gradient descent can be approximated by (by ignoring second order terms of order $\eta^2$):
	\begin{equation}
	r_{t+1}\approx [I - 2 \eta n^{-1} X \mbox{diag}(|g_t\circ l_t|) X^T] \,r_t, 
	\end{equation} 
	where for a vector $u$, diag$(u)$ denotes the diagonal matrix with components of $u$ in the its diagonals. Define $S_t= \Pi_{i=1}^{t-1} (I - 2 \eta n^{-1} X \mbox{diag}(|g_t\circ l_t|) X^T)$, then the estimated degrees of freedom at time $t$ can be approximated by $n-\mbox{trace}(S_t)$. Consequently, the risk based on the $C_p$-type statistic \citep{efron2004estimation} is
	\begin{equation}
	R(t) = \frac { \| r_t \| ^ { 2 } } { n } + \Big(2-\frac { 2 \mbox{trace}(S_t) } { n } \Big)\sigma ^ { 2 }.
	\end{equation} 
	The total iteration number as a tunign parameter can then be selected by minimizing $R(t)$ in equation~\eqref{cv} or~\eqref{cv2} . In practice, we can use the plug-in estimator $\hat{\sigma}$ to replace the unknown $\sigma$ in $R(t)$. According to our simulation studies (for example, see figure~\ref{fig:es}), early stopping based on SURE generally works not as good as the hold-out or cross validation method.
	
\paragraph{Measure of model complexity:}  \citep{raskutti2014early} proposed an early stopping rule based on  local empirical Rademacher complexity of the Reproducing kernel Hilbert space. However, their method can not be directly applied in our case: their stopping rule is based on the eigenvalues of empirical kernel matrix, which is $ n^{-1} X \mbox{diag}(|g_t\circ l_t|)  X^T$ in our settings. Since our empirical kernel matrix keeps updated throughout the iterations, their method is not directly applicable.
	
	\smallskip 
	In the end of this subsection, we adopt the simulation framework S1-S4 (only change the standard derivation to $\sigma=0.1$) in section~\ref{sec:simu} to compare different early stopping criteria. We record the mean estimation errors averaging over $50$ trials  and report the errors in figure~\ref{fig:es}. From the figure, we can see that cross-validation tends to be more robust than SURE. 	
	Therefore, we recommend using hold-out or cross validation to select the iteration number, and will stick to this method in the rest of the paper.

	\begin{figure}[H]
		\centering
			\includegraphics[scale=0.7]{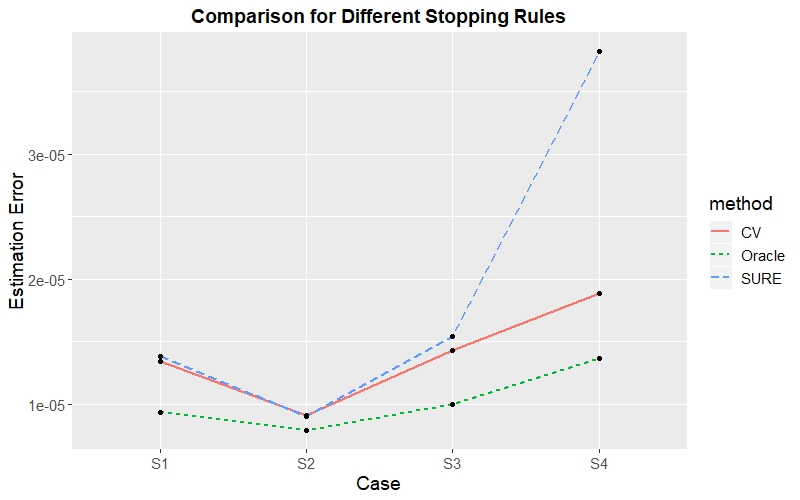}
		\caption{Comparison of the estimation errors for different early stopping rules, `Oracle' stands for the optimal early stopping rule with knowldege on the truth. 'CV' stand for early stopping through $5$ fold cross validation.} \label{fig:es}
		\vspace{-0.7em}
	\end{figure}

\subsection{Adaptive step size and variable selection} \label{sec:2.5}

A nature extension of gradient descent algorithm~\ref{alg1} is to assign different weights (step sizes) to different coordinates of $\beta$, which is related to the adaptive Lasso \citep{zou2006adaptive}. It can be seen from the differential equation interpretation: by inserting a constant weighting matrix $D(\Omega)=\mbox{diag}(\omega_1,...,\omega_p)$ into the equation~\eqref{de}, we obtain the limiting dynamical system as
\begin{equation*}
\begin{cases} 
\ \dot{g}(t)=-\big[D(\Omega) X^{T}r(t)\big] \circ l(t),\\ 
\ \, \dot{l}(t)=-\big[D(\Omega)X^{T}r(t)\big] \circ g(t).
\end{cases}
\end{equation*}
Based on similar heuristic analysis as in Section~\ref{Sec:Heuristic} for the noiseless case, the limiting point of the dynamic system satisfies:
\begin{equation*}
X_j^{T}s_\infty=
\begin{cases} 
\mbox{sign}(\beta_{j,\infty}) /\omega_j, & \mbox{if}\  \beta_{j,\infty}\neq 0, \\ 
\gamma_j\in [-\frac{1}{\omega_j},\frac{1}{\omega_j}], &  \mbox{if}\ \beta_{j,\infty}= 0,
\end{cases}\quad\mbox{for each }j=1,\ldots,p.
\end{equation*}
which is the KKT condition for the dual form of the adaptive Lasso 
\begin{align*}
\min_{\beta\in\mathbb R^p}\sum_{j=1}^p\frac{|\beta_j|}{w_j} \qquad\mbox{subject to } X\beta =y.
\end{align*}
In the limiting case when the step size $\omega_j$ of a particular component $\beta_j$ tends to $0$, we are equivalently adding an $+\infty$ when $\beta_j\neq 0$. In contrast, if we apply a larger step size $\omega_j$ to $\beta_j$, then $\beta_j=g_j\circ l_j$ tend to move faster and more freely in the parameter space, which is equivalent to a smaller penalty on $\beta_j$.
The original paper in \cite{zou2006adaptive} constructed the weights based on the ordinary least square solution, which requires $n\geq p$. In practice when $p>n$, we can construct weights through a preprocessing step. For example, variable screening can be applied to introduce sparse weights.

To enable variable selection in our method, we can perform a component-wise hard thresholding operation to the final estimator $\hat{\beta}=g_{{\tilde t}}\circ l_{{\tilde t}}$. Based on our theoretical analysis, since our method tries to shrink both weak signals and errors into very small order $p^{-2}$, it is more robust to false detections than other explicit regularizations when the same tuning parameter for noise level is applied. Let us consider a simple example to illustrate the basic idea: we set $n=10$, $p=20$, $X_{ij} \overset{iid}{\sim} \mathbb{ N }(0,1)$ for $i=1,2,...,n$ and $j=1,2,...,p$,  $\beta^*_1=0.5 \sigma\sqrt{\log p/n}$, $\beta^*_2=5 \sigma \sqrt{\log p/n}$, and all other components are zeros in the data generating model $y=X\beta^* +w$ with $w \sim \mathbb{ N }(0,I)$.  Since the strength of the first components of truth is weak, it is hard to be detected by all methods we have tried. However, the effect of the weak signals on $y$ still pertains. In particular, when applying the cross validation, traditional penalized methods tends to over-select the predictors, leading a lot of false discoveries.  In comparison, due to the implicit regularization our method tend to be more robust to the weak signals---our estimate is typically non-sparse, the effect of the non-detected weak signals can be distributed to all components of the estimated vector, and no component is particularly spiked. As a consequence, our method tends to be more robust to false discoveries after applying the hard thresholding. The variable selection results are shown in figure~\ref{fi:vsl}. As we can see, the Lasso can not detect the weak signal, and two components, indexed by $6,19$, appear to be false detected through cross validation (note that in Lasso, a soft thresholding has already been applied). In contrast, in our method most unimportant signals remains small. Performing a hard thresholding with the same regularization parameter selected by the Lasso can erase all false detections, leading to the selection of strong signal only.  

\begin{figure}[t]
	\includegraphics[width=\linewidth]{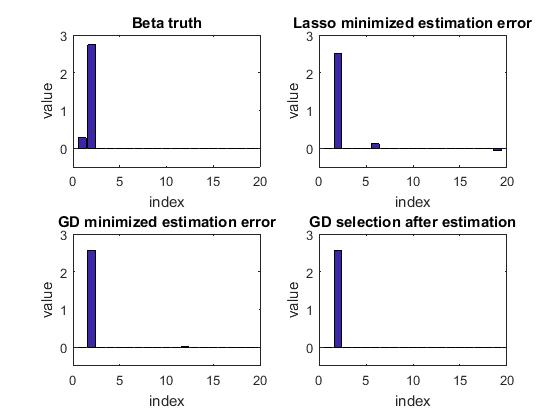}
	\caption{The values versus index for truth $\beta^*$, $\beta$ estimator through lasso by minimizing the cross validation error in prediction, $\beta$ estimator through gradient descent by minimizing the cross validation error in prediction and $\beta$ estimator through `post estimation' selection for gradient descent.} \label{fi:vsl}
	\vspace{-0.7em}
\end{figure}

\subsection{Related literatures}
\cite{li2018algorithmic} studies the theory for implicit regularization in matrix sensing, which requires the data to be perfect measured and has different geometric structures as linear regression. \cite{hoff2017lasso} considers the Hadamard product parametrization to optimize the parameters in high-dimensional linear regression. However, his method is computational-wise in order to reformulate the non-smooth Lasso optimization problem into a smooth one.
In particular, their objective function involves an $\ell_2$ penalty on $(g,l)$ (which is equivalent to the $\ell_1$ penalty on $\beta$), and the solution is precisely the Lasso solution.

\section{Theoretical Analysis}\label{Sec:Theory}
In this section, we provide formal statements for characterizing the behavior of the gradient descent algorithm for minimizing the Hadamard product parametrized quadratic loss $f(g,l)$ as defined in \eqref{eq_opt}. We start with a description of our assumptions. Then we turn to the case of non-negative parameters, where a simpler parametrization $\beta=u\circ u$ can be applied, as a warm-up to convey the main proof ideas. Finally, we consider the general signal case and illustrate when the fast parametric root-$n$ rate independent of the dimension can be achieved. All proofs are deferred to the appendix in the supplementary material of the paper.

\subsection{Assumptions}
Recall that the underlying true data generating model is $y=X\beta^*+w$ with $w \sim \mathcal{N}(0,\sigma^2 I)$, where the true parameter $\beta^\ast$ is $s$-sparse.
Within the $s$ nonzero signal components of $\beta^\ast$, we define the index set of strong signals as $S_1=\{i\in S: |\beta^*_i| \geq  2\sigma \log p \sqrt{\frac{\log p}{n}}\}$ and weak signals as $S_2=\{i\in S: |\beta^*_i| \leq  2\sigma \sqrt{\frac{\log p}{n}}\}$, where $|S_1|=s_1$, $|S_2|=s_2$. According to the information-theoretic limits from \cite{wainwright2009information}, weak signals of order $\sigma \sqrt{\log p/n}$ in sparse linear regression are generally impossible to be jointly recovered or selected (but can be detected in terms of the type I/II error control in hypothesis testings, e.g.~see \cite{jin2016rare}). Therefore, our primary focus would be the estimation and selection of strong signals.
We use the notation $\theta_{s_1}(\beta)$ to denote the $s_1$-th largest absolute component value of $\beta$, and let $m=\theta_{s_1}(\beta^\ast)$, which reflects the minimal strength for strong signals. We also use $\kappa$ to denote the strong signal-condition number as the ratio between the largest absolute signal value to the smallest strong signal. 
We will also make use of the notation of Restricted Isometry Property (RIP, \cite{candes2008restricted}), which is a commonly used assumption (e.g. \cite{candes2007dantzig}) in the high dimensional linear regression literatures.

\begin{defin}[Restricted Isometry Property]
	A matrix $X\in \mathbb{R}^{n\times p}$ is said to satisfy the $(s,\delta)$-Restricted Isometry Property (RIP) if for any $s$-sparse vector $u$ in $\mathbb{R}^{p} $, we have:
	\begin{equation*}
	(1-\delta)\|u\|^2 \leq \frac{1}{n}\,\|Xu\|^2 \leq (1+\delta)\|u\|^2
	\end{equation*}
\end{defin}
\noindent As an easy consequence, if matrix $X$ satisfies $(2s,\delta)$-RIP, then Euclidean inner-product is also approximately preserved, that is, $\big|n^{-1}\,\langle Xu, Xv\rangle - \langle u,\,v\rangle \big| \leq \delta\,\|u\|\cdot\|v\|$ holds for any two $s$-sparse vectors $u,\,v\in\mathbb R^p$.

\noindent With these preparations, we make the following assumptions on the true parameter $\beta^\ast$, design matrix $X$, initialization parameter $\alpha$ and step size $\eta$ in Algorithm~\ref{alg1}.

\paragraph{Assumption (A): } The true parameter $\beta^\ast$ is $s$-sparse, and $s=s_1+s_2$, that is, each nonzero signal in $\beta^\ast$ is either weak or strong. In addition, $\kappa m \lesssim1$. 

\paragraph{Assumption (B):}  The design matrix $X$ satisfies $(s+1,\delta)$-RIP condition with $\delta\lesssim   1 /(\kappa \sqrt{s}\log\frac{p}{\alpha})$.

\paragraph{Assumption (C):} The initialization parameter $\alpha$ satisfies $0<\alpha\lesssim p^{-1}$, and the step size $\eta$ satisfies $0<\eta\lesssim (\kappa\log\frac{p}{\alpha})^{-1}$.

Some remarks are in order.
First, our current proof heavily relies on the RIP condition as in Assumption (B), which is satisfied if the predictors are iid and $s\log p\ll n$. However, the extensive simulation studies in the next section provide a strong numerical evidence suggesting that our conclusions remain valid even when the RIP condition is violated. We leave the formal theoretical investigation as an open problem for our future studies. Second, Assumption (A) is made mainly for illustrating the possibility of achieving the fast parametric root $n$ rate of convergence when $s_1=0$. In fact, our current proof can still lead to the typical high-dimensional rate of $\sqrt{s\log p/n}$ without introducing the notion of strong and weak signals. And due to space constraint we omit the details.

\subsection{Non-negative Signals}
To start with, we demonstrate the key ideas of our proofs and give an analysis of the non-negative case as a warm-up. More specifically, we consider the case when all components of true signal $\beta^\ast$ are non-negative. To exploit this non-negativeness, we may instead use the self-Hadamard product parametrization $\beta = u^2=u\circ u$ for $u\in\mathbb R^p$, and write $\beta^\ast=(u^*)^2=u^* \circ u^*$. Now, we have the optimization problem:    
\begin{equation*}
\min_{u\in \mathbb{R}^{p}} f(u)=\frac{1}{2n}\,\|Xu^2-y\|^2,
\end{equation*}
with gradient descent updating formula $u_{t+1}=u_t-2\eta\,  u_t \circ \big[n^{-1}X^T(Xu_t^2-y)\big]$. For technical simplicity, we instead focus on the deterministic initialization $u_0=\alpha \mathbf 1\in\mathbb R^p$. 
This case is simpler for the analysis than the general case since components of $u_t$ will not change sign during the iterations, and will always stay away from saddle points.
We summarize our result in the following main theorem. Since the non-negative signal constraint resembles the positive semi-definiteness constraint in matrix sensing, our proof utilizes the proof strategy in \cite{li2018algorithmic} for analyzing matrix factorized gradient descent for noiseless matrix sensing by dividing the convergence into two stages (more details are provided after the theorem).
\begin{thm}\label{thm1}
	Under the above assumptions (A), (B) and (C). Let $\epsilon=\max\{  \alpha^2, \sigma^2 \frac{  Ms_1}{n}, \sigma^2 \frac{s_2\log p}{n}\}$, $\tau=\max \{\delta \alpha, \sigma \sqrt{\frac{\log p}{n}}\}$ and any $M\geq 1$. Then there exist positive constants $(c_1,\,c_2,\,c_3,\,c_4,\,c_5)$ such that for every time $t$ satisfying $c_1\,  \log(\frac{p}{\alpha})/(\eta m) \leq t\leq c_2 /(\eta \tau)$, with probability at least $1-p^{-c_4}-e^{-c_5\,Ms}$, the time $t$-iterate $u_t$ satisfies
	\begin{align*}
	\|u_t^2-\beta^\ast\|^2\leq  c_5 \,\epsilon ,
	\end{align*}
\end{thm}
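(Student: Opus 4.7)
The plan is to follow the two-stage strategy of Li, Ma, and Zhang (2018) for noiseless matrix sensing, augmented with a Gaussian concentration argument to handle the noise $w$. I would first rewrite the update componentwise as $[u_{t+1}]_j = [u_t]_j\,(1 - 2\eta\,[r_t]_j)$, where $r_t = n^{-1} X^T X(u_t^2 - \beta^\ast) - n^{-1}X^T w \in \mathbb R^p$. Because $u_0 = \alpha\mathbf 1 > 0$ componentwise and Assumption (C) keeps $|\eta[r_t]_j| < 1/2$ throughout, an easy induction shows $u_t > 0$ for all $t$, so the iterates never approach a saddle. Next, I would decompose each coordinate as $[r_t]_j = [u_t^2 - \beta^\ast]_j + \varepsilon^{\mathrm{cross}}_{t,j} + \varepsilon^{\mathrm{noise}}_{t,j}$, using the RIP inner-product bound to get $|\varepsilon^{\mathrm{cross}}_{t,j}| \lesssim \delta\,\|(u_t^2 - \beta^\ast)_{S\cup\{j\}}\|$ (plus a negligible correction from the off-support coordinates of $u_t^2$), and a standard Gaussian-maximum union bound to get $\max_j|\varepsilon^{\mathrm{noise}}_{t,j}| \lesssim \sigma\sqrt{\log p/n}$ on an event of probability at least $1 - p^{-c_4}$.

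The heart of the proof is an inductive invariant: throughout the entire iteration window, $\max_{j\notin S_1}[u_t]_j$ stays of order $\alpha$, which is what allows the decomposition above to close. For $j \in S^c$ (and similarly $j \in S_2$, where $\beta^\ast_j$ is itself of order $\tau$), the multiplicative update is bounded by $|1 - 2\eta[r_t]_j| \le 1 + 2\eta\tau$, so $[u_t]_j \le \alpha\,(1 + 2\eta\tau)^t \le e\,\alpha$ precisely while $t \le c_2/(\eta\tau)$. The coincidence of the upper iteration bound in the theorem with the escape time of the off-support coordinates is exactly the implicit early-stopping mechanism that powers the argument. Verifying the invariant requires the full strength of Assumption (B): only with $\delta \lesssim 1/(\kappa\sqrt s\log(p/\alpha))$ is the cross-term $\delta\|\beta^\ast\|\lesssim \delta\kappa m\sqrt s$ uniformly pushed below $\tau$.

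With the invariant in hand, Phase 1 (signal ignition) is direct: for $j \in S_1$, $[r_t]_j \approx -\beta^\ast_j$ whenever $[u_t]_j^2 \ll \beta^\ast_j$, giving geometric growth $[u_{t+1}]_j \ge (1 + c\eta m)[u_t]_j$, so every strong signal reaches $[u_t]_j \asymp \sqrt{\beta^\ast_j}$ after $t_1 \asymp \log(p/\alpha)/(\eta m)$ steps, matching the lower bound $c_1\log(p/\alpha)/(\eta m)$. Phase 2 (local refinement) then yields linear contraction: writing $e_t = u_t^2 - \beta^\ast$, a Taylor expansion plus RIP gives the recursion $\|e_{t+1,S_1}\|^2 \le (1 - c\eta m)\|e_{t,S_1}\|^2 + O(\eta^2 \sigma^2 M s_1/n)$, where the $M$ factor comes from a sharper Gaussian concentration for the projection $\|P_{S_1}X^T w/n\|^2$ onto the \emph{fixed} $s_1$-dimensional support $S_1$, which is also the source of the $e^{-c_5 Ms}$ term in the probability statement. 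Iterating drives the strong-signal error down to $\sigma^2 Ms_1/n$; adding the $S_2$ contribution (at most $\sum_{j\in S_2}(\beta^{\ast 2}_j + \alpha^2)\lesssim \sigma^2 s_2\log p/n$) and the off-support contribution (at most $p\cdot(C\alpha)^4 \lesssim \alpha^2$) gives the claimed bound $\|u_t^2 - \beta^\ast\|^2 \lesssim \epsilon$.

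The main obstacle I anticipate is maintaining the off-support invariant during Phase 1. The strong signals are growing rapidly while still being large compared to the iterate, so the cross term $\varepsilon^{\mathrm{cross}}_{t,j}$ for $j \in S^c$ inherits a contribution of order $\delta\|\beta^\ast\|$, which must be kept below $\tau$ at every step for $\Theta(\log(p/\alpha)/(\eta m))$ iterations. This is precisely why Assumption (B) scales as $1/(\kappa\sqrt s\log(p/\alpha))$, and threading this scaling together with Assumption (C) so that Phase 1 finishes before the off-support coordinates escape, and then Phase 2 reaches the statistical floor $\epsilon$ before iteration $c_2/(\eta\tau)$, is the delicate scaling bookkeeping of the argument.
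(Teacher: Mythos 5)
Your overall architecture is the same as the paper's: the same two-stage analysis (exponential ignition of the strong coordinates in $\Theta(\log(p/\alpha)/(\eta m))$ steps, then linear contraction to a statistical floor), the same decomposition into strong-signal, weak-signal and off-support parts, the same RIP-based approximation of $n^{-1}X^TX u\circ v$ by $u\circ v$, and the same two concentration events ($\ell_\infty$ bound on $n^{-1}X^Tw$ giving the $p^{-c_4}$ term, and the restricted/projected bound $n^{-1}\|X_{S_1}^Tw\|\lesssim\sigma\sqrt{Ms_1/n}$ giving the $e^{-c_5 Ms}$ term). The final assembly of the error ($S_1$ floor plus $s_2\log p/n$ plus $p\alpha^4$) also matches.

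However, there is a genuine gap in your off-support invariant. You claim that for $j\in S_1^c$ the per-step factor is bounded by $1+2\eta\tau$ \emph{throughout}, so $[u_t]_j\le\alpha(1+2\eta\tau)^t\le e\alpha$ for all $t\le c_2/(\eta\tau)$, and in your last paragraph you assert that Assumption (B) pushes the cross term $\delta\|z_t^2-\beta^\ast\|$ below $\tau$. Neither holds during Stage 1: while the strong signals are still far from $\beta^\ast_{S_1}$, one has $\|z_t^2-I_{S_1}\beta^\ast\|\lesssim\kappa m\sqrt{s}$, so under (B) the cross term is only guaranteed to be of order $m/\log(p/\alpha)$, which for strong signals is at least of order $\sigma\sqrt{\log p/n}$ and typically much larger than $\tau$ (e.g.\ $m\asymp 1$ gives $m/\log(p/\alpha)\asymp 1/\log p\gg\sigma\sqrt{\log p/n}$ for moderate $n$). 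Running your claimed rate analysis honestly, the Stage-1 factor is $1+c\eta m/\log(p/\alpha)$, and if you tried to sustain that for $c_2/(\eta\tau)$ iterations the off-support coordinates would blow up by $\exp\bigl(c\,m/(\tau\log(p/\alpha))\bigr)$, not stay at $e\alpha$. The paper closes this with a two-rate bookkeeping that your write-up is missing: the larger rate $1+c\eta m/\log(p/\alpha)$ is only incurred for the $T_1\asymp\log(p/\alpha)/(\eta m)$ Stage-1 iterations, where the exponents cancel to give a cumulative $e^{O(1)}$ factor, and only \emph{after} $\|z_t^2-I_{S_1}\beta^\ast\|\lesssim\sqrt{\epsilon}$ does the residual shrink enough that the per-step factor drops to $1+c\eta\tau$, which is then sustainable for the remaining $c_2/(\eta\tau)$ iterations. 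Repairing your argument along these lines (splitting the growth bound for $e_t$ and $d_t$ by stage, exactly as in the paper's error-dynamics proposition) is necessary; as written, the step "$[u_t]_j\le e\alpha$ precisely while $t\le c_2/(\eta\tau)$" does not follow from Assumptions (B)--(C). A secondary, more minor inaccuracy: in Stage 2 the noise enters the contraction at order $\eta$ (through the cross term $\eta\langle u_t\circ n^{-1}X^Tw,\,u_t^2-\beta^\ast\rangle$ handled by Cauchy--Schwarz), not at order $\eta^2$ as you state, though this does not change the final rate.
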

 This theorem tells us in high dimension linear regression, combining early stopping with implicit regularization can significantly improve the estimation accuracy. In particular, when all signals are strong ($s_1=s$ and $s_2=0$), the estimate $\hat \beta =u_t^2$ attains a parametric rate $\sigma\sqrt{s_1/n}$ of convergence that is independent of the dimension $p$. In general when weak signals exist, then the overall rate $\sigma \sqrt{\frac{s_1}{n}}+\sigma  \sqrt{\frac{s_2\log p}{n}}$ depends on the number of weak (strong) signals, which is still minimax-optimal \citep{zhao2018pathwise}. The same remark also applies to our theory in the general case.

%

Our proof strategy is to divide the convergence into two stages. Recall that $S=\{j:\,\beta_j^\ast\neq 0\}$ is the support set of true signal $\beta^\ast$, and $S_1 \subset S$ corresponds to the subset of all strong signals.  In the first ``burn-in'' stage, we show that each component of the strong signal part $u_{t,S_1}$ increases at an exponential rate in $t$ until hitting $\sqrt{m}/2$, while the weak signal and error part $u_{t,{S_1}^c}$ remains bounded
by $\mathcal O(p^{-1})$. In the second stage, iterate $u_t$ enters a geometric convergence region where $u_t$ converges towards $u^\ast$ at a linear rate up to some high-order error term, and then stay in a $O(\epsilon)$ neighborhood of $u^\ast$ up to the time $\Theta(1/\tau)$. Therefore, the time interval $c_1\,  \log(\frac{p}{\alpha})/(\eta m) \leq t\leq c_2 /(\eta \delta \tau)$ would be the theoretical ``best solution region'' corresponding to the stabilized region in figure~\ref{fig:31}.

More specifically, in the proof we consider the decomposition of $u_t$ into three parts: strong signal part $z_t$, weak signal part $d_t$ and error part $e_t$:
\begin{align*}
u_t=z_t+d_t+e_t, \quad\mbox{with}\quad
z_t:=I_{S_1} u_t\in\mathbb R^p,  \quad
d_t:=I_{S_2} u_t \in \mathbb R^p \quad\mbox{and}\quad
e_t:=I_{S^c}u_t\in\mathbb R^p,
\end{align*} 
where recall that $I_E$ is the diagonal matrix with ones on the positions indexed by subset $E\subset\{1,\ldots,p\}$ and zero elsewhere. We use induction to prove the following results characterizing the gradient dynamics in the first stage. Recall that $\theta_{s_1}(b)$ denote the $s_1$-th largest absolute component value of vector $b\in\mathbb R^p$ and $m$ is the minimal strength of the strong signals.

\begin{pro}[Stage one dynamics]\label{pro2.1}
	Under assumptions of theorem~\ref{thm1}, there are constants $(c_7,c_8)$ and $(c_7',c_8')$, such that for each $t<T_1=c_7\,\log(\frac{p}{\alpha})/(\eta m)$, we have:
	\begin{align*}
	\mbox{strong signal dynamics:}& \qquad   \theta_{s}(z_{t+1})\geq \min\Big\{\frac{\sqrt{m}}{2},\, \Big(1+\frac{\eta m}{4}\Big)\,\theta_{s}(z_t) \Big\},   \\
	&\qquad \|z_t\|_\infty\leq c_7',\quad\mbox{and}\quad \|z_t^2-I_{S} \beta^\ast\|\leq c_7'\,\sqrt{s} ;\\[0.3em]
		\mbox{weak signal dynamics:}& \qquad \|d_{t+1}\|_{\infty}\leq \big(1+c_8\, \eta\, m/\log(p\alpha)\big)\|d_t\|_{\infty} \leq c_8' /p.\\[0.3em]
	\mbox{error dynamics:}& \qquad \|e_{t+1}\|_{\infty}\leq \big(1+c_8\, \eta\, m/\log(p\alpha)\big)\|e_t\|_{\infty} \leq c_8' /p.
	\end{align*}
\end{pro}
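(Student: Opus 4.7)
The plan is to run an induction on $t$ that simultaneously tracks $z_t$, $d_t$, and $e_t$ through the componentwise form of the gradient update. Writing $r_t=n^{-1}X^T(X u_t^2 - y)$, the recursion becomes $u_{t+1,j}=u_{t,j}(1-2\eta r_{t,j})$ for every $j$, so each coordinate of $u_t$ evolves multiplicatively and the sign of the update is controlled by $r_{t,j}$. Plugging in $y=X\beta^*+w$ gives the decomposition
\begin{equation*}
r_{t,j}=(u_{t,j}^2-\beta_j^*)+\Delta_{t,j}-w_j^*,\qquad \Delta_t=(n^{-1}X^TX-I)(u_t^2-\beta^*),\quad w^*=n^{-1}X^Tw.
\end{equation*}
Standard Gaussian concentration gives $\|w^*\|_\infty\lesssim \sigma\sqrt{\log p/n}$ with probability $1-p^{-c_4}$, and by $(s+1,\delta)$-RIP together with a bound on the effective support of $u_t^2-\beta^*$ maintained below, $|\Delta_{t,j}|\leq \delta\,\|u_t^2-\beta^*\|$ for every $j$.

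For $j\in S_1$, I would show by induction that $z_{t,j}^2\leq \beta_j^*$ throughout stage one, so $u_{t,j}^2-\beta_j^*\leq 0$. Since $\beta_j^*\geq 2\sigma\log p\sqrt{\log p/n}$ by the definition of a strong signal, $|w_j^*|$ is at most $\beta_j^*/(4\log p)$; and by Assumption (B), $|\Delta_{t,j}|\leq \delta\sqrt{s}\cdot c_7'\leq \beta_j^*/4$. Consequently
\begin{equation*}
1-2\eta r_{t,j}\ \geq\ 1+\eta(\beta_j^*-u_{t,j}^2)\ \geq\ 1+\tfrac{\eta m}{4}
\end{equation*}
as long as $z_{t,j}^2\leq\tfrac12\beta_j^*$, which yields the exponential lower bound on $\theta_{s_1}(z_{t+1})$. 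The matching upper bound $\|z_t\|_\infty\leq c_7'$ follows because $1-2\eta r_{t,j}\leq 1+O(\eta\kappa m)\leq 2$ under Assumption~(C), so individual coordinates cannot overshoot $2\sqrt{\beta_j^*}=O(1)$ in a single step (here $\kappa m\lesssim 1$ from Assumption~(A)); the bound $\|z_t^2-I_S\beta^*\|\leq c_7'\sqrt{s}$ then follows from $s_1$-sparsity.

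For $j\in S_2\cup S^c$ the drift is genuinely tiny: $u_{t,j}^2\leq \|d_t+e_t\|_\infty^2\lesssim p^{-2}$, $|\beta_j^*|\leq 2\sigma\sqrt{\log p/n}$, $|w_j^*|\lesssim \sigma\sqrt{\log p/n}$, and $|\Delta_{t,j}|\leq \delta\sqrt{s}\cdot c_7'$. By Assumption~(B) all of these are $\leq c_8\, m/\log(p/\alpha)$, so $|u_{t+1,j}|\leq \bigl(1+c_8\,\eta\, m/\log(p/\alpha)\bigr)\,|u_{t,j}|$. Iterating this over $T_1=c_7\log(p/\alpha)/(\eta m)$ steps inflates $|u_{0,j}|=\alpha$ by at most $e^{c_7 c_8}$, and $\alpha\lesssim p^{-1}$ from Assumption~(C) yields the $c_8'/p$ upper bound for $d_t$ and $e_t$ alike.

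The main obstacle is the circular dependence between the three invariants through $\Delta_t$: controlling $\|\Delta_t\|_\infty$ via RIP requires a bound on $\|u_t^2-\beta^*\|$, which is itself built from the quantities we are inducting on. I would resolve this by strengthening the inductive hypothesis to include $\|u_t^2-\beta^*\|\leq C\sqrt{s}$ throughout stage one, verified at each step by splitting the vector into its $I_S$-part (at most $s$ nonzeros, each of size $O(1)$ since $|\beta_j^*|\leq \kappa m\lesssim 1$ and by the strong-signal bound) and its $I_{S^c}$-part (sup-norm $\lesssim 1/p$, hence $\ell_2$-norm $\lesssim 1/\sqrt{p}$, which is negligible). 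The critical slack is the logarithmic factor $\log(p/\alpha)$ appearing in both $\delta^{-1}$ and $\eta^{-1}$: it ensures that the multiplicative perturbation to the ideal dynamics is $O(1/\log(p/\alpha))$ per step, so the accumulated error over $T_1$ iterations is only $O(1)$, which closes the induction.
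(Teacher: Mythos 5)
Your overall plan is the same as the paper's: decompose $u_t$ into strong-signal, weak-signal and error parts, run an induction maintaining $\|z_t\|_\infty\lesssim 1$, $\|z_t^2-I_S\beta^\ast\|\lesssim\sqrt{s}$, $\|d_t\|_\infty,\|e_t\|_\infty\lesssim 1/p$, exploit the multiplicative coordinatewise update $u_{t+1,j}=u_{t,j}(1-2\eta r_{t,j})$, control the perturbation by RIP plus the sup-norm Gaussian bound, and accumulate a factor $e^{O(1)}$ over $T_1\asymp\log(p/\alpha)/(\eta m)$ steps. However, one step as written does not follow: the claim that $(s+1,\delta)$-RIP gives $|\Delta_{t,j}|\leq\delta\,\|u_t^2-\beta^\ast\|$ for $\Delta_t=(n^{-1}X^TX-I)(u_t^2-\beta^\ast)$. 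The vector $u_t^2-\beta^\ast$ is \emph{not} sparse: it has $p-s$ nonzero coordinates coming from $e_t^2$, and there is no small ``effective support'' — those coordinates are individually tiny, which is an $\ell_1$-type statement, not a sparsity statement, and RIP inner-product bounds apply only when the vector hit by $n^{-1}X^TX$ is $s$-sparse. The paper resolves exactly this by splitting the residual into the $s$-sparse part $z_t^2+d_t^2-\beta^\ast$ (treated with its Lemma~\ref{lem3}, giving $\delta\|z_t^2+d_t^2-\beta^\ast\|$) and the dense part $e_t^2$ (treated with its Lemma~\ref{lem4}, giving $\delta\|e_t^2\|_1+\|e_t^2\|_\infty\lesssim\delta/p$, which is where the induction bound $\|e_t\|_\infty\lesssim 1/p$ is actually used). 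Your proof needs this decomposition made explicit; without it the key perturbation bound feeding every subsequent estimate is unjustified.

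Two smaller points in the strong-signal part also need repair. First, the invariant $z_{t,j}^2\leq\beta_j^\ast$ cannot be maintained exactly: near the fixed point a step can overshoot $\beta_j^\ast$ by $O(\eta\beta_j^{\ast 2}+\eta m\beta_j^\ast/\log(p/\alpha))$, so you should work with a relaxed version such as $z_{t,j}^2\leq(1+c)\beta_j^\ast$. Second, the justification of $\|z_t\|_\infty\leq c_7'$ by ``the per-step factor is at most $2$, so a coordinate cannot overshoot in a single step'' is not by itself a bound over $T_1$ iterations; what closes the argument (and what the paper uses in its Proposition~\ref{apro2.3}) is the restoring drift: once $z_{t,j}^2$ exceeds $\beta_j^\ast$ plus the perturbation slack, the multiplicative factor drops below $1$, i.e.\ $2\eta x^3-c\eta x\geq 0$ for $x\gtrsim 1$, so $\|z_{t+1}\|_\infty\leq\|z_t\|_\infty$ whenever $\|z_t\|_\infty\gtrsim 1$. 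With the RIP splitting supplied and these two points tightened, your argument matches the paper's proof.
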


Define $\Theta_{LR}=\{(z,\, d, \,e)\in\mathbb R^{3p}:\, z_{S_1^c}=0, d_{S_2^c}=0,  \theta_{s}(z) \geq \sqrt{m}/2,\,\|z\|_\infty\leq c_7' ,\, \|z^2-I_{S} \beta^\ast\|\leq c_7'\,\sqrt{s}, \, \|d\|_\infty \leq c_8'/p,\, \|e\|_\infty \leq c_9'/p\}$ as a good parameter region for $(z_t,d_t,e_t)$. Proposition~\ref{pro2.1} tells that for all $t<T_1$, iterate $(z_t, d_t, e_t)$ satisfies all constraints of $\Theta_{LR}$ except $\theta_{s}(z) \geq \sqrt{m}/2$, but will enter this good region in at most $\mathcal O(\log(\frac{m}{\alpha^2})/(\eta m))$ iterations. The second stage starts when $(z_t,d_t,e_t)$ first enters $\Theta_{LR}$. The next result summarizes the behavior of the gradient dynamics in the second stage---once it enters $\Theta_{LR}$, it will stay in $\Theta_{LR}$ up to time $t=\Theta(1/\tau)$, where $u_t$ converges towards $u^\ast$ at a linear rate up to the statistical error $\epsilon$. 

\begin{pro}[Stage two dynamics]\label{pro2.4}
	Under assumptions of theorem~\ref{thm1}, there exists some constant $c_9$ such that if $(z_t,d_t,e_t) \in \Theta_{LR}$, then $(z_{t+1},d_{t+1},e_{t+1}) \in \Theta_{LR}$, and
	\begin{equation*}
	\|u_{t+1}^2-\beta^\ast\|^2\leq (1-\eta m)\,\|u_t^2-\beta^\ast\|^2+c_{9} \,\epsilon.
	\end{equation*} 
\end{pro}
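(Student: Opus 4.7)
The plan is to work with the multiplicative form $u_{t+1}=u_t\circ(\mathbf 1-2\eta r_t)$ of the update, where $r_t:=n^{-1}X^T(Xu_t^2-y)$. Setting $h_t:=u_t^2-\beta^*$ and substituting $y=X\beta^*+w$, I decompose $r_t=h_t+\Delta_t-\xi$ with $\Delta_t:=(n^{-1}X^TX-I)h_t$ and $\xi:=n^{-1}X^Tw$. Two standard auxiliary estimates feed the argument: (i) assumption (B) and the $(s+1,\delta)$-RIP, combined with the split $h_t=I_S h_t+e_t^2$ in which $I_S h_t$ is $s$-sparse and $\|e_t^2\|=O(p^{-3/2})$ by the $\Theta_{LR}$ bound $\|e_t\|_\infty\leq c_9'/p$, yield the entrywise estimate $|\Delta_{t,j}|\lesssim\delta\,\|I_S h_t\|+O(p^{-3/2})$; and (ii) a sub-Gaussian tail bound for $w\sim\mathcal N(0,\sigma^2I)$ together with RIP-based column normalization gives $\|\xi\|_\infty\lesssim\sigma\sqrt{\log p/n}$ on an event of probability at least $1-p^{-c_4}$, while a $\chi^2$-concentration yields the tighter $\|I_S\xi\|^2\lesssim\sigma^2 Ms_1/n$ (with probability $\geq 1-e^{-c_5 Ms}$) that is needed to land on the parametric rate. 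Squaring the update then yields the workhorse identity
\[
h_{t+1}\ =\ h_t\circ(\mathbf 1-4\eta u_t^2)\ -\ 4\eta u_t^2\circ\Delta_t\ +\ 4\eta u_t^2\circ\xi\ +\ 4\eta^2 u_t^2\circ r_t^2.
\]

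I would next check invariance of $\Theta_{LR}$ coordinatewise using this identity. On $S_1$, the hypothesis $u_{t,j}^2=z_{t,j}^2\in[m/4,(c_7')^2]$ combined with the RIP and noise estimates shows that $z_{t+1,j}^2$ moves toward $\beta_j^*$ by a multiplicative factor $1-\Theta(\eta m)$ plus an $O(\eta)$-small noise perturbation, so $\theta_{s_1}(z_{t+1})\geq\sqrt m/2$, $\|z_{t+1}\|_\infty\leq c_7'$, and $\|z_{t+1}^2-I_S\beta^*\|\leq c_7'\sqrt s$ are all preserved once the constants are chosen with some slack. On $S_2\cup S^c$, where $|u_{t,j}|\leq c'/p$ is already tiny, the multiplicative step satisfies $|1-2\eta r_{t,j}|\leq 1+O(\eta)$ because $|r_{t,j}|$ is bounded on $\Theta_{LR}$; hence $|u_{t+1,j}|$ can expand by at most a factor $1+O(\eta)$ in one step, which is absorbed into the one-step slack in $c_8'$ and $c_9'$, preserving $\|d_{t+1}\|_\infty\leq c_8'/p$ and $\|e_{t+1}\|_\infty\leq c_9'/p$.

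For the contraction, I take squared $\ell_2$ norms and apply Young's inequality in the weighted form $(x+y)^2\leq(1+\eta m)x^2+(1+(\eta m)^{-1})y^2$ with $x=h_t(\mathbf 1-4\eta u_t^2)$ and $y$ absorbing the RIP, noise, and $\eta^2$ remainders. The leading term splits coordinatewise: on $S_1$, $(1-4\eta u_{t,j}^2)^2\leq(1-\eta m)^2$, while on $S_1^c$ it is at most $1$. Using $(1+\eta m)(1-\eta m)^2\leq 1-\eta m$, this gives $(1+\eta m)\|h_t\circ(\mathbf 1-4\eta u_t^2)\|^2\leq(1-\eta m)\|h_t\|^2+2\eta m\,\|I_{S_1^c}h_t\|^2\leq(1-\eta m)\|h_t\|^2+O(\epsilon)$, where invariance of $\Theta_{LR}$ bounds $\|I_{S_1^c}h_t\|^2\lesssim\sigma^2 s_2\log p/n+O(p^{-3})=O(\epsilon)$. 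The $(\eta m)^{-1}$-weighted remainder is bounded by $O(\eta/m)\|u_t^2\circ(\Delta_t-\xi)\|^2$; because $u_t^2$ is concentrated on $S$, this splits into a RIP piece $\lesssim\eta\delta^2 s\|h_t\|^2/m$, which is negligible versus $\eta m$ by assumption (B) ($s\delta^2\lesssim m^2/\log^2(p/\alpha)$), and a noise piece $\lesssim\eta\sigma^2 Ms_1/(mn)+\text{tiny}$, absorbed into $c_9\epsilon$. The $4\eta^2 u_t^2\circ r_t^2$ term is lower-order by the step-size restriction in assumption (C).

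The main obstacle is the mismatch between the uniform $(1-\eta m)$ contraction rate in the claimed inequality and the fact that only the $S_1$-coordinates of $h_t$ genuinely enjoy that rate; the $S_2$ and $S^c$ coordinates merely fail to grow. The resolution is the accounting mechanism above: invariance of $\Theta_{LR}$ forces the off-$S_1$ contribution to $\|h_t\|^2$ to be of order $\epsilon$ throughout stage two, so uniformly promoting those coordinates' contraction to $1-\eta m$ is paid for by an additive $\eta m\cdot O(\epsilon)$ that folds into $c_9\epsilon$. A secondary delicate point is exploiting $\chi^2$- (rather than uniform) concentration of the noise restricted to the deterministic index set $S_1$ of size $s_1$, so the cross term contributes at the parametric scale $\sigma^2 Ms_1/n$ rather than the cruder $\sigma^2 s\log p/n$; without this refinement, the parametric rate advertised in Theorem~\ref{thm1} would be lost.
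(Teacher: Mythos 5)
Your contraction argument is essentially a repackaged version of the paper's: you use the multiplicative identity $u_{t+1}=u_t\circ(\mathbf 1-2\eta r_t)$ and a weighted Young inequality, while the paper expands $\|u_{t+1}^2-\beta^\ast\|^2$ into five terms ordered by powers of $\eta$ and bounds each through $\|I_{S_1}\nabla f(u_t)\|^2$; both routes rest on the same ingredients (the RIP approximation lemmas, the restricted Hanson--Wright bound $n^{-1}\|X_{S_1}^Tw\|\lesssim\sigma\sqrt{Ms_1/n}$, the lower bound $\theta_{s_1}(z_t)\geq\sqrt m/2$ on $\Theta_{LR}$ that converts $\|h_t\circ z_t\|^2$ into an $\eta m$ contraction, and the observation that the $S_2\cup S^c$ contribution to $\|h_t\|^2$ is $O(\epsilon)$). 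You also correctly identify that the $S_1$-restricted $\chi^2$-type noise bound, not the uniform $\sigma\sqrt{\log p/n}$ bound, is what yields the parametric term $\sigma^2 Ms_1/n$. Up to constant-level bookkeeping this half is fine and parallel to the paper.

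The genuine gap is in the invariance half, specifically for the weak-signal and error coordinates. You argue that on $S_2\cup S^c$ the per-step multiplier is $|1-2\eta r_{t,j}|\leq 1+O(\eta)$ and that this growth is ``absorbed into the one-step slack in $c_8'$ and $c_9'$.'' That does not prove $(d_{t+1},e_{t+1})$ stay below the fixed thresholds $c_8'/p$, $c_9'/p$: a region defined by hard constants is not preserved by a per-step factor $1+O(\eta)$ (if $\|e_t\|_\infty$ is already at the boundary, one step exits), and since Theorem~\ref{thm1} needs the conclusion for all $t$ up to $T_2\asymp 1/(\eta\tau)$, compounding $1+O(\eta)$ over that horizon gives a factor of order $e^{O(1/\tau)}$, which destroys $\|e_t\|_\infty\lesssim 1/p$ and, downstream, the $O(\epsilon)$ accounting of $\|I_{S_1^c}h_t\|^2$ that your own contraction step uses. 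The paper closes exactly this hole: in stage two, because $e_t\circ(z_t^2+d_t^2-\beta^\ast)=0$ and $d_t\circ(z_t^2-I_{S_1}\beta^\ast)=0$, the on-support part of $r_t$ enters the off-support updates only through the RIP cross term $\delta\|z_t^2+d_t^2-\beta^\ast\|\lesssim\delta\sqrt\epsilon$, so together with the noise term the per-step factor is $1+c\eta\tau$ with $\tau=\max\{\delta\alpha,\sigma\sqrt{\log p/n}\}$ (second half of the paper's error-dynamics proposition), and compounding over at most $T_2=c/(\eta\tau)$ iterations gives total growth $e^{c}=O(1)$, which is what keeps $\|d_t\|_\infty,\|e_t\|_\infty\lesssim 1/p$ throughout stage two. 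Your proposal needs this refinement — growth rate tied to $\tau$ via the disjoint-support cancellation plus the already-contracted residual $\|z_t^2+d_t^2-\beta^\ast\|\lesssim\sqrt\epsilon$, and a time-horizon bookkeeping tied to $1/(\eta\tau)$ — for the invariance claim (and hence the proposition as stated) to hold.
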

A combination of these two propositions, whose proofs are deferred to the supplementary material, leads to a proof of Theorem~\ref{thm1}.

\subsection{General Signals}
Now let us consider the general case where signs of nonzero components in signal $\beta^\ast$ are unknown. 
In this case, we need the full Hadamard product parameterization $\beta=g\circ l$ and want to solve
\begin{equation*}
\min_{g,l\in \mathbb{R}^{p}} f(g,l)=\frac{1}{2n}\,\|X(g\circ l)-y\|^2,
\end{equation*} 
where the associated gradient descent updating rule becomes:
\begin{equation*}
\begin{cases} 
{g}_{t+1}=g_t-\eta \,l_t \circ \big[n^{-1}X^{T}(Xg_t\circ l_t-y)\big]\\[0.2em] 
\, {l}_{t+1}=l_t-\eta \,g_t \circ \big[n^{-1}X^{T}(Xg_t\circ l_t-y)\big]
\end{cases}\ \mbox{with initial condition}\ 
\begin{cases} 
{g}_{0}=\alpha\mathbf 1\\[0.2em] 
\, {l}_{0}=\mathbf 0
\end{cases},
\end{equation*}
Here again for technical simplicity, we focus on a deterministic initialization as above.
Our main result is summarized in the following theorem.

\begin{thm}\label{thm2}
	Under the above assumptions (A), (B) and (C). Let $\epsilon=\max\{  \alpha^2, \sigma^2 \frac{  Ms_1}{n}, \sigma^2 \frac{s_2\log p}{n}\}$, $\tau=\max \{\delta \alpha, \sigma \sqrt{\frac{\log p}{n}}\}$ and any $M\geq 1$. Then there exist positive constants $(\tilde c_1,\,\tilde c_2,\,\tilde c_3,\,\tilde c_4,\,\tilde c_5)$ such that for every time $t$ satisfying $\tilde c_1 \, \log(\frac{p}{\alpha})/(\eta  m) \leq t\leq \tilde c_2/(\eta \tau)$,  with probability at least $1-p^{-\tilde c_4}-e^{-\tilde c_5\,Ms}$, the time $t$-iterate $\beta_t=g_t\circ l_t$ satisfies
	\begin{align*}
	\|\beta_t-\beta^\ast\|^2\leq \tilde c_5 \,\epsilon .
	\end{align*}
\end{thm}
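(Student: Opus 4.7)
The plan is to reduce the general mixed-sign case to two coupled copies of the non-negative dynamics analyzed in Theorem~\ref{thm1}. Consider the orthogonal change of variables
\begin{equation*}
u = (g+l)/\sqrt{2}, \qquad v = (g-l)/\sqrt{2}, \qquad g\circ l = \tfrac{1}{2}(u^2 - v^2).
\end{equation*}
A direct computation shows that the Algorithm~\ref{alg1} update decouples into the componentwise multiplicative recursions
\begin{equation*}
u_{t+1} = u_t \circ (\mathbf{1} - \eta r_t), \qquad v_{t+1} = v_t \circ (\mathbf{1} + \eta r_t),
\end{equation*}
with $r_t = n^{-1}X^T(X\beta_t - y)$, and the initialization becomes $u_0 = v_0 = (\alpha/\sqrt{2})\mathbf{1}$. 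Within each coordinate $j$, the scalar $\mathrm{sign}(r_{t,j})$ determines which of $u_{t,j}, v_{t,j}$ grows and which decays; since RIP applied to the near-zero iterate gives $r_{t,j}\approx -\beta^\ast_j$ at initialization, for $\beta^\ast_j>0$ we expect $u_{t,j}$ to be \emph{active} and $v_{t,j}$ \emph{passive}, and vice versa for $\beta^\ast_j<0$. Multiplying the two recursions yields the conservation identity $u_{t,j}v_{t,j} = \tfrac{1}{2}\alpha^2\prod_{s<t}(1-\eta^2 r_{s,j}^2)$, which forces the passive side to be small whenever the active side is large.

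I would then mirror the two-stage induction of Theorem~\ref{thm1}. Partition $S_1 = S_1^+ \cup S_1^-$ by sign of $\beta^\ast_j$ and track the triple $(z_t, d_t, e_t)$, where $z_t$ collects the active entries ($u_{t,j}$ for $j\in S_1^+$ and $v_{t,j}$ for $j\in S_1^-$), $d_t$ the weak-signal entries, and $e_t$ all remaining entries (including the passive counterparts $v_{t,S_1^+}, u_{t,S_1^-}$ and the off-support coordinates). Stage one is the direct analogue of Proposition~\ref{pro2.1}: by induction on $t$, the active entries satisfy $\theta_s(z_{t+1}) \geq \min\{\sqrt{m/2},\,(1+\eta m/4)\,\theta_s(z_t)\}$, driven by $|r_{t,j}|\gtrsim m$ on $S_1$, while $\|d_t\|_\infty$ and $\|e_t\|_\infty$ remain $O(p^{-1})$. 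The inductive step is closed using $(s+1,\delta)$-RIP to control the cross terms in $r_t$ and the high-probability estimate $\|n^{-1}X^Tw\|_\infty \lesssim \sigma\sqrt{\log p/n}$ for the noise contribution, uniformly over the $T_1 = \Theta(\log(p/\alpha)/(\eta m))$ iterations of stage one. Stage two is the analogue of Proposition~\ref{pro2.4}: once $(z_t,d_t,e_t)$ first enters a good region $\Theta_{LR}^{\mathrm{gen}}$ (the natural generalization of $\Theta_{LR}$ incorporating passive-coordinate bounds from the conservation identity), the region is forward invariant and one obtains the one-step contraction
\begin{equation*}
\|\beta_{t+1} - \beta^\ast\|^2 \leq (1-\eta m)\,\|\beta_t - \beta^\ast\|^2 + c\,\epsilon,
\end{equation*}
derived from the expansion $\beta_{t+1}-\beta_t = -\eta (g_t^2 + l_t^2)\circ r_t + O(\eta^2)$ together with restricted strong convexity inherited from RIP on the approximately sparse $\beta_t - \beta^\ast$. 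Iterating the contraction yields $\|\beta_t - \beta^\ast\|^2 \lesssim \epsilon$ throughout the window $\tilde c_1\log(p/\alpha)/(\eta m)\leq t\leq \tilde c_2/(\eta\tau)$.

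The main obstacle, and the genuinely new content beyond Theorem~\ref{thm1}, is keeping the passive coordinates $v_{t,S_1^+},u_{t,S_1^-}$ dormant throughout both stages: a single sign flip of $r_{t,j}$ for $j\in S_1$ would trigger exponential growth of the passive side and destroy the sparse structure of $\beta_t$. The conservation identity above is the key lever---combined with Assumption~(C)'s bounds $\alpha\lesssim p^{-1}$ and $\eta\lesssim 1/(\kappa\log(p/\alpha))$, it pins the passive side at $O(\alpha^2/\sqrt{m})$ as soon as the active side exits its initial neighborhood, so the wrong-sign contribution to $\beta_t$ is negligible relative to $\epsilon$. Once this sign-control is in place, the rest of the argument---propagating RIP-based bounds through the multiplicative updates and absorbing $O(\eta^2)$ corrections via the step-size condition---parallels the non-negative case and reduces the general-signal proof to a controlled perturbation of the argument behind Theorem~\ref{thm1}.
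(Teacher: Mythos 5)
Your plan is essentially the paper's own route: your $(u,v)=((g+l)/\sqrt2,(g-l)/\sqrt2)$ are exactly the paper's variables $a_t=(g_t+l_t)/2$, $b_t=(g_t-l_t)/2$ up to a constant rescaling, and your two-stage induction (active coordinates grow geometrically while weak-signal/error coordinates stay $O(p^{-1})$, then forward invariance of a good region plus the one-step contraction $\|\beta_{t+1}-\beta^\ast\|^2\le(1-\eta m)\|\beta_t-\beta^\ast\|^2+c\,\epsilon$) is precisely the structure of Propositions~\ref{pro3.1} and~\ref{pro3.4}, proved with the same RIP lemmas and noise concentration. The one genuinely different ingredient is how you keep the passive coordinates dormant: the paper shows directly (Proposition~\ref{apro3.2}) that for $j\in S_1$ the wrong-sign square decays geometrically at rate $(1-\eta|\beta_j^\ast|/4)$ down to $O(\alpha^4)$, whereas you invoke the conservation identity $u_{t,j}v_{t,j}=\tfrac{1}{2}\alpha^2\prod_{s<t}(1-\eta^2 r_{s,j}^2)$, which pins the passive entry at $O(\alpha^2/\sqrt m)$ once the active entry exceeds $\sqrt m/2$. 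Your identity is a clean substitute (note it needs the active entry to be non-decreasing during stage one, i.e.\ the sign control on $r_{t,j}$, which your induction maintains anyway), but it yields the weaker bound $\min\{u_{t,j}^2,v_{t,j}^2\}\lesssim\alpha^4/m$ rather than the paper's $\alpha^4$; this extra $1/m$ is harmless under Assumption (C) since $\alpha\lesssim p^{-1}\ll m$, so the stage-two error accounting still closes. One omission worth flagging: to obtain the dimension-free $\sigma^2 Ms_1/n$ component of $\epsilon$ in the contraction you need the restricted $\ell_2$ noise bound $\|n^{-1}X_{S_1}^Tw\|\lesssim\sigma\sqrt{Ms_1/n}$ (the paper's Lemma~\ref{lem6}, giving the $e^{-cMs}$ term in the probability), not just $\|n^{-1}X^Tw\|_\infty\lesssim\sigma\sqrt{\log p/n}$ as in your sketch; with only the $\ell_\infty$ bound you would land at the slower $\sigma^2 s\log p/n$ rate. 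With that ingredient added, your argument is a faithful, correct reconstruction of the paper's proof.
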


The proof strategy of this result is similar to the previous nonnegative signal case by considering two stages.
However, due to the lack of knowledge on signs of nonzero components in signal $\beta^\ast$, the proof of Theorem~\ref{thm2} has the extra component in showing that $\beta_t$ tends to shoot towards the right direction (positive versus negative) during the first ``burn-in" stage before entering the second geometric convergence region as in the proof for nonnegative signals in the previous subsection. To deal with this extra complication, we introduce another reparametrization in the proof:
\begin{equation*}
{a}_{t}=(g_t+l_t)/2\in\mathbb R^p,\quad\mbox{and}\quad
{b}_{t}=(g_t-l_t)/2\in\mathbb R^p \quad\mbox{for}\quad  t=0,1\ldots.
\end{equation*}
Notice that $\beta_t=a_t^2-b_t^2$, implying that the sign of each component $\beta_{t,j}$ is determined by the relative magnitude of $|a_{t,j}|$ and $|b_{t,j}|$. These new variables $(a_t,b_t)$ are especially convenient for understanding the dynamics of the sign of $\beta_t$ due to a more tractable updating formula:
\begin{align*}
a_{t+1}=a_{t}-\eta \,a_{t}\circ \big[n^{-1}X^T(X \beta_t-y)\big],\ \mbox{and} \
b_{t+1}=b_{t}+\eta\, b_{t}\circ \big[n^{-1}X^T(X \beta_t-y)\big].
\end{align*} 
 More precisely, we have the following results characterizing the gradient dynamics in the first stage---showing that depending on the sign of $\beta_{j}^\ast$ for $j\in S_1$, either $a_{j,t}^2$ or $b_{j,t}^2$ will increase at an exponential rate while the other remaining small. Recall that $S_1$ denotes the support of strong signals of $\beta^\ast$.

\begin{pro}[Stage one dynamics]\label{pro3.1}
	Under assumptions of theorem~\ref{thm2}, there are constants $(\tilde c_7,\tilde c_8,\tilde c_9)$ and $(\tilde c_7',\tilde c_8',\tilde c_9')$, such that for each $t<T_1=\tilde c_7\,\log(\frac{p}{\alpha})/(\eta m)$, we have:
	\begin{align*}
	&\mbox{signal dynamics:} \qquad \mbox{sign}(\beta^\ast_{S_1})\circ \beta_{t,S_1} \geq \min\Big\{\frac{m}{2},\, \Big(1+\eta m/4\Big)^t\,\alpha^2 - \tilde c_9'\,\alpha^2\Big\}\, \mathbf 1\in\mathbb R^{s_1}, \\
	&\qquad \max\big\{\|a_{t,S_1}\|_\infty,\, \|b_{t,S_1}\|_\infty\big\}\leq \tilde c_7',\quad\mbox{and}\quad \|\beta_{t,S_1}-\beta^\ast_{S_1}\|\leq \tilde c_7'\,\sqrt{s_1};\\[0.3em]
	&\mbox{weak signals and error dynamics:} \qquad \|a_{t+1,S_1^c}\|_{\infty}\leq \big(1+\tilde c_8\, \eta\,m/\log(p/\alpha)\big)\|a_{t,S_1^c}\|_{\infty} \leq \tilde c_8' /p,\\
	&\quad\mbox{and} \qquad \|b_{t+1,S_1^c}\|_{\infty}\leq \big(1+\tilde c_8\, \eta\,m/ \log(p/\alpha) \big)\|b_{t,S_1^c}\|_{\infty} \leq \tilde c_8' /p.
	\end{align*}
\end{pro}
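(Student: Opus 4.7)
The plan is to prove all three bounds simultaneously by induction on $t$, paralleling the induction used for Proposition~\ref{pro2.1} in the non-negative case but now tracking the two reparametrized variables $a_t = (g_t+l_t)/2$ and $b_t = (g_t-l_t)/2$ separately. Using the updating rules given in the excerpt, $a_{t+1} = a_t\circ(\mathbf 1 - \eta r_t)$ and $b_{t+1} = b_t\circ(\mathbf 1 + \eta r_t)$ with $r_t = n^{-1}X^T(X\beta_t-y)\in\mathbb R^p$ and $\beta_t = a_t^2 - b_t^2$. The central quantity to control on each coordinate $j$ is therefore $r_{t,j}$, which I decompose as
\begin{equation*}
r_{t,j} \;=\; \bigl[n^{-1}X^TX(\beta_t-\beta^\ast)\bigr]_j \;-\; n^{-1}X_j^Tw.
\end{equation*}
For the noise piece, I invoke a standard Gaussian tail bound (union bound over $j$) to get $\|n^{-1}X^Tw\|_\infty \lesssim \sigma\sqrt{\log p/n}$ with probability $1-p^{-\tilde c_4}$ on the event we condition on. For the signal piece, I apply the $(s+1,\delta)$-RIP assumption to the $(s+1)$-sparse vectors supported on $S\cup\{j\}$; combined with the induction hypothesis $\|\beta_{t,S}-\beta^\ast_S\|\lesssim \sqrt{s}$ plus $\|\beta_{t,S^c}\|_\infty \leq 1/p^2$ (following from $a,b$ bounds in the induction), this term is at most $\delta(\|\beta^\ast\|+\tilde c_7'\sqrt{s_1}) + O(1/p)$, which under Assumption (B) is $\ll m/(\kappa\log(p/\alpha))$.

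The next step is to handle the three coordinate regimes separately. \textbf{(i) Strong signals $j\in S_1$ with $\beta_j^\ast>0$:} writing $\sigma_j = \mathrm{sign}(\beta_j^\ast)$, I show $-r_{t,j} = \beta_j^\ast - (\beta_{t,j}-\beta_j^\ast) + \text{RIP/noise} \geq m/2$ until $a_{t,j}^2$ reaches $m/2$, since $\beta_{t,j}\leq m/2$ in that regime. Hence $a_{t+1,j} \geq (1+\eta m/4)\,a_{t,j}$ and $a_{t,j}\geq \alpha/2\,(1+\eta m/4)^t$. Meanwhile the multiplier for $b$ is $1+\eta r_{t,j} \leq 1 - \eta m/4 \leq 1$, so $b_{t,j}$ stays bounded by $\alpha/2$, giving $\sigma_j\beta_{t,j} = a_{t,j}^2 - b_{t,j}^2 \geq (1+\eta m/4)^{2t}\alpha^2/4 - \alpha^2/4$. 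The case $\beta_j^\ast<0$ is symmetric by swapping the roles of $a$ and $b$. Once $\sigma_j\beta_{t,j}$ approaches $m/2$ the driving term $\beta_j^\ast - \beta_{t,j}$ may become small, but this only means growth slows --- I just need to push to the stated lower bound. The $\|\beta_{t,S_1}-\beta^\ast_{S_1}\|\leq \tilde c_7'\sqrt{s_1}$ bound is then immediate from boundedness of $a_{t,S_1},b_{t,S_1}$ together with $\kappa m\lesssim 1$ (Assumption (A)).

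\textbf{(ii) Coordinates $j\in S_1^c$ (weak signals and zero coordinates):} here the same decomposition gives $|r_{t,j}|\lesssim \delta\sqrt{s_1}\cdot\|\beta^\ast\|_\infty + \sigma\sqrt{\log p/n} + \delta\alpha + O(1/p^2)$, which under Assumptions (B) and (C) is at most $\tilde c_8\,m/\log(p/\alpha)$. Therefore
\begin{equation*}
\|a_{t+1,S_1^c}\|_\infty \leq (1+\eta|r_t|)\|a_{t,S_1^c}\|_\infty \leq \bigl(1+\tilde c_8\,\eta m/\log(p/\alpha)\bigr)\,\|a_{t,S_1^c}\|_\infty,
\end{equation*}
and the analogous bound for $b$. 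Iterating this bound up to $t\leq T_1 = \tilde c_7\log(p/\alpha)/(\eta m)$ gives a growth factor of at most $\exp(\tilde c_7\tilde c_8) \lesssim 1$, so starting from $\alpha/2\lesssim 1/p$ we keep $\|a_{t,S_1^c}\|_\infty,\|b_{t,S_1^c}\|_\infty \leq \tilde c_8'/p$ throughout stage one. This closes the induction.

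The main obstacle will be step (i): to run the induction I need a uniform lower bound on $-\sigma_j r_{t,j}$ of order $m$, but the cross-talk term $\bigl[n^{-1}X^TX(\beta_t-\beta^\ast)\bigr]_j$ involves the whole vector $\beta_t-\beta^\ast$, whose strong-signal block $\beta_{t,S_1}-\beta^\ast_{S_1}$ is potentially of order $\|\beta^\ast\|$. The RIP bound must therefore be sharp enough to offset the condition number $\kappa$ that enters through $\|\beta^\ast\|_\infty/m$, which is exactly why Assumption (B) asks for $\delta\lesssim 1/(\kappa\sqrt{s}\log(p/\alpha))$ rather than merely $\delta\lesssim 1/\sqrt{s}$. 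A secondary subtle point is ensuring $b_{t,j}$ (for $\beta_j^\ast>0$) does not flip sign or blow up during stage one; this requires verifying that the multiplier $1+\eta r_{t,j}$ stays in $(0,1]$, which follows from $\eta\lesssim (\kappa\log(p/\alpha))^{-1}$ in Assumption (C).
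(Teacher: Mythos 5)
Your proposal is correct and follows essentially the same route as the paper: induction on the same three hypotheses, RIP (Lemmas~\ref{lem3}--\ref{lem4}) plus the Gaussian bound of Lemma~\ref{lem5} to approximate each gradient coordinate by $\beta_{t,j}-\beta^\ast_j$ up to an error of order $m/\log(p/\alpha)$, sign-dependent exponential growth of $a_{t,j}$ or $b_{t,j}$ on $S_1$, and multiplicative control of the $S_1^c$ coordinates accumulating only a constant factor over $T_1$ iterations. The only substantive (but harmless here) difference is that you merely keep the wrong-sign variable bounded by $\alpha/2$, whereas the paper shows it decays geometrically to $O(\alpha^4)$ --- a stronger fact it later reuses in the stage-two analysis (Proposition~\ref{pro3.4}); also note the small algebra slip in writing $-r_{t,j}$ as $\beta_j^\ast-(\beta_{t,j}-\beta_j^\ast)$ rather than $\beta_j^\ast-\beta_{t,j}$, which does not affect your conclusion.
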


Similarly, let $\Theta_{LR}^G=\{(a,\,b)\in\mathbb R^{2p}:\, \mbox{sign}(\beta^\ast_{S_1})\circ (a_{S_1}^2- b_{S_1}^2) \geq (m/2) \, \mathbf 1,\,\max\big\{\|a_{S_1}\|_\infty,\, \|b_{S_1}\|_\infty\big\}\leq \tilde c_7',\, \|a_{S_1}^2- b_{S_1}^2-\beta^\ast_{S_1}\|\leq \tilde c_7'\,\sqrt{s_1},\, \max\big\{\|a_{S_1^c}\|_\infty,\,\|b_{S_1^c}\|_\infty\big\} \leq \tilde c_8'/p\}$ as a good parameter region for $(a_t,b_t)$. Proposition~\ref{pro3.1} tells that for all $t<T_1$, iterate $(a_t,b_t)$ satisfies all constraints of $\Theta_{LR}^G$ except $\mbox{sign}(\beta^\ast_{S_1})\circ (a_{S_1}^2- b_{S_1}^2) \geq m/2$, but will enter this good region in at most $\mathcal O(\log(\frac{m}{\alpha^2})/(\eta m))$ iterations. The second stage starts when $(a_t,b_t)$ first enters $\Theta_{LR}^G$. The next result summarizes the behavior of the gradient dynamics in the second stage---once it enters $\Theta_{LR}^G$, it will stay in $\Theta_{LR}^G$ for a long time $\Theta(1/\tau)$, where $\beta_t=a_t^2-b_t^2$ converges toward $\beta^\ast$ at a linear rate up to the statistical error $\epsilon$. 

\begin{pro}[Stage two dynamics]\label{pro3.4}
	Under assumptions of theorem~\ref{thm2}, there exists some constant $\tilde c_9$ such that if $(a_t,b_t) \in \Theta_{LR}^G$, then $(a_{t+1},b_{t+1}) \in \Theta_{LR}^G$, and $\beta_{t}=a_t^2-b_t^2$ satisfies
	\begin{equation*}
	\|\beta_{t+1}-\beta^\ast\|^2\leq (1-\eta m)\,\|\beta_t-\beta^\ast\|^2+\tilde c_9\, \epsilon.
	\end{equation*} 
\end{pro}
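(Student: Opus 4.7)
The plan is to derive a one-step identity for $\beta_{t+1}$, extract the contracting quadratic form, and then verify each of the four defining constraints of $\Theta^G_{LR}$ componentwise. Starting from $a_{t+1}=a_t\circ(\mathbf 1-\eta r_t)$ and $b_{t+1}=b_t\circ(\mathbf 1+\eta r_t)$ with $r_t=n^{-1}X^T(X\beta_t-y)$, squaring and subtracting gives the clean identity
\begin{equation*}
\beta_{t+1}-\beta^\ast \;=\; \Delta_t-2\eta\,D_t\circ r_t+\eta^2\,\beta_t\circ r_t^2,\qquad \Delta_t:=\beta_t-\beta^\ast,\quad D_t:=a_t^2+b_t^2.
\end{equation*}
I would then decompose $r_t=n^{-1}X^TX\Delta_t-n^{-1}X^Tw$ and expand $\|\beta_{t+1}-\beta^\ast\|^2$, isolating the first-order signal-driven cross-term $-4\eta\langle\Delta_t,D_t\circ r_t\rangle$ from the noise contribution and the $\eta^2$ remainders, the latter absorbed using Assumption~(C).

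The contraction comes from lower bounding $\langle\Delta_t,D_t\circ r_t\rangle$. Set $u_t=D_t\circ\Delta_t$ and split it into its $S$-part, which is $s$-sparse, and its $S^c$-tail, whose entries are $O(p^{-4})$ inside $\Theta^G_{LR}$ and hence negligible. The RIP inequality replaces $n^{-1}\langle X u_t,X\Delta_t\rangle$ by $\langle u_t,\Delta_t\rangle=\sum_j D_{t,j}\Delta_{t,j}^2$ up to an $O(\delta\|u_t\|\|\Delta_t\|)$ error, while the noise term $n^{-1}\langle X u_t,w\rangle$ is bounded on the high-probability event of Theorem~\ref{thm2} via $n^{-1}\|X^Tw\|_\infty\lesssim\sigma\sqrt{\log p/n}$ together with an $\ell_1$--$\ell_\infty$ inequality using the approximate $s$-sparsity of $u_t$. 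Since $D_{t,j}\geq|a_{t,j}^2-b_{t,j}^2|=|\beta_{t,j}|\geq m/2$ for $j\in S_1$,
\begin{equation*}
\sum_j D_{t,j}\Delta_{t,j}^2 \;\geq\; \tfrac{m}{2}\,\|\Delta_{t,S_1}\|^2 \;\geq\; \tfrac{m}{2}\bigl(\|\Delta_t\|^2-\|\Delta_{t,S_1^c}\|^2\bigr),
\end{equation*}
and $\|\Delta_{t,S_1^c}\|^2\lesssim\|\beta^\ast_{S_2}\|^2+\|\beta_{t,S_1^c}\|^2\lesssim\epsilon$ follows from the weak-signal bound and the $\tilde c_8'/p$ control in $\Theta^G_{LR}$. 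Assembling the pieces yields $\|\beta_{t+1}-\beta^\ast\|^2\leq(1-\eta m)\|\beta_t-\beta^\ast\|^2+\tilde c_9\,\epsilon$.

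To verify invariance of $\Theta^G_{LR}$, each constraint is examined via the componentwise rule $a_{t+1,j}=a_{t,j}(1-\eta r_{t,j})$, $b_{t+1,j}=b_{t,j}(1+\eta r_{t,j})$. In the good region, $\|r_t\|_\infty$ stays bounded (its deterministic part is controlled by RIP and the third constraint in $\Theta^G_{LR}$; its noise part is $O(\sigma\sqrt{\log p/n})$), so each factor is $1+O(\eta)$. On $S_1$, the sign and magnitude constraints as well as the aggregate proximity $\|\beta_{t+1,S_1}-\beta^\ast_{S_1}\|\leq\tilde c_7'\sqrt{s_1}$ all follow from the just-established contraction combined with the near-unit multiplicative factors. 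Off $S_1$, the same factor is not contractive, which is the crux of the proof: strict one-step invariance of $\|a_{S_1^c}\|_\infty,\|b_{S_1^c}\|_\infty\leq\tilde c_8'/p$ is not automatic because $|r_{t,j}|$ is generically nonzero. My plan is to couple invariance with the contraction in a joint induction --- mirroring the structure of the analogous argument for Proposition~\ref{pro2.4} --- so that $\prod_{s<t}(1+\eta|r_{s,j}|)$ remains below an absolute constant over the stage-two horizon $t\lesssim 1/(\eta\tau)$, absorbed by picking $\tilde c_8'$ sufficiently larger than the value inherited from Proposition~\ref{pro3.1}. This off-support accounting is the main technical obstacle and the only place where the time bound $t\leq\tilde c_2/(\eta\tau)$ of Theorem~\ref{thm2} is essential.
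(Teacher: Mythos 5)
Your overall route is the same as the paper's: the one-step identity $\beta_{t+1}-\beta^\ast=\Delta_t-2\eta\,(a_t^2+b_t^2)\circ r_t+\eta^2\beta_t\circ r_t^2$, extraction of the first-order term, RIP-based replacement of $n^{-1}X^TX$ by the identity on the (approximately) $s$-sparse part, the lower bound $a_{t,j}^2+b_{t,j}^2\geq|\beta_{t,j}|\geq m/2$ on $S_1$, absorption of the $\eta^2$ remainders via Assumption (C), and control of the off-$S_1$ coordinates through the $(1+c\eta\tau)$ per-step growth accumulated over the stage-two horizon $t\lesssim 1/(\eta\tau)$. All of that matches the paper's argument (including its own convention of proving contraction for $\|\beta_{t,S_1}-\beta^\ast_{S_1}\|^2$ and pushing the $S_2$ and $S^c$ contributions into $\epsilon$).

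There is, however, one genuine gap: your treatment of the noise term $n^{-1}\langle Xu_t,w\rangle$ on the strong-signal coordinates. Bounding it by $\|u_t\|_1\cdot n^{-1}\|X^Tw\|_\infty\lesssim\sqrt{s_1}\,\|u_{t,S_1}\|\,\sigma\sqrt{\log p/n}$ and then applying Young's inequality to preserve the $(1-\eta m)$ contraction leaves an additive term of order $\sigma^2 s_1\log p/(nm)$. This is \emph{not} within $\tilde c_9\,\epsilon$ for the $\epsilon=\max\{\alpha^2,\,\sigma^2 Ms_1/n,\,\sigma^2 s_2\log p/n\}$ of Theorem~\ref{thm2} when $M$ is a constant; you would only recover the statement by taking $M\gtrsim\log p$, which destroys the dimension-free $\sigma^2 s_1/n$ rate that is the entire point of separating strong from weak signals. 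The paper avoids this by using the restricted $\ell_2$ concentration of Lemma on $\frac1n\|X_{S_1}^\top w\|\lesssim\sigma\sqrt{Ms_1/n}$ (Hanson--Wright plus RIP), paired with Cauchy--Schwarz against the $s_1$-sparse vector $\sqrt{|\beta_{t,S_1}|}\circ r_{t,S_1}$; the $\ell_\infty$ bound $\sigma\sqrt{\log p/n}$ is reserved for the weak-signal and error coordinates, where the $\log p$ factor is allowed to appear in $\epsilon$ via $\sigma^2 s_2\log p/n$. Replace your $\ell_1$--$\ell_\infty$ step on $S_1$ by this restricted $\ell_2$ bound (and do the same inside the $\eta^2$ remainder terms that involve the noise) and the rest of your outline goes through as in the paper.
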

A combination of these two propositions leads to a proof of Theorem~\ref{thm2}.

Finally, we provide a theorem regarding the strong signal selection consistency.
\begin{thm}[Variable selection consistency]\label{thm5}
Under assumptions of theorem~\ref{thm2}, for every time $t$ satisfying $\tilde c_1 \, \log(\frac{p}{\alpha})/(\eta m) \leq t\leq \tilde c_2/(\eta \tau)$,  with probability at least $1-p^{-\tilde c_3}-n^{-\tilde c_4}$,  the following hard thresholded estimator $\beta'_t$,
	\begin{align*}
\beta'_{t,j} =\begin{cases} g_{t,j} l_{t,j}, & \mbox{if } |g_{t,j} l_{t,j}| \geq \lambda \\ 0, & \mbox{if }  |g_{t,j} l_{t,j}| < \lambda, \end{cases},
\end{align*}
for any $\lambda \in  \left[\frac{\tilde c_{10}}{p},  \tilde{c}_{11}\sigma \sqrt{\frac{\log p}{n}}\right] $ where $\tilde c_{10}$ and $\tilde c_{11}$ are positive constants,
satisfies variable selection consistency, that is, $\{j:\, \beta'_{t,j} \neq 0,\,j=1,\ldots,p\} =S_1$.
\end{thm}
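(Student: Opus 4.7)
The claim reduces to component-wise control on $\beta_t=g_t\circ l_t$ separately on $S_1$ and on $S_1^c$, and this control is already essentially embedded in the good region $\Theta_{LR}^G$ that drives the proof of Theorem~\ref{thm2}. My plan is therefore to bypass the $\ell_2$ bound of Theorem~\ref{thm2} and instead read off component-wise bounds from the invariance of $\Theta_{LR}^G$.

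First, I would establish that $(a_t,b_t)\in\Theta_{LR}^G$ for \emph{every} $t$ in the window $[\tilde c_1 \log(p/\alpha)/(\eta m),\,\tilde c_2/(\eta \tau)]$. Proposition~\ref{pro3.1} guarantees that up to time $T_1=\tilde c_7\log(p/\alpha)/(\eta m)$ all defining constraints of $\Theta_{LR}^G$ except $\mbox{sign}(\beta^\ast_{S_1})\circ (a_{S_1}^2-b_{S_1}^2)\geq (m/2)\mathbf 1$ hold, while its geometric-growth bound on the signal dynamics forces the remaining magnitude constraint to activate within an additional $\mathcal O(\log(m/\alpha^2)/(\eta m))$ iterations. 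Choosing $\tilde c_1\geq \tilde c_7$ with a mild slack ensures $(a_t,b_t)\in\Theta_{LR}^G$ at the window's left endpoint, and Proposition~\ref{pro3.4} keeps the iterate inside $\Theta_{LR}^G$ throughout the window.

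Second, I would read off the two required component-wise bounds from $\Theta_{LR}^G$. For $j\in S_1$ the defining inequality of $\Theta_{LR}^G$ gives $|\beta_{t,j}|=|a_{t,j}^2-b_{t,j}^2|\geq m/2$. By Assumption~(A), $m\geq 2\sigma\log p\sqrt{\log p/n}$, so for any $\lambda\leq \tilde c_{11}\sigma\sqrt{\log p/n}$ with $\tilde c_{11}$ a fixed positive constant (and $p$ large enough), $|\beta_{t,j}|\geq m/2\geq \lambda$; hence every strong-signal coordinate survives the hard threshold, giving $S_1\subset\{j:\beta'_{t,j}\neq 0\}$. For $j\in S_1^c=S_2\cup S^c$, the bound $\max\{\|a_{t,S_1^c}\|_\infty,\|b_{t,S_1^c}\|_\infty\}\leq \tilde c_8'/p$ encoded in $\Theta_{LR}^G$ yields
\[
|\beta_{t,j}|=|a_{t,j}^2-b_{t,j}^2|\leq a_{t,j}^2+b_{t,j}^2\leq \frac{2(\tilde c_8')^2}{p^2},
\]
which, upon taking $\tilde c_{10}\geq 2(\tilde c_8')^2/p$ (automatic for $p$ large), is strictly less than $\tilde c_{10}/p\leq\lambda$. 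Thus every non-strong coordinate is killed by the threshold, giving the reverse inclusion $\{j:\beta'_{t,j}\neq 0\}\subset S_1$, and combining the two inclusions delivers the identity $\{j:\beta'_{t,j}\neq 0\}=S_1$.

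Third, the probability statement is inherited from Propositions~\ref{pro3.1} and~\ref{pro3.4}; a union bound converts their guarantees into the claimed $1-p^{-\tilde c_3}-n^{-\tilde c_4}$, where the exponential tail $e^{-\tilde c_5 Ms}$ is absorbed by taking $M\asymp \log n$. I expect no conceptual obstacle --- the result is essentially a corollary of the invariance of $\Theta_{LR}^G$. The only point requiring care is matching constants so that (i) the burn-in time $T_1$ from Proposition~\ref{pro3.1} is no larger than the window's left endpoint, and (ii) the floor $m/2$ supplied by $\Theta_{LR}^G$ dominates every admissible threshold in $[\tilde c_{10}/p,\tilde c_{11}\sigma\sqrt{\log p/n}]$; this last bookkeeping is precisely where the strong-signal definition $m\gtrsim \sigma\log p\sqrt{\log p/n}$ is genuinely used.
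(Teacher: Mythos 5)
Your proof is correct, and it rests on the same two-stage machinery (Propositions~\ref{pro3.1} and~\ref{pro3.4}) as the paper's argument, but the lower bound on the strong coordinates is obtained by a genuinely different (and somewhat cleaner) mechanism. The paper invokes the estimation error from the proof of Theorem~\ref{thm2}, namely $\|\beta_{t,S_1}-\beta^\ast_{S_1}\|_\infty \le \|\beta_{t,S_1}-\beta^\ast_{S_1}\|\lesssim \sigma\sqrt{s/n}$, and then a triangle inequality $|\beta_{t,j}|\ge |\beta^\ast_j|-\sigma\sqrt{s/n}\ge \lambda$; this gives a sharper per-coordinate localization around $\beta^\ast_j$, but it implicitly requires the $\ell_2$ error to be dominated by the gap between the strong-signal scale $\sigma\log p\sqrt{\log p/n}$ and the threshold. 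You instead read the floor $\mathrm{sign}(\beta^\ast_{S_1})\circ(a_{t,S_1}^2-b_{t,S_1}^2)\ge (m/2)\,\mathbf 1$ directly from membership in $\Theta_{LR}^G$, so all you need is $m/2\ge \lambda$, which is immediate from the definition of strong signals once $\log p$ exceeds a constant multiple of $\tilde c_{11}$; this bypasses any comparison between the estimation error and the signal strength. On $S_1^c$ the two arguments coincide: the $O(1/p)$ bounds on $a_{t,S_1^c},b_{t,S_1^c}$ inside $\Theta_{LR}^G$ give $|\beta_{t,j}|\lesssim p^{-2}<\tilde c_{10}/p\le\lambda$, which is the paper's $\|\beta_{t,S_1^c}\|_\infty\le c\alpha^2<\lambda$ in different clothing. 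Your bookkeeping for the time window (burn-in completed before the left endpoint, invariance of $\Theta_{LR}^G$ up to time $\Theta(1/\tau)$) and your conversion of the $e^{-\tilde c_5 Ms}$ tail into $n^{-\tilde c_4}$ by taking $M\asymp\log n$ are both sound, and indeed more explicit than what the paper records for the probability statement.
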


This theorem implies the variable selection consistency of our method is not sensitive to the choice of tuning parameter of noise level $\lambda$. As shown in the toy example in section~\ref{sec:2.5}, the Lasso suffers form false positives due to non-discoveries of weak signals when the cross validation error in prediction is applied as the criterion. However, even when we apply the same regularization to our method, since the tuning parameter is still in the region $\left[\frac{\tilde c_{10}}{p},  \tilde{c}_{11}\sigma \sqrt{\frac{\log p}{n}}\right] $,  the variable selection consistency of our method can still achieve.  As we will illustrate in the numerical part below, our method simultaneously achieves the best false discovery control and attains the most accurate prediction.

\section{Simulations and Real Data Analysis}
We start with examples with noiseless data to show that under this setting, our gradient descent Algorithm~\ref{alg1} truly converges to the least $\ell_1$-norm global minimizer. These examples provide strong numerical evidences to our statements in Section~\ref{Sec:Theory} even when the RIP condition is violated. After that, we consider the general noisy settings, and compare our methods with state-of-the-art approaches. This section ends up with a real data analysis.

\subsection{Simulations for Noiseless Case}
In this section, we conduct some numerical experiments to illustrate the performance of our method. In our first example, we study the performance as we change the initialization under settings where the RIP may or may not hold. In the second example, we study the behavior of the algorithm when the null space property \citep{cohen2009compressed} is violated, under which the $\ell_1$-norm minimizer in the basis pursuit problem differs from the sparsest solution (the $\ell_0$-norm minimizer).

\begin{figure}[t]\label{fig:20}
	\begin{subfigure}[b]{0.45\textwidth}
		\includegraphics[height=5.5cm, width=7.2cm]{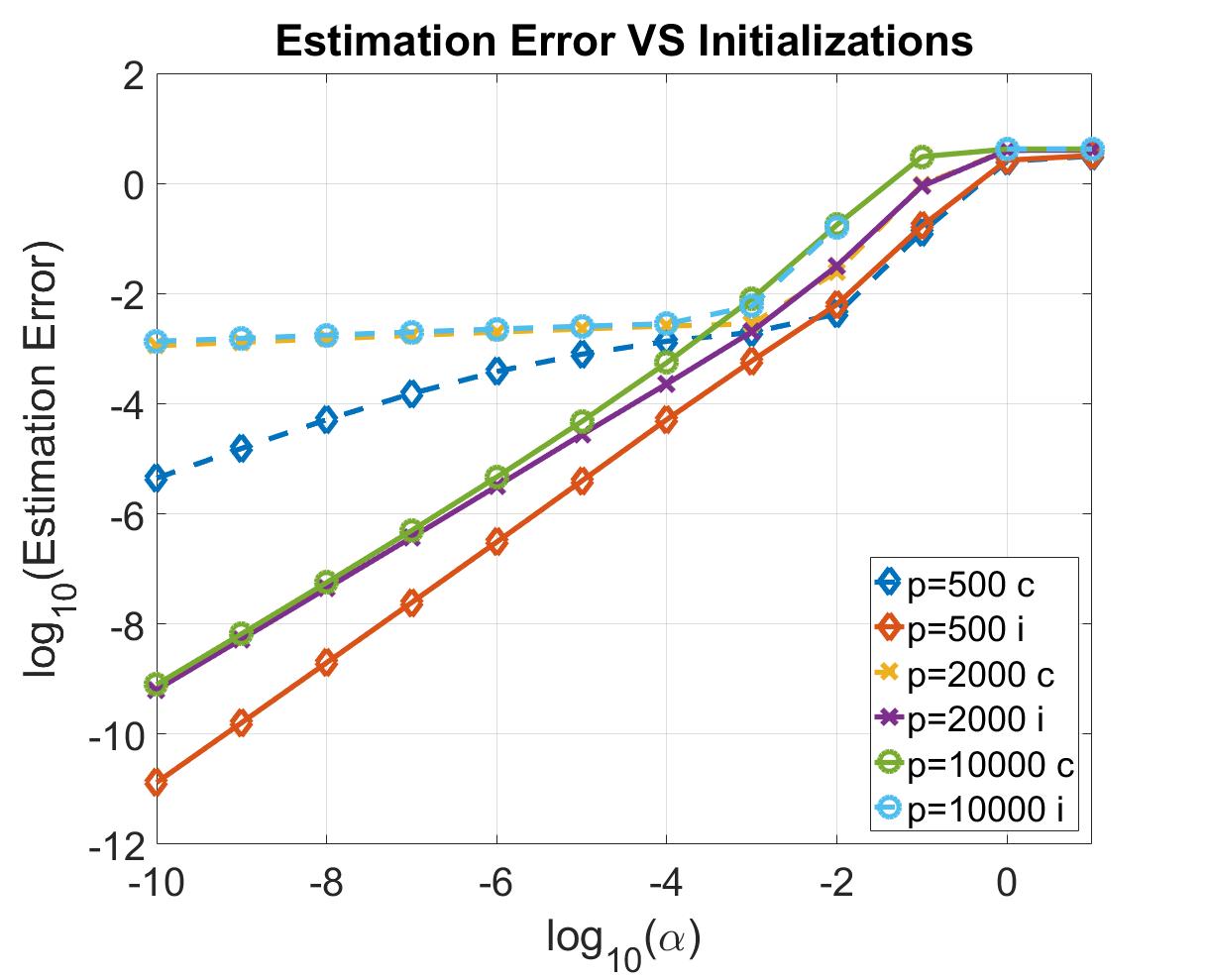}
		\caption{Estimation Error vs initialization}
	\end{subfigure}
	\quad \quad
	~ 
	\begin{subfigure}[b]{0.45\textwidth}
		\includegraphics[height=5.5cm, width=7.2cm]{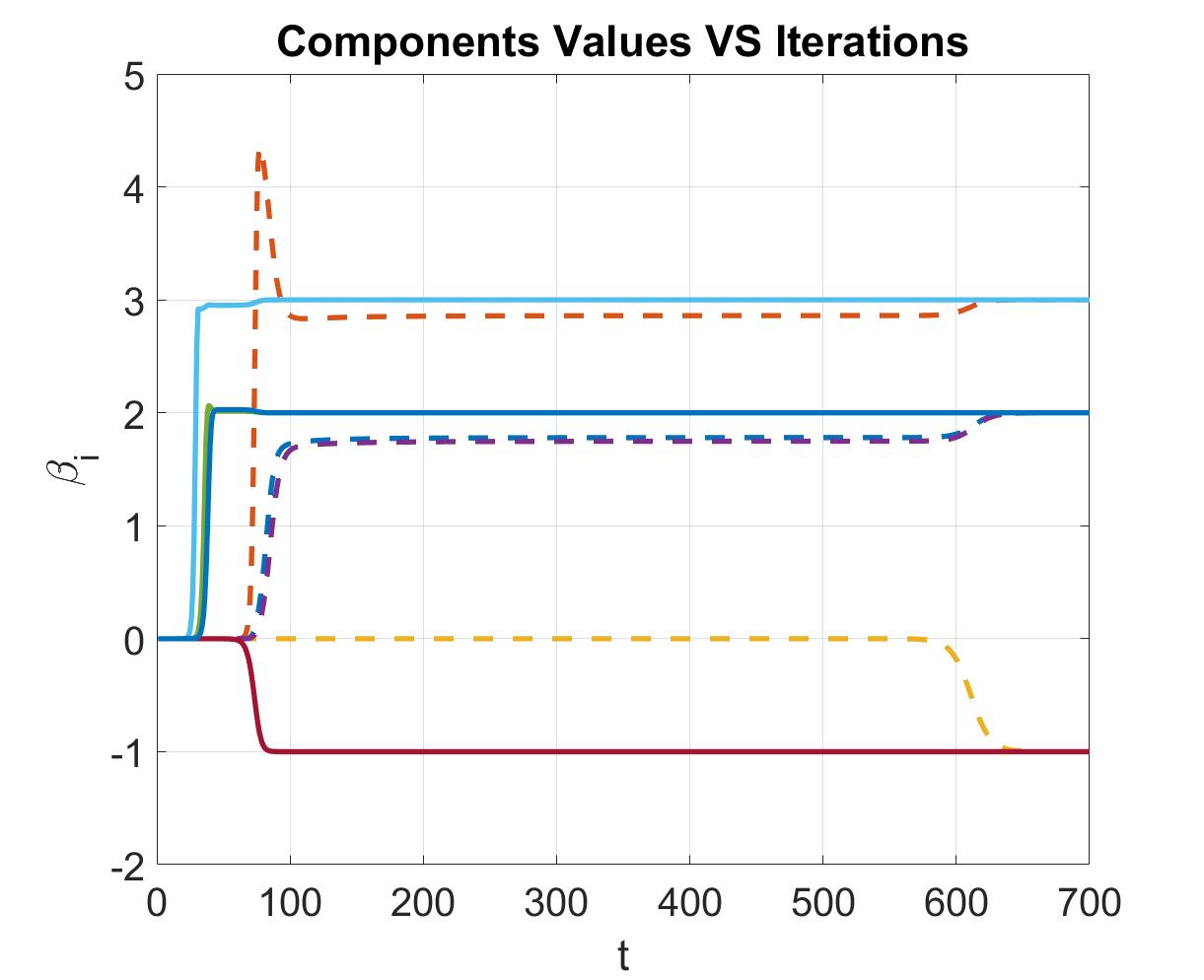}
		\caption{Signal components trajectories}
		\label{fig:tiger}
	\end{subfigure}
	\caption{Panel (a) is a log-log plot of estimation error $\|\widehat \beta-\beta^\ast\|_2$ versus initialization level $\alpha$. Panel (b) shows trajectories of signal components ($\beta_1$ to $\beta_4$) at $(n,p)=(200,2000)$. In both plots, solid curves correspond to the independent (i) design, and dashed curves the correlated (c) design. }\label{fig:200}
	\vspace{-0.7em}
\end{figure}

\paragraph{Impact of initialization:}
We consider two classes of random designs $X\in\mathbb R^{n\times p}$ with i.i.d.~rows $X^{(i)}\overset{iid}{\sim}\mathcal N(0,\,\Sigma)\in \mathbb R^p$, where population p.s.d~covariance matrix $\Sigma\in\mathbb R^{p\times p}$ is
\begin{align*}
\mbox{independent design:}\quad \Sigma_{jk}=\mathbb I(j=k), \quad\mbox{or}\quad \mbox{correlated design:}\quad \Sigma_{jk}=0.5+0.5\,\mathbb I(j=k),
\end{align*}
for $j,k=1,\ldots,p$, where $\mathbb I(A)$ denotes the indicator function of event $A$.
We choose sample size $n=200$, sparsity level $s=4$, and signal dimension $p\in\{500, 2000,10000\}$. For the independent (correlated) design, the RIP is satisfied (fails) with high probability for some small $\delta>0$. In both scenarios, we choose true signal $\beta^\ast=(-1,2,2,3)^T\in\mathbb R^p$, and set $y=X\beta^\ast$. When implementing gradient descent, we choose step size $\eta=0.2\ (0.1)$ for the independent (correlated) design, $\alpha\in\{10^{-10},10^{-9},\ldots,10^{1}\}$, and stopping threshold $\epsilon=0.01\alpha$. Figure~\ref{fig:200} shows the estimation error $\|\widehat \beta-\beta^\ast\|_2$ versus $\alpha$ in log-log plots. As we can see, they tend to have a linear trend under the log-scale, which is consistent with our theoretical error bound estimate in Section 4.
In addition, in the correlated design scenario where the RIP does not hold, the algorithm is still able to recover $\beta^\ast$ as $\alpha\to0$, albeit under a slower convergence (due to a smaller allowable step size and a larger condition number of $X$). This observation provides evidence to the correctness of our informal statement made at the beginning of Section 3 even without RIP condition. We leave the proof of this conjecture open.

\paragraph{$\ell_0$-norm minimizer differs from $\ell_1$-norm minimizer:}
In this example, we study the empirical performance of the algorithm  when the least $\ell_1$-norm in the basis pursuit problem~\eqref{Eqn:CS} is not the sparsest solution of $X\beta=y$ (the null space property is violated). In particular, we choose
\begin{align*}
X=\begin{bmatrix} 0.2 & 1 & 0 \\ 0.2 & 0 &-1 \end{bmatrix},\quad \beta^\ast= \begin{bmatrix} 5\\ 0 \\0 \end{bmatrix}, \quad\mbox{and}\quad y=\begin{bmatrix} 1\\ 1 \end{bmatrix},
\end{align*}
so that $X\beta^\ast =y$. It is easy to verify that for this example, the sparsest solution of $X\beta=y$ is $\beta^\ast$, while the least $\ell_1$-norm solution is $\beta^\dagger = [0,1,-1]^T$. We use the same setting as before for implementing the algorithm with $\alpha\in\{10^{-10},10^{-5},10^{-3},10^{-1},10^0,10^1\}$. Table~\ref{tb1} reports final outputs $\beta=(\beta_1,\beta_2,\beta_3)^T$ of the algorithm. Due to our intuition in Section 3, as expected, the algorithm still converges to the least $\ell_1$-norm solution $\beta^\dagger$ instead of the least $\ell_0$-norm solution $\beta^\ast$. Again, the estimation error decreases as the initialization level $\alpha$ decreases. We conjecture this phenomenon of convergence to the least $\ell_1$-norm solution to be generally true, and leave a formal proof as a future direction.

\begin{table}[h]\caption{Convergent point without null space property} \label{tb1}
	\begin{center}
		$\begin{array}{|ccccccc|} 
		\hline
		\alpha & 10^{-10} & 10^{-5} & 10^{-3} & 0.1 & 1 & 10 \\ 
		\hline
		\beta_1 & 7.433e-13 & 5.703e-7 & 1.289e-4 & 2.884e-2 & 2.987e-1 & 8.823e-1 \\ 
		\hline
		1-\beta_2 & 1.492e-13 & 1.141e-7 & 2.577e-5 & 5.769e-3 & 5.974e-2 & 1.765e-1\\
		\hline
		1+\beta_3 & 1.492e-13 & 1.141e-7 & 2.577e-5 & 5.769e-3 & 5.974e-2 & 1.765e-1 \\
		\hline 
		\end{array}$
	\end{center}
	\vspace{-0.7em}
\end{table}
\subsection{Simulations for Noisy Case}\label{sec:simu}

\paragraph{Comparison with other high dimensional estimators:} We further demonstrate the advantages of our algorithm by considering the following $8$ simulation settings, the sparsity level $s=4$, signals $-1,2,2,3$ and noise level $\sigma$ with $\sigma =0.15*\|\beta^\ast\|$. We generate $3 n$ observations independently and split into $3$ even parts, then use the first part for training, the second part for validation and the final part for testing. The evaluation metric is standardized estimation error $\|\widehat \beta-\beta^\ast\|^2_2/\|\beta^\ast\|^2_2$ and mean prediction error $\sqrt{\|y-\hat{y}\|^2/n}$  for the test data set. We compare the median of the standardized estimation errors and prediction errors with the Lasso, SCAD and MCP by repeating $50$ times. We implement the Lasso using \textit{glmnet} R package \citep{friedman2010regularization} while for SCAD and MCP, we use the R package $\textit{ncvreg}$ \citep{breheny2011coordinate}. The standard error of medians are calculated by bootstrapping the calculated errors $1000$ times. For our algorithm, we use the initialization $\alpha=10^{-5}\times \+1$. The simulation results in Table~\ref{table1} and~\ref{table2} indicate that our methods consistently have the best performance over all explicit penalization-based competitors across all settings.

\begin{enumerate}
	\item \textbf{S1:}  $n=200$, $p=500$, $\Sigma_{jk}=1$ for $j=k$ while $\Sigma_{jk}=0$ for $j\neq k$;
	\item \textbf{S2:}  $n=200$, $p=500$, $\Sigma_{jk}=0.1^{|j-k|}$;
	\item \textbf{S3:}  $n=200$, $p=500$, $\Sigma_{jk}=0.2^{|j-k|}$;
	\item \textbf{S4:}   $n=200$, $p=500$, $\Sigma_{jk}=0.5^{|j-k|}$;
	\item \textbf{S5:}  $n=200$, $p=2000$, $\Sigma_{jk}=1$ for $j=k$ while $\Sigma_{jk}=0$ for $j\neq k$;
	\item \textbf{S6:}  $n=200$, $p=2000$, $\Sigma_{jk}=0.1^{|j-k|}$;
	\item \textbf{S7:}  $n=200$, $p=2000$, $\Sigma_{jk}=0.2^{|j-k|}$;
	\item \textbf{S8:}   $n=200$, $p=2000$, $\Sigma_{jk}=0.5^{|j-k|}$.
\end{enumerate}

\begin{table}[H]
	\begin{tabular*}{\textwidth}{|c@{\extracolsep{\fill}}cccccccc|}
		
		\hline
		&S1 &S2 &S3 &S4 &S5 &S6 &S7 & S8 \\
		\hline
		\multirow{2}{3em}{GD}  
		&\textbf{0.520}	&\textbf{0.448}	&\textbf{0.510}	&\textbf{0.568}	&\textbf{0.385}	&\textbf{0.290} &\textbf{0.465}		&\textbf{0.460}
		
		\\
		&(0.0428)	&(0.0530)	&(0.0607)	&(0.0850)	&(0.0533)	&(0.0465)	&(0.0858)	&(0.0863) 
		\\
		\hline
		\multirow{2}{3em}{Lasso} 
		&3.11	&3.10	&3.42	&4.51	&4.62	&4.04	&4.40	&6.98
		\\
		&(0.219)	&(0.173)	&(0.242)	&(0.274)	&(0.279)	&(0.205)	&(0.306)	&(0.452)
		\\
		\hline
		\multirow{2}{3em}{SCAD}
		&0.613	&0.533	&0.650	&0.691	&0.519	&0.401	&0.595	&0.646
		\\
		&(0.0464)	&(0.0679)	&(0.0702)	&(0.103)	&(0.0527)	&(0.0574)	&(0.0837)	&(0.0776)
		\\
		\hline
		\multirow{2}{3em}{MCP}
		&0.628	&0.552	&0.594	&0.733	&0.484	&0.405	&0.595	&0.708
		\\
		&(0.0392)	&(0.0779)	&(0.0809)	&(0.0902)	&(0.0706)	&(0.0597)	&(0.0741)	&(0.0680)
		\\
		\hline
	\end{tabular*}
	\caption{Simulation result for median of standardized estimation error of each method, with standard derivation in the parenthesis under the median. There are $10^{-3}$ factors for all medians and standard derivations.
		\label{table1}}
\end{table}

\begin{table}[H]
	\begin{tabular*}{\textwidth}{|c@{\extracolsep{\fill}}cccccccc|}
		
		\hline
		&S1 &S2 &S3 &S4 &S5 &S6 &S7 & S8 \\
		\hline
		\multirow{2}{3em}{GD}  
		&0.638	&\textbf{0.636}	&0.640	&\textbf{0.634}	&\textbf{0.646}	&\textbf{0.651} &\textbf{0.641}		&\textbf{0.642}
		
		\\
		&(0.0597)	&(0.0753)	&(0.0718)	&(0.0709)	&(0.0491)	&(0.0510)	&(0.0406)	&(0.0498) 
		\\
		\hline
		\multirow{2}{3em}{Lasso} 
		&0.676	&0.672	&0.671	&0.685	&0.693	&0.699	&0.696	&0.708
		\\
		&(0.0899)	&(0.0981)	&(0.0932)	&(0.0510)	&(0.0568)	&(0.0918)	&(0.0615)	&(0.0488)
		\\
		\hline
		\multirow{2}{3em}{SCAD}
		&0.638	&0.638	&\textbf{0.637}	&0.637	&0.650	&0.654	&0.643	&0.647
		\\
		&(0.0530)	&(0.0724)	&(0.0716)	&(0.0713)	&(0.0470)	&(0.0451)	&(0.0402)	&(0.0434)
		\\
		\hline
		\multirow{2}{3em}{MCP}
		&\textbf{0.637}	&0.639	&0.638	&0.637	&0.650	&0.652	&0.644	&0.647
		\\
		&(0.0516)	&(0.0756)	&(0.0684)	&(0.0753)	&(0.0475)	&(0.0441)	&(0.0435)	&(0.0408)
		\\
		\hline
	\end{tabular*}
	\caption{Simulation result for median of mean prediction error of each method, with standard derivation in the parenthesis under the median. There are $10^{-1}$ factors for all standard derivations.
		\label{table2}}

\end{table}

\paragraph{Comparison in Variable Selection:}  Now we consider variable selection when there exists some weak signals. Suppose the simulation settings are similar with S3 above, with only the true signals changed. Let $s=20$, and the strength of first $4$ signals is $0.5 \sigma \sqrt{\log p/n}$, while the other $16$ are $ 5 \sigma \sqrt{ \log p / n}$, where $\sigma =1$. Clearly the first $4$ signals are too weak to be selected by all methods. However, since all methods are based minimizing the prediction error, the effect of these weak signals pertains, and may increase the false discovery rate. Under the above settings, we perform a model selection based on minimized prediction errors through $5$-fold cross validation. For our method, we use the same regularization parameter as the lasso to perform hard threshold after estimation. We repeat the process $50$ times and compare variable selection errors. From the figure~\ref{fi:vse}, we can see our method is more robust to the enhancement of false detection due to failure on detecting weak signals: although the true negative errors of our method is $4$, which means all weak signals can not be detected, the false detections of our methods are closed to zero. For other methods, although sometimes weak signals can be detected, the risk of false detections is high. Overall, our methods perform consistent variable selection for strong signals, and achieve better estimation than the competitors.
\begin{figure}[h!]
	\includegraphics[scale=0.7]{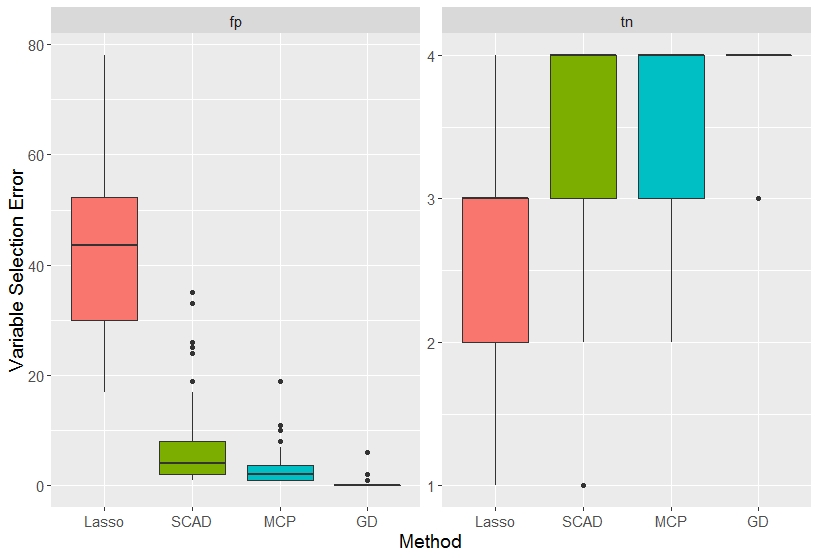}
	\caption{Variable selection errors for selected model based on minimized prediction cross validation errors. `fp' stands for false positive when the truth is zero but detected as signal; `tn' stands for true negative when the truth is nonzero but not detected.}\label{fi:vse}
\end{figure}

\subsection{Real Data Analysis}
We compare our method with others to analyze the Riboflavin data set \citep{buhlmann2014high}, which is available in \textit{hdi} R package. The  dataset contains $71$ observations of log-transformed riboflavin production rate verses the logarithm of expression level of 4088 genes. Before estimation, we first perform independence screening \citep{fan2008sure} based on the rank of the correlation strength for each predictor verses response to decrease the dimension of feature space into $500$. Then we normalize and add the intercept column into the design matrix. For evaluation, we split the observations into $50$ training samples and $21$ testing samples, with performing $10$-fold cross validation to select iteration steps and regularization parameters in the training data. Still, for our algorithm, we use the initial value $\alpha=10^{-5} \times \+1$ for all training processes. We record the prediction errors for testing data set and repeat $50$ times. From the figure~\ref{fi:box} below, our method also obtain the least prediction errors, which implies the estimation of this high dimensional linear regression problems can also have the least errors.

\begin{figure}[h!]
	\includegraphics[scale=0.7]{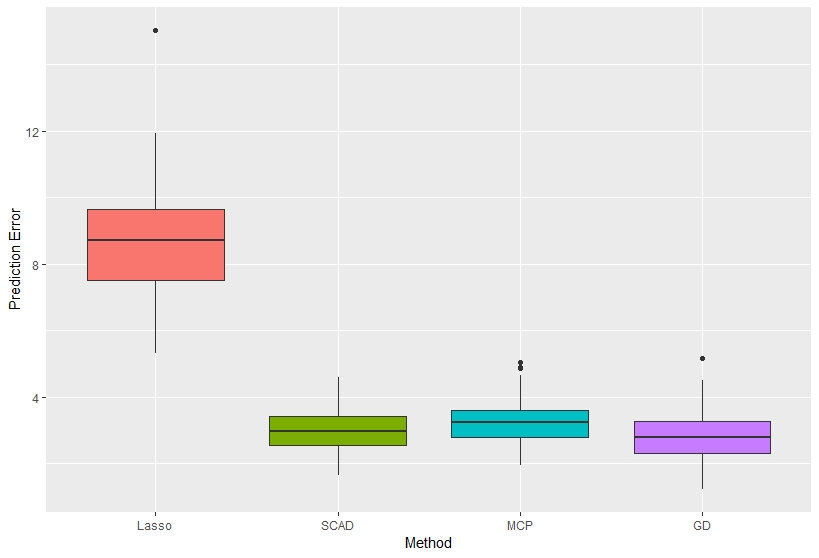}
	\caption{Prediction errors on the test data of Riboflavin data set for each method. $x$-axis stands for the methods used for estimation, and $y$-axis stands for the testing prediction error $\|y-\hat{y}\|$.}\label{fi:box}
\end{figure}

We also perform variable selection on the whole Riboflavin data set with the same tuning parameter obtained through minimized estimation errors based on cross validation. For our algorithm, when we get the number of iterations from cross validation, we run gradient descent on the whole data set with the same initial values and step size until the corresponding number of iterations. We use the same regularization parameter obtained by the lasso as the the hard thresholding value on the absolute value of obtained the `post-estimation' selection. The comparison between different variable selection methods is given in Table~\ref{tb:3}. From the table, we can see except one variable, all other variables detected by our method are also selected by other methods, illustrating that our methods tend to have lower false positive rate for variable selection without sacrificing estimation and prediction accuracy.


\begin{table}[H]
	\begin{tabular*}{\textwidth}{|c@{\extracolsep{\fill}}cccc|}
		
		\hline
		&Lasso &SCAD &MCP &GD \\
		\hline
		\multirow{1}{3em}{Lasso}  
		& 33	&  	&	&
		
		\\
		\hline
		\multirow{1}{3em}{SCAD} 
		&11	&14	&	&
		\\
		\hline
		\multirow{1}{3em}{MCP}
		&2	&3	&5	&
		\\
		\hline
		\multirow{1}{3em}{GD}
		&8	&9	&3	&11
		\\
		
		\hline
		\multirow{1}{3em}{Independent}
		&20	&2	&2	&1
		\\
		\hline
	\end{tabular*}
	\caption{Variable selection result for Riboflavin data set, each cell stands for the number of the same detected variables between row labels and column labels. `Independent' means the number of detected variables for the corresponding column method that are not detected by others. \label{tb:3}
	}
\end{table}

\section{Discussion}
In this paper, we illustrated the phenomenon of implicit regularization through Hadamard product change of variables in high dimensional linear regression, and  demonstrated that a combination of implicit regularization with early stopping yields better estimation than state-of-the-art penalized approaches with explicit regularization. However, we still face several important open problems on implicit regularizations as our future directions. First, our theory heavily relies on the RIP condition, which is relatively strong comparing to the restricted eigenvalue condition as the minimal possible assumption on the design in the literature.
It would be interesting to investigate whether our results remain valid without the RIP condition.
Second, it is interesting to study whether any computationally efficient early stopping rule (rather than cross validation) based on certain date-driven model complexity measure can be applied and provably works.

\section{Proof of the results in the paper}
\subsection{Overview}

In this supplementary material, we provide proofs of the main theorems presented in the paper. 

\subsection{Notation}
Recall that $\|v\|=\sqrt{\sum_{j=1}^p v_j^2}$ and $\|v\|_\infty=\max_{j}|v_j|$ denote the vector $\ell_2$-norm and $\ell_\infty$-norm, respectively. Moreover, $I$ is the identity matrix in $\mathbb R^p$, and for any subset $S$ of $\{1,\ldots,p\}$, $I_S$ is the diagonal matrix with $1$ on the $j$th diagonals for $j\in S$ and $0$ elsewhere. We use bold letter $\mathbf 1\in\mathbb R^p$ to denote an all-one vector. $\theta_s(\beta)$ denote the $s$-largest component of vector $\beta\in\mathbb R^p$ in absolute value. We use notation $\lesssim$ and $\gtrsim$ to denote $\leq$ and $\geq$ up to some positive multiplicative constant, respectively. For two vectors $u$ and $v$ of the same dimension, we use $a\geq b$ and $a\leq b$ to denote element-wise $\geq$ and $\leq$. Denote $\lambda_{\max}(A)$ and $\lambda_{\min}(A)$ be the maximal and minimal eigenvalues of matrix $A$. Through this document, letters $c$, $c'$ and $c''$ denote some constants whose meaning may change from line to line.

\subsection{Some Useful Results}
In our proof, we will constantly deal with the Hadamard product $u\circ v$ and the operation $(n^{-1}\,X^TXu)\circ v$ for two vectors $u,v\in\mathbb R^p$. 
Therefore, we collect some useful properties in this section, some of which are consequences of the RIP condition. 

The first property regarding the Hadamard product is a direct consequence of the H\"{o}lder inequality.
\begin{lem}\label{lem2}
	For any two vectors $u$ and $v$ in $\mathbb{R}^{p}$,  we have:
	\begin{equation}
	\|u\circ v\| \leq\|u\|\|v\|_\infty.
	\end{equation}
\end{lem}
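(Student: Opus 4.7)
The plan is elementary: I would expand the squared $\ell_2$-norm coordinate-wise and pull out the $\ell_\infty$-bound on $v$ as a uniform factor. Explicitly, I would write
\begin{equation*}
\|u\circ v\|^2 \;=\; \sum_{j=1}^p u_j^2\, v_j^2 \;\leq\; \|v\|_\infty^2 \sum_{j=1}^p u_j^2 \;=\; \|u\|^2\, \|v\|_\infty^2,
\end{equation*}
where the inequality uses the pointwise bound $v_j^2 \leq \|v\|_\infty^2$ for every $j$. Taking square roots of both sides yields $\|u\circ v\| \leq \|u\|\,\|v\|_\infty$, which is the claim. As the sentence preceding the lemma indicates, this is the specialization of H\"older's inequality with conjugate exponents $(1,\infty)$ applied to the nonnegative sequences $\{u_j^2\}_{j=1}^p$ and $\{v_j^2\}_{j=1}^p$.

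There is no real obstacle here: the argument is a two-line computation and is dimension-free, with no constants depending on $p$. The only thing worth flagging is that the inequality is tight (achieved when $v$ is a constant multiple of an indicator vector supported on the coordinate where $|u|$ is concentrated, or more generally whenever $|v|$ is constant on the support of $u$), which will matter later in the paper: whenever one bounds gradient-type expressions of the form $l_t\circ [n^{-1}X^T r_t]$ or $g_t\circ [n^{-1}X^T r_t]$ arising in the dynamics of Algorithms~\ref{alg1}--\ref{alg2}, this lemma lets one transfer the control from the iterate with a controlled sup-norm to the one with a controlled $\ell_2$-norm without losing any factors.
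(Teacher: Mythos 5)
Your argument is correct and is exactly the paper's proof: expand $\|u\circ v\|^2=\sum_j u_j^2v_j^2$, bound $v_j^2\leq\|v\|_\infty^2$ termwise, and take square roots. Nothing further is needed.
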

\begin{proof}
	This follows since $\|u\circ v\|^2 = \sum_{j}u_j^2v_j^2 \leq \|v\|_\infty^2 \sum_{j}u_j^2=\|u\|^2\|v\|_\infty^2$.
\end{proof}
The second lemma shows that under the RIP, the product $(n^{-1}\,X^TXu)\circ v$ can be well-approximated by $u\circ v$ for all sparse vectors $u\in\mathbb R^p$ and any vector $v\in\mathbb R^p$. 
\begin{lem}\label{lem3}
	Let $X$ be a matrix in $\mathbb{R}^{n\times p}$ that satisfies $(s+1,\delta)$-restricted isometry property (see Definition 2.1 in the paper). Then for any $s$-sparse vectors $u$ and any $v$ in $\mathbb{R}^{p} $, we have:
	\begin{equation}
	\|(n^{-1}\,X^TXu)\circ v-u\circ v\|_{\infty} \leq \delta\|u\|_2\|v\|_{\infty}.
	\end{equation}
\end{lem}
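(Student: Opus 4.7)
The plan is to peel off the Hadamard factor $v$ first and then invoke the inner-product form of RIP that was already stated immediately after the RIP definition in the paper. First I would note the coordinate-wise identity
\[
\bigl[(n^{-1}X^TXu)\circ v - u\circ v\bigr]_j \;=\; v_j\cdot \bigl[n^{-1}X^TXu - u\bigr]_j,
\]
which immediately gives
\[
\|(n^{-1}X^TXu)\circ v - u\circ v\|_\infty \;\leq\; \|v\|_\infty \cdot \|n^{-1}X^TXu - u\|_\infty.
\]
This reduces the task to proving $\|n^{-1}X^TXu - u\|_\infty \leq \delta\|u\|$ for every $s$-sparse $u\in\mathbb R^p$.

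For the reduced bound I would write the $j$-th coordinate in inner-product form as
\[
[n^{-1}X^TXu - u]_j \;=\; n^{-1}\langle Xe_j,\, Xu\rangle - \langle e_j,\, u\rangle,
\]
where $e_j$ denotes the $j$-th standard basis vector. Since $e_j$ is $1$-sparse and $u$ is $s$-sparse, the combined support of $e_j$ and $u$ has size at most $s+1$ regardless of whether $j$ lies in the support of $u$. Consequently the inner-product preservation inequality noted right after the RIP definition in the paper applies to the pair $(e_j,u)$, giving
\[
\bigl|n^{-1}\langle Xe_j,\, Xu\rangle - \langle e_j,\, u\rangle\bigr| \;\leq\; \delta\,\|e_j\|\cdot\|u\| \;=\; \delta\|u\|.
\]
Taking the maximum over $j\in\{1,\ldots,p\}$ and combining with the first display yields the claim.

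There is no substantial obstacle: the lemma is essentially a packaging of the inner-product form of RIP together with the elementary bound $\|a\circ v\|_\infty\leq \|a\|_\infty\|v\|_\infty$. The only point that warrants any care is the support count for $e_j+u$, which is what forces the hypothesis to be $(s+1,\delta)$-RIP rather than $(s,\delta)$-RIP. If one prefers a self-contained derivation, the inner-product preservation step itself follows from the plain RIP via the polarization identity $4\langle Xa, Xb\rangle = \|X(a+b)\|^2 - \|X(a-b)\|^2$ applied to appropriately normalized copies of $e_j$ and $u$, whose sum and difference are both $(s+1)$-sparse.
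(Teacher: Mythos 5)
Your proof is correct and follows essentially the same route as the paper's: both reduce the claim coordinate-wise to the inner-product form of RIP applied to a $1$-sparse vector paired with the $s$-sparse $u$, which is exactly where the $(s+1,\delta)$ hypothesis is used. The only cosmetic difference is that you factor $\|v\|_\infty$ out first, whereas the paper keeps the weight inside by applying the bound to the $1$-sparse vector $v_j e_j$ (written there as $e_i^T D(v)$); the two are interchangeable.
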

\begin{proof}
	Let $D(v)$ be the diagonal matrix in $\mathbb{R}^{n\times p}$ with diagonal elements the same as components of $v$ correspondingly. Then $\|(n^{-1}\,X^TXu)\circ v-u\circ v\|_{\infty}$ can be represented as:
	\begin{align*}
	\max_{i=1,2,...,p} |e_i^T D(v) n^{-1}X^TXu-e_i^T D(v) u|,
	\end{align*}
	where $e_i$ is a $p$-dimensional vector whose $i$-th component is $1$ and $0$ elsewhere. Using the fact that $X$ satisfies $(s+1,\delta)$-RIP and $e_i^T D(v)$ is $1$-sparse, we have (see the remark right after Definition 2.1 in the paper):
	\begin{align*}
	\|(n^{-1}\,X^TXu)\circ v-u\circ v\|_{\infty}\leq  \max_{i=1,2,...,p} \delta\|e_i^T D(v)\|\|u\|= \delta\|u\|_2\|v\|_{\infty}.
	\end{align*}
\end{proof}
Our third lemma considers the case when $u$ and $v$ are both arbitrary. 
\begin{lem}\label{lem4}
	Let $X$ be a matrix in $\mathbb{R}^{n\times p}$ that satisfies $(2,\delta)$-restricted isometry property. Then for any vectors $u,\, v\in\mathbb R^p$, we have: 
	\begin{equation}
	\|(n^{-1}\,X^TXu)\circ v-u\circ v\|_{\infty} \leq \delta\|u\|_1\|v\|_{\infty}.
	\end{equation}
\end{lem}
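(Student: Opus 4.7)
The plan is to reduce the arbitrary-$u$ case to the $1$-sparse case by decomposing $u$ coordinate-wise, since $(2,\delta)$-RIP is sufficient to control inner products between any two $1$-sparse vectors. Concretely, I would mimic the structure of the proof of Lemma~\ref{lem3}: for each coordinate $i\in\{1,\ldots,p\}$, write
\[
\big[(n^{-1}X^T X u)\circ v - u\circ v\big]_i \;=\; e_i^T D(v)\,\big(n^{-1}X^T X u - u\big),
\]
where $D(v)$ is the diagonal matrix with entries of $v$. The key observation is that $e_i^T D(v)=v_i\,e_i^T$, which is $1$-sparse with Euclidean norm $|v_i|\le \|v\|_\infty$.

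Next I would expand $u=\sum_{j=1}^p u_j e_j$ using linearity and apply the triangle inequality:
\[
\big|e_i^T D(v)(n^{-1}X^T X u - u)\big|
\;\le\; \sum_{j=1}^p |u_j|\cdot \big|n^{-1}\langle X D(v) e_i,\, X e_j\rangle - \langle D(v) e_i,\, e_j\rangle\big|.
\]
For each $j$, both $D(v)e_i$ and $e_j$ are $1$-sparse, so their supports together have size at most $2$. The $(2,\delta)$-RIP, via the inner-product form noted just after Definition~2.1 (i.e.\ $|n^{-1}\langle Xa,Xb\rangle-\langle a,b\rangle|\le\delta\|a\|\|b\|$ for sparse $a,b$), then bounds each summand by $\delta\,\|D(v)e_i\|\,\|e_j\| = \delta\,|v_i|\le \delta\|v\|_\infty$.

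Combining, $|[(n^{-1}X^T X u)\circ v - u\circ v]_i|\le \delta\|v\|_\infty\sum_{j}|u_j|=\delta\|u\|_1\|v\|_\infty$, and taking the maximum over $i$ yields the claim. I do not anticipate a real obstacle: the only subtlety is checking that the inner-product form of RIP indeed applies to pairs of $1$-sparse vectors whose union of supports has size at most $2$, which is exactly what $(2,\delta)$-RIP guarantees. The rest is bookkeeping with the triangle inequality.
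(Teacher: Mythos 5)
Your proposal is correct and is essentially the paper's own argument: the paper likewise decomposes $u=\sum_j I_j u$, applies the triangle inequality, and uses the $(2,\delta)$-RIP inner-product bound for the $1$-sparse pair $D(v)e_i$ and $I_j u$ to get $\delta|v_i||u_j|$ per term, summing to $\delta\|u\|_1\|v\|_\infty$. No gaps; your explicit check that the inner-product form of RIP applies to $1$-sparse pairs is exactly the remark following Definition~2.1.
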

\begin{proof}
	Since we can decompose $u=\sum_j I_j u$, we have
	\begin{align*}
	\|(n^{-1}\,X^TXu)\circ v-u\circ v\|_{\infty}&=\max_{i=1,2,...,p} |e_i^T D(v)\, n^{-1}X^TXu-e_i^T D(v) u|\\
	&\leq \sum_{j=1}^{p} \max_{i=1,2,...,p} |e_i^T D(v)\, n^{-1}X^TX \, I_j u-e_i^T D(v)\, I_j u| \\
	&\leq \sum_{j=1}^{p} \max_{i=1,2,...,p} \delta \|e_i^T D(v)\| \|u_j\| \\
	&\leq \delta\|u\|_1\|v\|_{\infty}.
	\end{align*}
\end{proof}
Our fourth and fifth lemma consider the concentration behavior about the noise terms.
\begin{lem}\label{lem6}
	$w \sim \mathcal{N}(0, \sigma^2 I_{n \times n})$, all $\ell_2$ norm of column vectors of $X_{n \times s}$ are normalized to $\sqrt{n}$, $s<n$, then with probability $1-e^{-Ms}$ for any $M\geq 1$, such that:
	\begin{equation}
	\frac{1}{n}\|X^\top w\| \leq 3 \sigma  \sqrt{\frac{Ms\,\lambda_{\max}(\frac{X^\top X}{n})}{n}}.
	\end{equation}
\end{lem}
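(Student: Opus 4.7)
The plan is to reduce the claim to a standard chi-squared tail bound by exploiting the Gaussian structure. Since $w\sim \mathcal{N}(0,\sigma^{2}I_{n})$, the vector $X^{\top}w\in\mathbb{R}^{s}$ is a centered Gaussian with covariance $\sigma^{2}X^{\top}X$. Writing $A=X^{\top}X$ and letting $A^{1/2}$ be its symmetric square root, I would represent $X^{\top}w=\sigma A^{1/2}Z$ with $Z\sim\mathcal{N}(0,I_{s})$. Then the elementary operator-norm bound gives
\begin{equation*}
\|X^{\top}w\|^{2}=\sigma^{2}Z^{\top}AZ\;\le\;\sigma^{2}\lambda_{\max}(A)\,\|Z\|^{2},
\end{equation*}
which converts the problem into controlling the squared norm of a standard Gaussian vector in $\mathbb{R}^{s}$.

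Next, I would invoke the Laurent–Massart one-sided deviation inequality for $\chi^{2}_{s}$ random variables, namely $\Pr\bigl(\|Z\|^{2}\ge s+2\sqrt{sx}+2x\bigr)\le e^{-x}$, and set $x=Ms$. Since $M\ge 1$, each of the three terms $s$, $2\sqrt{s\cdot Ms}=2s\sqrt{M}$, and $2Ms$ is bounded by $2Ms$, so with probability at least $1-e^{-Ms}$ one has $\|Z\|^{2}\le 5Ms$ (any constant strictly less than $9$ suffices to recover the stated factor of $3$; the number $5$ comes out automatically, and $\sqrt{5}<3$).

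Combining these two bounds yields
\begin{equation*}
\frac{1}{n^{2}}\|X^{\top}w\|^{2}\;\le\;\frac{5\,\sigma^{2}Ms\,\lambda_{\max}(A)}{n^{2}}\;=\;\frac{5\,\sigma^{2}Ms\,\lambda_{\max}(X^{\top}X/n)}{n},
\end{equation*}
and taking square roots gives the stated inequality with constant $\sqrt{5}\le 3$. I do not foresee a genuine obstacle: the only subtlety is choosing $x=Ms$ in the Laurent–Massart bound and verifying that the three contributions to $s+2\sqrt{sx}+2x$ all collapse to constant multiples of $Ms$ once $M\ge 1$, which keeps the final constant absolute rather than dependent on $M$. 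The normalization assumption on the columns of $X$ is not needed for the argument itself; it only calibrates the scale of $\lambda_{\max}(X^{\top}X/n)$ so that the right-hand side is a meaningful quantity.
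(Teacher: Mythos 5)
Your argument is correct: the representation $X^{\top}w=\sigma A^{1/2}Z$ with $A=X^{\top}X$, the bound $Z^{\top}AZ\le\lambda_{\max}(A)\|Z\|^{2}$, and the Laurent--Massart tail $\Pr\bigl(\|Z\|^{2}\ge s+2\sqrt{sx}+2x\bigr)\le e^{-x}$ at $x=Ms$ (so $\|Z\|^{2}\le 5Ms$ when $M\ge1$) combine cleanly to give $\frac1n\|X^{\top}w\|\le\sqrt{5}\,\sigma\sqrt{Ms\,\lambda_{\max}(X^{\top}X/n)/n}$ with probability at least $1-e^{-Ms}$, which is the claim with room to spare in the constant. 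This is, however, a genuinely different route from the paper: the paper applies the Hanson--Wright inequality directly to the quadratic form $w^{\top}XX^{\top}w$ and bounds the tail through the operator-norm branch of that inequality, in a rather terse one-line display that does not explicitly separate out the mean $\sigma^{2}\operatorname{tr}(XX^{\top})=\sigma^{2}ns$ or the Frobenius-norm branch (these can be absorbed because the column normalization forces $\lambda_{\max}(X^{\top}X/n)\ge1$, which is where that hypothesis actually earns its keep in the paper's version). Your route is more elementary and self-contained --- the exact Gaussian/chi-squared reduction handles the centering automatically and needs only a standard $\chi^{2}_{s}$ deviation bound --- and, as you note, it does not use the column normalization at all, while the paper's Hanson--Wright shortcut is quicker to state but leans on that normalization (and on unstated constants) to make the asserted exponent work out. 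Both give the same $\sqrt{Ms/n}$ rate with an absolute constant.
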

\begin{proof}
	By directly applying Hanson-Wright inequality:
	\begin{equation}
	\begin{aligned}
	\mathbb{P}(	w^\top X X^\top w \geq 9 \sigma^2 n \,Ms \,\lambda_{\max}(\frac{X^\top X}{n})) 
	\leq \exp \left(-\frac{9 \sigma^2 n \,Ms\, \lambda_{\max}(\frac{X^\top X}{n})}{8\sqrt{2} \sigma^2 \lambda_{\max}(X^\top X)}\right) \leq  e^{-Ms}.
	\end{aligned}
	\end{equation}
\end{proof}

\begin{lem}\label{lem5}
	$w \sim \mathcal{N}(0, \sigma^2 I_{n \times n})$, all $\ell_2$ norm of column vectors of $X_{n \times p}$ are normalized to $\sqrt{n}$, then with probability $1-2 p^{-1}$ such that:
	\begin{equation}
	\frac{1}{n}\|X^\top w\|_{\infty} \leq  \sigma \sqrt{\frac{4 \log p}{n}}.
	\end{equation}
\end{lem}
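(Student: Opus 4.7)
The plan is to recognize this as a textbook maximum-of-Gaussians bound and attack it with a one-dimensional Gaussian tail estimate followed by a union bound over the $p$ columns. Since $w \sim \mathcal{N}(0, \sigma^2 I_n)$ and each column $X_j$ is deterministic with $\|X_j\| = \sqrt{n}$, the scalar $X_j^\top w$ is a centered Gaussian whose variance we can compute exactly, so each coordinate of $n^{-1} X^\top w$ has a clean marginal distribution.

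First I would record the marginal: for every $j \in \{1,\ldots,p\}$,
\begin{equation*}
\frac{1}{n} X_j^\top w \sim \mathcal{N}\!\left(0,\, \frac{\sigma^2 \|X_j\|^2}{n^2}\right) = \mathcal{N}\!\left(0,\, \frac{\sigma^2}{n}\right),
\end{equation*}
using the normalization $\|X_j\|^2 = n$. Next I would apply the standard Gaussian tail bound $\Pr(|Z| > t) \leq 2 \exp(-t^2/(2 v))$ for $Z \sim \mathcal{N}(0,v)$, with $v = \sigma^2/n$ and $t = \sigma \sqrt{4 \log p / n}$, which yields
\begin{equation*}
\Pr\!\left(\frac{1}{n}\,|X_j^\top w| > \sigma \sqrt{\frac{4\log p}{n}}\right) \leq 2\exp(-2\log p) = \frac{2}{p^2}.
\end{equation*}
Finally I would take a union bound across $j=1,\ldots,p$ to get the complementary event probability at most $2p\cdot p^{-2} = 2/p$, which is exactly the stated high-probability guarantee.

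There is essentially no obstacle here; the only subtle point is matching the stated constant $4$ inside the square root to the exponent of the union bound, which is handled automatically once one plugs $t = \sigma\sqrt{4\log p/n}$ into the Gaussian tail. If one wanted a slightly sharper constant, one could invoke the Mills-ratio tail $\Pr(|Z|>t) \leq \sqrt{2/\pi}\, (t/\sqrt v)^{-1} \exp(-t^2/(2v))$, but the loose exponential bound already suffices for the claim as stated, so I would not bother refining it.
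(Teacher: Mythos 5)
Your proof is correct and follows essentially the same route as the paper: each coordinate $n^{-1}X_j^\top w$ is $\mathcal{N}(0,\sigma^2/n)$ by the column normalization, a standard Gaussian tail bound gives $2p^{-2}$ per coordinate at $t=\sigma\sqrt{4\log p/n}$, and a union bound over the $p$ columns yields the failure probability $2p^{-1}$. No issues.
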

\begin{proof}
	$\frac{1}{n}\|X^\top w\|_{\infty}$ is the maximum over $p$ normal random variables distributed as $\mathbb{N}(0,\frac{\sigma^2}{n})$, so considering the union bound:
	\begin{align*}
	\mathbb{P}(\frac{1}{n}\|X^\top w\|_{\infty} \geq t) \leq 2 e^{\frac{-nt^2}{2\sigma^2}+\log p},
	\end{align*}
	the result is obtained by taking $t=\sigma \sqrt{\frac{4 \log p}{n}}$.
\end{proof}

%

\subsection{Proofs of Lemmas, Propositions and Theorems in the Paper}
We prove the results in the paper one-by-one. For simplicity, we assume the largest signal strength is at most constant level, which implies $\kappa m \lesssim 1$.

\subsection{Proof of Lemma \ref{Lem:global_min}}
Let $\beta = g\circ l$. Since the least square problem $\min_{\beta\in\mathbb R^p} (2n)^{-1}\|X\beta -y\|^2$ is a convex optimization, it does not have local maximum and any local minimum is a global minimum. 

Now suppose $f(g,l)$ has a local maximum $(\tilde g, \tilde l)$, meaning that there exists an open set $\mathcal B\in\mathbb R^{2p}$ centering at $(\tilde g, \tilde l)$ such for any $(g,l)\in \mathcal B$, $f(g,l) \leq f(\tilde g, \tilde l)$. It is easy to verify that the set $\mathcal B_\beta = \{g\circ l:\, (g,l)\in\mathcal B\}$ is also an open set in $\mathbb R^p$ centering at $\tilde \beta = \tilde g\circ \tilde l$. Consequently, $\tilde\beta$ is also a local maximum of $(2n)^{-1}\|X\beta -y\|^2$, which is a contradiction.

Due to the same argument, any local minimum $(\bar g,\bar l)$ of $f(g,l)$ corresponds to a local minimum $\bar\beta =\bar g\circ \bar l$ of $(2n)^{-1}\|X\beta -y\|^2$. Since all local minimums of $(2n)^{-1}\|X\beta -y\|^2$ are global and satisfies $X^T\big(X\bar\beta-y\big) = 0$, $(\bar g,\bar l)$ must be a global minimum of $f(g,l)$ and satisfies 
$X^T\big(X(\bar g\circ \bar l\,)-y\big) = 0$.

Now let us prove the last part concerning saddle points. According to the previous paragraph and the discussion after lemma \ref{Lem:global_min}, a point $(g^\dagger,l^\dagger)$ is a saddle point (that is, neither local minimum nor local maximum) if and only if there exists some $j_0\in\{1,\ldots,p\}$ such that
\begin{align*}
g^\dagger_{j_0} = l^\dagger_{j_0}=0\qquad\mbox{and}\qquad R^\dagger_{j_0}\neq 0,
\end{align*}
where $R^\dagger = X^T\big(X(g^\dagger\circ l^\dagger) - y\big)\in\mathbb R^p$ (since otherwise this saddle point would satisfy $X^T\big(X(\bar g\circ \bar l\,)-y\big) = 0$, which is the sufficient and necessary condition for a global minimum).
For any vector $u\in\mathbb R^p$, we use Diag$(u)$ to denote the $p$-by-$p$ diagonal matrix whose diagonals are components of $u$.
By direct calculations, we can express the $2p$-by-$2p$ Hessian matrix $\nabla^2_{g,l} f(g^\dagger,l^\dagger)$ of $f(g,l)$ at $(g^\dagger,l^\dagger)$ as
\begin{align*}
\begin{pmatrix}
\mbox{Diag}(l^\dagger) & 0\\
0 & \mbox{Diag}(g^\dagger)
\end{pmatrix}
\begin{pmatrix}
n^{-1}X^TX & n^{-1}X^TX\\
n^{-1}X^TX & n^{-1}X^TX
\end{pmatrix}
\begin{pmatrix}
\mbox{Diag}(l^\dagger) & 0\\
0 & \mbox{Diag}(g^\dagger)
\end{pmatrix}
\ \ +\ \
\begin{pmatrix}
0 & \mbox{Diag}(R^\dagger)\\
\mbox{Diag}(R^\dagger) & 0
\end{pmatrix}.
\end{align*}
Consequently, it is easy to verify that 
\begin{align*}
\big(e_{j_0}^T, \  -\mbox{sgn}(R^\dagger_{j_0}) \,e_{j_0}^T\big) \,\nabla^2_{g,l} f(g^\dagger,l^\dagger) 
\begin{pmatrix}
e_{j_0}\\ -\mbox{sgn}(R^\dagger_{j_0})\, e_{j_0}
\end{pmatrix} =-2 (R^\dagger_{j_0})^2 <0,
\end{align*}
where $e_{j}$ denotes the vector whose $j$-th component is one and zero elsewhere for $j=1,\ldots,p$, and sgn$(\cdot)$ is the sign function. Consequently, there exists some nonzero vector $u$ such that the evaluation of the quadratic form $u^T\nabla^2_{g,l} f(g^\dagger,l^\dagger)u <0$, implying $\lambda_{\min}\big(\nabla^2_{g,l} f(g^\dagger,l^\dagger)\big)<0$.

\subsection{Proof of Lemma~\ref{Lem:GD_global_min}}
According to lemma \ref{Lem:global_min}, all saddle points of $f(g,l)$ are strict, and all local minimums are global. 
Therefore, we can apply Theorem 4 in \cite{lee2016gradient} to finish the proof, which states that for any twice continuously differentiable function, if it has only strict saddle points,
then gradient descent with a random initialization and sufficiently small constant step size almost surely converges to a local minimizer.

\subsection{Proof of Proposition \ref{pro2.1}}
Recall the decomposition of $u_t$ as
\begin{align*}
u_t=z_t+d_t+e_t, \quad\mbox{with}\quad
z_t:=I_{S_1} u_t\in\mathbb R^p \\
\quad\mbox{and}\quad d_t:= I_{S_2} u_t\in\mathbb R^p, \quad
e_t:=(I-I_{S})u_t\in\mathbb R^p.
\end{align*}
Moreover, we have the updating rule $u_{t+1} =u_t-2\eta \, n^{-1}X^T(Xu_t^2-y)\circ u_t$. Therefore, the corresponding updating rules for strong signal component $z_t$, weak signal component $d_t$ and error component $e_t$ are
\begin{align*}
z_{t+1} = z_t - 2\eta\,n^{-1}X^T(Xu_t^2-y)\circ z_t,\\
d_{t+1} = d_t - 2\eta\,n^{-1}X^T(Xu_t^2-y)\circ d_t,\\
e_{t+1} = e_t - 2\eta\,n^{-1}X^T(Xu_t^2-y)\circ e_t.
\end{align*}
Also recall the following conditions on the step size $\eta$, and RIP constant $\delta$: 
$\eta\lesssim 1/(\kappa \log(p/\alpha))$, and $\delta \lesssim 1/(\kappa s^{1/2} \log (p/\alpha))$. All the following analysis is based on the conditional events: 
\begin{equation}
\frac{1}{n}\|X_{S_1}^\top w\| \leq c \sigma  \sqrt{\frac{Ms_1}{n}} \quad \mbox{and} \quad \frac{1}{n}\|X^\top w\|_{\infty} \leq  \sigma \sqrt{\frac{4 \log p}{n}} ,
\end{equation}  
where the first concentration is based on lemma~\ref{lem6} and the RIP of $X^\top X/n$.
We will use induction to prove that for all time $t<T_1=\Theta(\frac{ \log (p/\alpha)}{\eta m})$ in the first stage,
\begin{align}
\|e_t\|_{\infty}\lesssim 1/p, \quad   \|d_t\|_{\infty}\lesssim 1/p \label{eq2.1.1},  \\
\|z_t\|_{\infty}\lesssim 1 \label{eq2.1.2},\\
\|z_t^2-I_{S_1} \beta^\ast\|\lesssim \sqrt{s} \label{eq2.1.3},
\end{align}
where recall that $m$ is the smallest absolute value of components of $I_{S_1} \beta^\ast$, $s_1$ is the sparsity of $I_{S_1} \beta^\ast$, and $\kappa$ is the ratio of largest and smallest of $I_{S_1} \beta^\ast$, $\kappa m \lesssim 1$ for simplicity.
When $t=0$, we have $u_0=\alpha \mathbf{1}$, $z_0=\alpha I_S \mathbf{1}$, and $e_0=\alpha I_{S^c} \mathbf{1}$. Therefore, by choosing $\alpha\leq c/p$ for some sufficiently small constant $c>0$, we can ensure $\|e_0\|_{\infty}\lesssim 1/p$, and $\|z_0\|_{\infty}\lesssim 1$. Now suppose for time $t<T_1=\Theta(\frac{ \log (p/\alpha)}{\eta m})$, we have \eqref{eq2.1.1}-\eqref{eq2.1.3}. We will show that these inequalities also hold for time $t+1$.

Our strategy is to show both signal part $\|z_t\|_{\infty}$, $\|d_t\|_{\infty}$ and error part $\|e_t\|_{\infty}$ increases in an exponential rate when $\theta_{s_1}(z_t)\leq \frac{\sqrt{m}}{2}$. In addition, if $t+1<T_1$, then we show that induction hypothesis equations \eqref{eq2.1.1}--\eqref{eq2.1.3} still hold. 
In particular, we divide the proof into 3 steps:

\subsubsection*{Step 1: Analyzing Error Dynamics and Weak Signals}
The proposition below gives us the dynamics of the error terms, we can use this dynamics to see the infinity norm of the error terms remains in the order of $O(1/p)$ in the first stage when $t< T_1$.
\begin{pro}\label{apro2.1}
	Under assumptions in theorem~\ref{thm1} and the induction hypothesis~\eqref{eq2.1.1}-\eqref{eq2.1.3} at $t$, when $t<T_1$ we have:
	\begin{align*}
	\|e_{t+1}\|_\infty&\leq (1+c \eta \, m/\log(p/\alpha))\|e_t\|_\infty, \\
	\|d_{t+1}\|_\infty&\leq (1+c \eta \, m/\log(p/\alpha))\|d_t\|_\infty,
	\end{align*}
	when $\|z_t^2-\beta^\ast\| \leq \sqrt{\epsilon}$, then let $\tau=\max\{\delta \alpha, \sigma \sqrt{\frac{\log p}{n}}\}$: 
	\begin{align*}
	\|e_{t+1}\|_\infty&\leq (1+c \,\eta \tau)\|e_t\|_\infty. \\
	\|d_{t+1}\|_\infty&\leq (1+c \,\eta \tau)\|d_t\|_\infty.
	\end{align*}
\end{pro}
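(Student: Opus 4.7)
The plan is to analyze the coordinate-wise multiplicative updates of $e_t$ and $d_t$ that arise from the Hadamard structure. Because $z_t$, $d_t$, $e_t$ have disjoint supports $S_1$, $S_2$, $S^c$ respectively, we have $u_t^2 = z_t^2 + d_t^2 + e_t^2$, and the update $e_{t+1} = e_t - 2\eta\, r_t \circ e_t$ factors componentwise as $e_{t+1,j} = (1 - 2\eta r_{t,j})\,e_{t,j}$, where $r_t = n^{-1}X^T(Xu_t^2-y)$. Thus the task reduces to producing a uniform $\ell_\infty$ bound on $r_{t,j}$ for $j\in S^c$ (and, in parallel, for $j\in S_2$ to handle $d_t$): once we show $|r_{t,j}|\lesssim m/\log(p/\alpha)$ we immediately obtain the first claim, and once we show $|r_{t,j}|\lesssim \tau$ we obtain the second.

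To bound $r_{t,j}$, I would plug in $y = X\beta^\ast + w$ and decompose
\begin{equation*}
r_t = n^{-1}X^TX(z_t^2-I_{S_1}\beta^\ast) + n^{-1}X^TX(d_t^2-I_{S_2}\beta^\ast) + n^{-1}X^TX\, e_t^2 - n^{-1}X^T w .
\end{equation*}
For $j\notin S_1$ the first summand is an off-support coordinate of an $s_1$-sparse vector, so Lemma \ref{lem3} (the RIP inner-product bound applied with $v=e_j$) gives $\delta\|z_t^2-I_{S_1}\beta^\ast\|\lesssim \delta\sqrt{s}$ under the induction hypothesis \eqref{eq2.1.3}. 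The second summand is handled the same way, using $\|d_t^2-I_{S_2}\beta^\ast\|\lesssim \sqrt{s_2}\,\sigma\sqrt{\log p/n}$ from $\|d_t\|_\infty\lesssim 1/p$ and the weak-signal definition. The $e_t^2$ term is not sparse, so I invoke Lemma \ref{lem4} instead to obtain the bound $\delta\|e_t^2\|_1\lesssim \delta/p$ plus the diagonal contribution $e_{t,j}^2\lesssim 1/p^2$. Finally, the noise term is controlled by Lemma \ref{lem5}, giving $\|n^{-1}X^Tw\|_\infty\lesssim \sigma\sqrt{\log p/n}$ with the stated probability.

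Collecting these four bounds, the assumption $\delta\lesssim 1/(\kappa\sqrt{s}\log(p/\alpha))$ together with $\kappa m\lesssim 1$ yields $\delta\sqrt{s}\lesssim m/\log(p/\alpha)$, and the strong-signal definition $m\geq 2\sigma\log p\,\sqrt{\log p/n}$ combined with $\log(p/\alpha)\gtrsim \log p$ (from $\alpha\lesssim 1/p$) yields $\sigma\sqrt{\log p/n}\lesssim m/\log(p/\alpha)$. Hence $|r_{t,j}|\lesssim m/\log(p/\alpha)$ uniformly, and since the step size satisfies $\eta\lesssim (\kappa\log(p/\alpha))^{-1}$, the factor $|1-2\eta r_{t,j}|$ lies in $[0,\,1+c\eta m/\log(p/\alpha)]$, which is the first claim. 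The weak-signal case $j\in S_2$ is identical except that $r_{t,j}$ gains an additional on-support contribution $(d_{t,j}^2-\beta^\ast_j)$ of order $\sigma\sqrt{\log p/n}$, absorbed into the same bound.

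For the refined bound, once $\|z_t^2-\beta^\ast\|\leq \sqrt{\epsilon}$ we may upgrade the first summand's bound from $\delta\sqrt{s}$ to $\delta\sqrt{\epsilon}$, and by the definition $\epsilon=\max\{\alpha^2,\sigma^2 Ms_1/n,\sigma^2 s_2\log p/n\}$ together with $\delta\lesssim 1/(\kappa\sqrt{s}\log(p/\alpha))$, each term in the decomposition is $\lesssim \tau=\max\{\delta\alpha,\sigma\sqrt{\log p/n}\}$, giving the tighter $(1+c\eta\tau)$ factor. The main obstacle I anticipate is bookkeeping: one must verify that every term in the $r_{t,j}$ decomposition is simultaneously $\lesssim m/\log(p/\alpha)$ (resp.~$\lesssim \tau$), which requires carefully chaining the RIP hypothesis, the weak/strong signal definitions, and the initialization scale $\alpha\lesssim p^{-1}$, without hidden circularity in the induction on $\|e_t\|_\infty,\|d_t\|_\infty\lesssim 1/p$.
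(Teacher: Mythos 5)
Your proposal follows essentially the same route as the paper's proof: the same decomposition of $n^{-1}X^T(Xu_t^2-y)$ into the sparse signal part, the weak-signal part, the $e_t^2$ part and the noise, controlled respectively by Lemma~\ref{lem3}, Lemma~\ref{lem4} and Lemma~\ref{lem5}, with the disjoint-support facts $e_t\circ(z_t^2+d_t^2-\beta^\ast)=0$ and $d_t\circ(z_t^2-I_{S_1}\beta^\ast)=0$ and the induction hypotheses doing the rest, so the coordinatewise view $e_{t+1,j}=(1-2\eta r_{t,j})e_{t,j}$ is just the paper's $\|e_t\circ r_t\|_\infty\leq\|e_t\|_\infty\|r_t\|_\infty$ bound in disguise. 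One bookkeeping caution: your chain ``$\delta\lesssim 1/(\kappa\sqrt{s}\log(p/\alpha))$ together with $\kappa m\lesssim 1$ yields $\delta\sqrt{s}\lesssim m/\log(p/\alpha)$'' uses $\kappa m\lesssim 1$ in the wrong direction (it would need $\kappa m\gtrsim 1$); the paper instead bounds the signal residual by $\sqrt{s}\,\kappa m$ so that $\delta\sqrt{s}\,\kappa m\lesssim m/\log(p/\alpha)$, and similarly your step $\sigma\sqrt{\log p/n}\lesssim m/\log(p/\alpha)$ needs $\log(p/\alpha)\lesssim\log p$ (i.e.\ $\alpha$ not super-polynomially small), not merely $\log(p/\alpha)\gtrsim\log p$.
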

\begin{proof}
	For $\|e_{t+1}\|_\infty$,	we bound the term $n^{-1}X^T(Xu_t^2-y) \circ e_t$ in the updating formula:
	\begin{align*}
	&\|n^{-1}X^T(Xu_t^2-y) \circ e_t\|_{\infty}= \|e_t\circ \left(n^{-1}X^T\left(X(u_t^2-\beta^\ast)-w\right)\right)\|_\infty \\
	&\stackrel{(i)}{=} \|e_t\circ \left( (n^{-1}X^TX(z_t^2+d_t^2-\beta^\ast))-(z_t^2+d_t^2-\beta^\ast)+n^{-1}X^TX(e_t^2)-n^{-1}X^T w \right)\|_\infty \\
	&\stackrel{(ii)}{\leq} \|e_t\|_{\infty} \left(\delta\|z_t^2+d_t^2-\beta^\ast\|+\|(e_t^2)\|_{\infty}+\delta \|(e_t^2)\|_1+ c \sigma \sqrt { \frac { \log  p  } { n } } \right) \\
	&\stackrel{(iii)}{\leq}\|e_t\|_{\infty}(c \frac{m}{\log (p/\alpha)}),
	\end{align*} 
	where in step~$(i)$ we use the fact that $e_t\circ (z_t^2+d_t^2-\beta^\ast)=0$; step~$(ii)$ follows by lemma~\ref{lem3}:
	\begin{align*}
	\|e_t\circ  \left((n^{-1}X^TX(z_t^2+d_t^2-\beta^\ast))-(z_t^2+d_t^2-\beta^\ast)\right)\|_{\infty} \leq \|e_t\|_{\infty} \left(\delta\|z_t^2+d_t^2-\beta^\ast\|\right),
	\end{align*}
	lemma~\ref{lem4}:	
	\begin{align*}
	\|e_t\circ  \left(n^{-1}X^TX(e_t^2)-e_t^2+e_t^2 \right)\|_{\infty} \leq \|e_t\|_{\infty} \left(\delta\|e_t^2\|_1+\|(e_t^2)\|_{\infty}\right),
	\end{align*}
	and lemma ~\ref{lem5}:
	
	\begin{align*}
	\|e_t\circ  \left(n^{-1}X^Tw \right)\|_{\infty} \leq \|e_t\|_{\infty} c\sigma \sqrt { \frac { \log  p  } { n } }.
	\end{align*}
	In step~$(iii)$, we applied the induction hypothesis~\eqref{eq2.1.1}. Finally, note that $\delta\sqrt{s} \kappa m \lesssim \frac{m}{\log(p/\alpha)}$ and $\sigma \sqrt { \frac { \log  p  } { n } } \lesssim   m/\log(p/\alpha)$ by the minimal strength of $m$, then an application of the triangle inequality leads to the claimed bound on $\|e_{t+1}\|_\infty$.\par 
	Similarly, for $\|d_{t+1}\|_\infty$, 
	\begin{align*}
	&\|n^{-1}X^T(Xu_t^2-y) \circ d_t\|_{\infty}= \|d_t\circ \left(n^{-1}X^T\left(X(u_t^2-\beta^\ast)-w\right)\right)\|_\infty \\
	&\stackrel{(i)}{=}\|d_t\circ \left((n^{-1}X^TX(z_t^2+d_t^2-\beta^\ast))-(z_t^2+d_t^2-\beta^\ast)+(d_t^2-I_{S_2}\beta^{\ast})+n^{-1}X^TX(e_t^2)-n^{-1}X^T w \right)\|_\infty \\
	&\stackrel{(ii)}{\leq} \|d_t\|_{\infty} \left(\delta\|z_t^2+d_t^2-\beta^\ast\|+\|d_t^2-I_{S_2}\beta^{\ast}\|_{\infty}+\delta \|(e_t^2)\|_1+ c\sigma \sqrt { \frac { \log  p  } { n } } \right) \\
	&{\leq}\|d_t\|_{\infty}(c \, m/\log(p/\alpha)),
	\end{align*} 
	where step~$(i)$ follows by $d_t \circ (z_t^2- I_{S_1} \beta^\ast)=0$; step~$(ii)$ follows from the orthogonality between $d_t$ and $e_t$. The last inequality follows from the induction hypothesis. \par
	Then, when $\|z_t^2+d_t^2-\beta^\ast\| \leq \sqrt{\epsilon}$:
	\begin{align*}
	\|n^{-1}X^T(Xu_t^2-y) \circ e_t\|_{\infty}&\leq\|e_t\|_{\infty} \left(\delta\|z_t^2+d_t^2-\beta^\ast\|+\|(e_t^2)\|_\infty+\delta \|(e_t^2)\|_1+ c\sigma \sqrt { \frac { \log  p  } { n } } \right) \\
	&\leq c \tau \|e_t\|_{\infty},
	\end{align*} 
	where the last inequality follows from $\delta \cdot \max \{ \alpha, \sigma \sqrt{\frac{s \log p}{n}}\} \leq \max \{ \delta  \alpha, \sigma \sqrt{\frac{\log p}{n}} \}$ and $\delta p \alpha^2 \lesssim \delta \alpha$.
	\begin{align*}
	\|n^{-1}X^T(Xu_t^2-y) \circ d_t\|_{\infty}&\leq \|d_t\|_{\infty} \left(\delta\|z_t^2+d_t^2-\beta^\ast\|+\|d_t^2-I_{S_2}\beta^{\ast}\|_{\infty}+\delta \|(e_t^2)\|_1+ \sigma \sqrt { \frac { \log  p  } { n } } \right) \\
	&\leq c \tau \|d_t\|_{\infty},
	\end{align*} 
	which is also based on $\|d_t^2-I_{S_2}\beta^{\ast}\|_{\infty} \leq c \max\{\alpha^2,\sigma \sqrt { \frac { \log  p  } { n } }\} $, since $\|d_t\| \leq c \alpha$ and the signals in set $S_2$ are weak.
\end{proof}

\subsubsection*{Step 2: Analyzing Strong Signal Dynamics}
The proposition below tells us the first-stage convergence of the strong signal terms, which are much faster than the growing of the weak signal and error terms, under the condition on $\delta$ in Theorem~\ref{thm1}. 

\begin{pro}\label{apro2.2}
	Under assumptions in theorem~\ref{thm1} and the induction hypothesis~\eqref{eq2.1.1}-\eqref{eq2.1.3} at $t$, when $\theta_{s_1}(z_t)\leq \frac{\sqrt{m}}{2}$ and $t<T_1$, we have
	\begin{align*}
	\theta_{s_1}(z_{t+1})&\geq (1+\frac{\eta m}{4})\theta_{s_1}(z_t)
	\end{align*}
\end{pro}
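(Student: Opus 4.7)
The plan is to analyze the per-coordinate update
\[
z_{t+1,j} \;=\; z_{t,j}\bigl[\,1 + 2\eta(\beta^*_j - z_{t,j}^2) + 2\eta\,\varepsilon_{t,j}\,\bigr],\qquad j\in S_1,
\]
obtained from $z_{t+1,j}=z_{t,j}(1-2\eta r_{t,j})$ by isolating the driving term $\beta^*_j-z_{t,j}^2$ and collecting everything else into $\varepsilon_{t,j}$. The remainder $\varepsilon_{t,j}$ absorbs the RIP perturbations for the $s_1$-sparse $z_t^2-I_{S_1}\beta^*$ and $s_2$-sparse $d_t^2-I_{S_2}\beta^*$, the dense $e_t^2$, and the stochastic term $-[n^{-1}X^Tw]_j$; since $d_{t,j}=e_{t,j}=0$ for $j\in S_1$, no leading contribution is lost. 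Using Lemma 6.3 on the sparse pieces, Lemma 6.4 on $e_t^2$, Lemma 6.5 on the noise, and the induction hypotheses $\|z_t^2-I_{S_1}\beta^*\|\lesssim\sqrt{s}$ and $\|d_t\|_\infty,\|e_t\|_\infty\lesssim 1/p$, I would deduce $|\varepsilon_{t,j}|\lesssim \delta\sqrt{s}+\sigma\sqrt{\log p/n}$. Assumption (B) combined with the strong-signal floor $m\gtrsim \sigma\log p\sqrt{\log p/n}$ from Assumption (A) then forces $|\varepsilon_{t,j}|\leq m/16$, say.

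Letting $j^{**}$ attain $\theta_{s_1}(z_{t+1})=|z_{t+1,j^{**}}|$, I would split on the size of $z_{t,j^{**}}^2$. If $z_{t,j^{**}}^2\leq \beta^*_{j^{**}}/2$, then $\beta^*_{j^{**}}-z_{t,j^{**}}^2\geq m/2$, and the bracket above is at least $1+\eta m-2\eta|\varepsilon_{t,j^{**}}|\geq 1+7\eta m/8$; positivity of $z_{t+1,j^{**}}$ is inherited from $u_0=\alpha\mathbf 1>0$ and from the bracket being positive under Assumption (C), so $|z_{t+1,j^{**}}|\geq(1+7\eta m/8)|z_{t,j^{**}}|\geq(1+\eta m/4)\,\theta_{s_1}(z_t)$. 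If instead $z_{t,j^{**}}^2>\beta^*_{j^{**}}/2$, then $|z_{t,j^{**}}|>\sqrt{m/2}$, and I would first verify a one-step no-overshoot claim, $z_{t,j^{**}}^2<\beta^*_{j^{**}}$, which follows from $4\eta\beta^*_j<1$ (guaranteed by Assumption (C) together with $\beta^*_j\leq\kappa m\lesssim 1$). The bracket is then at least $1-2\eta|\varepsilon_{t,j^{**}}|$, yielding
\[
|z_{t+1,j^{**}}|\;\geq\;\sqrt{m/2}\,\bigl(1-2\eta|\varepsilon_{t,j^{**}}|\bigr)\;=\;\sqrt{2}\,\cdot\,(\sqrt{m}/2)\,\bigl(1-2\eta|\varepsilon_{t,j^{**}}|\bigr),
\]
and since $\theta_{s_1}(z_t)\leq\sqrt{m}/2$, the factor $\sqrt{2}$ easily dominates $(1+\eta m/4)/(1-2\eta|\varepsilon_{t,j^{**}}|)$ for the step size specified by Assumption (C), so $|z_{t+1,j^{**}}|\geq(1+\eta m/4)\,\theta_{s_1}(z_t)$.

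I expect the main obstacle to be this second regime: a component that has already grown past $\sqrt{\beta^*_{j^{**}}/2}$ cannot provide ``exponential growth'' in the usual sense, and care is needed to show that it cannot plunge below $(1+\eta m/4)\theta_{s_1}(z_t)$ in a single gradient step. The argument hinges on two ingredients: the no-overshoot property in stage one (using $\eta\beta^*_j\ll 1$), and the factor-$\sqrt{2}$ slack between the two thresholds $\sqrt{m/2}$ and $\sqrt{m}/2$. Combining the two cases then gives $\theta_{s_1}(z_{t+1})\geq(1+\eta m/4)\theta_{s_1}(z_t)$, completing the induction step.
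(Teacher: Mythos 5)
Your argument is correct in substance and reaches the paper's bound by a different bookkeeping of the order statistic. Both proofs share the same analytic core: the RIP linearization of the residual (your $\varepsilon_{t,j}$ is exactly the paper's bound $\|n^{-1}X^T(Xu_t^2-y)-(z_t^2-I_{S_1}\beta^\ast)\|_\infty\lesssim m/\log(p/\alpha)$, obtained from Lemma~\ref{lem3}, Lemma~\ref{lem4}, Lemma~\ref{lem5} and the induction hypotheses) and the multiplicative per-coordinate update. Where you diverge is in how $\theta_{s_1}$ is tracked. The paper never selects an extremal coordinate: it writes $z_{t+1}$ in the relative-error form $z_{t+1,j}(1-\xi_{t,j})=z_{t,j}(1-2\eta z_{t,j}^2)(1+2\eta\beta^\ast_j)$ with $\|\xi_t\|_\infty\lesssim\eta m/\log(p/\alpha)$, and uses monotonicity of $x\mapsto x(1-2\eta x^2)$ on the range of $z_t$ (its identity~\eqref{eqlem1}) so the $s_1$-th order statistic passes through the map; the quadratic damping is then evaluated at $\theta_{s_1}(z_t)\le\sqrt{m}/2$, giving $(1+2\eta m)(1-\eta m/2)$ up to the $\xi_t$ correction, with no case split and no concern about coordinates that have already grown or overshot. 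You instead pick $j^{**}$ attaining $\theta_{s_1}(z_{t+1})$ (implicitly using that $z_t$ and $z_{t+1}$ are supported exactly on $S_1$ and stay strictly positive, so $\theta_{s_1}$ is the minimum over $S_1$ and $|z_{t,j^{**}}|\ge\theta_{s_1}(z_t)$ — state this) and split on whether $z_{t,j^{**}}^2\le\beta^\ast_{j^{**}}/2$, exploiting the $\sqrt{2}$ gap between $\sqrt{m/2}$ and $\sqrt{m}/2$. This is a more elementary route and it does work; the paper's monotone-map identity buys uniformity (no cases), while your version makes the growth mechanism per coordinate more transparent.

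The one soft spot is your case 2. The one-step no-overshoot claim $z_{t,j^{**}}^2<\beta^\ast_{j^{**}}$ does not follow merely from $4\eta\beta^\ast_j<1$: because of the $\varepsilon_{t,j}$ term the iterate can sit slightly above $\beta^\ast_j$ (by an excess of order $|\varepsilon_{t,j}|$), so the bracket need not be $\ge 1-2\eta|\varepsilon_{t,j^{**}}|$. Fortunately you do not need no-overshoot at all: by the induction hypothesis~\eqref{eq2.1.2} ($\|z_t\|_\infty\lesssim 1$) and Assumption (C) ($\eta\lesssim 1/(\kappa\log\frac{p}{\alpha})$), the bracket is at least $1-2\eta\big(\|z_t\|_\infty^2+|\varepsilon_{t,j^{**}}|\big)\ge 1-c\eta$, and $(1-c\eta)\sqrt{2}\ge 1+\eta m/4$ comfortably, so the $\sqrt{2}$ slack already closes case 2; replace the no-overshoot step by this bound. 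Finally, your claim $|\varepsilon_{t,j}|\le m/16$ requires $\delta\sqrt{s}\lesssim m$, which under Assumption (B) amounts to the normalization that the largest signal (hence $\kappa m$) is of constant order; this is the same implicit convention the paper uses at the corresponding step (its $\delta\sqrt{s}\,\kappa m\lesssim m/\log(p/\alpha)$), so it is acceptable, but it deserves an explicit sentence. Note also that the sharper $m/\log(p/\alpha)$ control is only needed for the compounding error/weak-signal dynamics, not here, so your coarser $m/16$ suffices for this proposition.
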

\begin{proof}
	First,  applying lemma~\ref{lem3} with $u= z_t^2-I_{S_1} \beta^\ast$ (which is $s_1$-sparse) and $v=\mathbf 1\in\mathbb R^p$, we have
	\begin{align}
	&\|n^{-1}X^T\left(Xu_t^2-y\right)-(z_t^2-I_{S_1}\beta^\ast)\|_{\infty} \nonumber\\
	=&\,\|n^{-1}X^TX(z_t^2-I_{S_1}\beta^\ast)-(z_t^2-I_{S_1}\beta^\ast) + n^{-1}X^TX (d_t^2+e_t^2-I_{S_2}\beta^*)-n^{-1}X^T w\|_\infty\nonumber\\
	\leq&\,\delta\|z_t^2-I_{S_1} \beta^\ast\| +\|n^{-1}X^TX (d_t^2-I_{S_2}\beta^*)\|_{\infty}+  \|n^{-1}X^TX e_t^2\|_{\infty}+\|n^{-1}X^T w\|_\infty \nonumber \\
	\stackrel{(i)}{\lesssim}&  m/\log(p/\alpha) \label{eq2.3.1},
	\end{align}
	where step $(i)$ follows from induction hypothesis~\eqref{eq2.1.1} and ~\eqref{eq2.1.3}, and our condition on $\delta$ and $m$. Let $z_{t+1}^+$ be the element-wise pseudo-inverse (meaning $1/0=0$) of $z_{t+1}$. 
	
	First, we show $\|z_{t+1}^+\circ z_t\|_{\infty}\lesssim 1$. 
	By rewriting the updating rule of $z_t$ as
	\begin{align}\label{Eqn:Zt}
	z_{t+1}&= z_t\circ(\mathbf{1}-2\eta \,n^{-1}X^T\left(X(u_t^2-\beta^\ast)-w\right)), 
	\end{align}
	we have the element-wise lower bound 
	\begin{align*}
	z_{t+1}&\geq z_t \circ(\mathbf{1}-2\eta (z_t^2-I_{S_1}\beta^\ast)) -c\,\eta\, \frac{m}{\log(p/\alpha)} |z_t|,
	\end{align*}
	where we applied \eqref{eq2.3.1}.
	Since $\eta \leq 1/4$ and $m\leq 1$ under the assumptions of the proposition, the preceding display implies that each component of $z_{t+1}$ will larger than $1/4$ times the corresponding component of $z_{t}$ in absolute value, which implies $\|z_t \circ z_{t+1}^+\|_{\infty}\leq 4\lesssim 1$.
	
	Using the updating rule~\eqref{Eqn:Zt} for $z_t$, equation~\eqref{eq2.3.1}, and the induction hypothesis~\eqref{eq2.1.2}, we have:
	\begin{equation}
	\|z_{t+1}-z_t \circ(\mathbf{1}-2\eta (z_t^2-I_{S_1}\beta^\ast))\|_{\infty}\lesssim \eta \frac{m}{\log(p/\alpha)} |z_t|. \label{eq2.3.2}
	\end{equation} 
	Since $\|z_t \circ z_{t+1}^+\|_{\infty}\leq 4$, by conducting Hadamard product with $z_{t+1}^+$ on both sides, we obtain:
	\begin{equation*}
	\|\mathbf{1}-(\mathbf{1}-2\eta (z_t^2-I_{S_1}\beta^\ast))\circ z_{t+1}^+\circ z_t\|_{\infty}\lesssim \eta \frac{m}{\log(p/\alpha)}.
	\end{equation*}
	\begin{equation*}
	\|\mathbf{1}-(\mathbf{1}-2\eta z_t^2)(\+1+2\eta I_{S_1} \beta^\ast))\circ z_{t+1}^+\circ z_t\|_{\infty}\lesssim \eta \frac{m}{\log(p/\alpha)} +\eta^2 \kappa^2 m^2.
	\end{equation*}
	We write 
	\begin{equation*}
	z_{t+1}-z_t \circ(\mathbf{1}-2\eta z_t ^2)(\mathbf{1}+2\eta I_{S_1}\beta^\ast)=\xi_t z_{t+1},
	\end{equation*}
	where $\xi_t=\mathbf{1}-(\mathbf{1}-2\eta z_t^2)(I+2\eta I_{S_1}\beta^\ast)\circ z_{t+1}^+\circ z_t $, where we drop the $\eta^2$ term since $\eta \kappa \lesssim \frac{1}{\log(p/\alpha)}$ and $\kappa m \lesssim 1$, using the display after equation~\eqref{eq2.3.2}, we have the following bound on $\xi_t$,
	\begin{equation*}
	\|\xi_t\|_{\infty} \lesssim \eta \frac{m}{\log(p/\alpha)}.
	\end{equation*}
	
	Consider the function $f(x)=x(1-2\eta \,x^2)$, which is nondecreasing for $x\in[-(6\eta)^{-1}, (6\eta)^{-1}]$. Therefore, when $\|z_t\|_{\infty}\lesssim 1$ and $\eta  \leq 1/6$ (by choosing the constant $c_3$ in Theorem~\ref{thm1} in the paper sufficiently small), we must have
	\begin{equation} \label{eqlem1}
	\theta_{s_1}(z_t\circ(\mathbf{1}-2\eta (z_t^2)))=\theta_{s_1}(z_t)(1-2\eta \theta_{s_1}(z_t)^2).
	\end{equation}
	The last three displays together imply that when $\theta_{s_1}(z_t)\leq\frac{\sqrt{m}}{2}$, 
	\begin{align*}
	\theta_{s_1}(z_{t+1})&\stackrel{(i)}{\geq} \frac{(1+2\eta m)\theta_{s_1}(z_t)(1-2\eta \theta_{s_1}(z_t)^2)}{1+c\eta \frac{m}{\log(p/\alpha)}} \\
	&\geq (1+2\eta m)(1-2\eta \theta_{s_1}(z_t)^2)\theta_{s_1}(z_t)(1-c\eta \frac{m}{\log(p/\alpha)}) \\
	&\stackrel{(ii)}{\geq} (1+\frac{\eta m}{4})\theta_{s_1}(z_{t}), &&       
	\end{align*}
	where in step $(i)$ we used the definition of $m$ as $\theta_{s_1}(\beta^\ast)$, and step $(ii)$ follows from the assumption $\theta_{s_1}(z_t)\leq\frac{\sqrt{m}}{2}$ and our conditions on $(\eta,m)$, so that
	\begin{align*}
	(1+2\eta m)(1-\frac{\eta m}{2})&=(1+\frac{3\eta m}{2}-\eta^2m^2)
	\stackrel{(iii)}{\geq} 1+\eta m, \quad\mbox{and}\\
	(1+\eta m)(1-c\eta \frac{m}{\log(p/\alpha)})&=(1+\eta m-c\eta \frac{m}{\log(p/\alpha)}-c\eta^2 \frac{m^2}{\log(p/\alpha)})\\
	&\stackrel{(iv)}{\geq} (1+\frac{\eta m}{2}-\frac{\eta^2 m^2}{2})\stackrel{(v)}{\geq} (1+\frac{\eta m}{4}), 
	\end{align*}
	where step $(iii)$ follows from $\eta m \leq 1/2$, step $(iv)$ by $\frac{c}{\log(p/\alpha)} \leq 1/2$, and in step $(v)$ we used $\eta m \leq1/2$ and $\frac{c}{\log(p/\alpha)} \leq 1/2$ again. \par
\end{proof}

\subsubsection*{Step 3: Prove Induction Hypothesis}
Our last piece shows that the induction hypothesis is kept for $t+1$. This proposition combined with propositions~\ref{apro2.1}-\ref{apro2.2} leads to a proof for proposition \ref{pro2.1} in the paper.
\begin{pro}\label{apro2.3}
	Under assumptions in theorem~\ref{thm1} and the induction hypothesis~\eqref{eq2.1.1}-\eqref{eq2.1.3} at $t<T_1$, we have:
	\begin{align*}
	&\|e_{t+1}\|_{\infty}\lesssim 1/p, \quad \|d_{t+1}\|_{\infty}\lesssim 1/p, \quad \|z_{t+1}\|_{\infty}\lesssim 1,  
	&\quad\mbox{and}\quad \|z_{t+1}^2-I_{S_1} \beta^\ast\|\lesssim \sqrt{s}.
	\end{align*}
\end{pro}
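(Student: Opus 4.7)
The plan is to verify each of the four induction-hypothesis bounds at time $t+1$ in sequence, leveraging Proposition \ref{apro2.1} for the error and weak-signal parts and a direct coordinate-wise analysis for the strong-signal parts.

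First, for $\|e_{t+1}\|_\infty \lesssim 1/p$ and $\|d_{t+1}\|_\infty \lesssim 1/p$, I would chain the one-step multiplicative growth estimate from Proposition \ref{apro2.1} back to $t=0$. Since $\|e_0\|_\infty = \|d_0\|_\infty = \alpha \lesssim 1/p$ and each iteration inflates $\|\cdot\|_\infty$ by a factor of at most $1 + c\eta m/\log(p/\alpha)$, any $t+1 \leq T_1 = c_7\log(p/\alpha)/(\eta m)$ yields cumulative growth bounded by $(1 + c\eta m/\log(p/\alpha))^{T_1} \leq e^{cc_7}$, a universal constant that can be absorbed into the $\lesssim 1/p$ bound by choosing $c_7$ sufficiently small.

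Second, for $\|z_{t+1}\|_\infty \lesssim 1$, I would exploit the coordinate-wise update $z_{t+1,j} = z_{t,j}(1 - 2\eta r_{t,j})$ for $j \in S_1$, where by the RIP-based decomposition \eqref{eq2.3.1} we have $r_{t,j} = (z_{t,j}^2 - \beta^\ast_j) + \tilde\epsilon_{t,j}$ with $|\tilde\epsilon_{t,j}| \lesssim m/\log(p/\alpha)$. The aim is to establish an invariant region of the form $z_{t,j}^2 \leq C\beta^\ast_j$. The argument splits by phase: when $z_{t,j}^2 < \beta^\ast_j$ the multiplier is bounded by $1 + 2\eta\beta^\ast_j$ and can only inflate $|z_{t,j}|$ mildly; once $z_{t,j}^2 \geq \beta^\ast_j$, the main part of $r_{t,j}$ becomes non-negative, while the step-size condition $\eta \lesssim 1/(\kappa\log(p/\alpha))$ together with $\beta^\ast_{\max} \lesssim \kappa m \lesssim 1$ forces $2\eta r_{t,j} \leq 1$, so $(1 - 2\eta r_{t,j})^2 \leq 1$ and $|z_{t+1,j}|$ does not grow. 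Since $z_{0,j}^2 = \alpha^2 \ll \beta^\ast_j$ for every $j \in S_1$, this confines $|z_{t+1,j}|$ to at most a universal multiple of $\sqrt{\beta^\ast_j} \lesssim 1$. Finally, for $\|z_{t+1}^2 - I_{S_1}\beta^\ast\| \lesssim \sqrt{s}$, since the difference is supported on $S_1$ with $|S_1| = s_1 \leq s$ and coordinate-wise bounded by $\|z_{t+1}\|_\infty^2 + \|\beta^\ast\|_\infty \lesssim 1$, the $\ell_2$ norm is at most $\sqrt{s_1}$ times the $\ell_\infty$ norm, which gives the claim.

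The main technical subtlety is the transition region $z_{t,j}^2 \approx \beta^\ast_j$ in the strong-signal analysis, where the sign of $r_{t,j}$ depends delicately on the perturbation $\tilde\epsilon_{t,j}$. The key remedy is that a single step can inflate $z_{t,j}^2$ by at most a factor $1 + O(\eta)$ regardless of the sign of $r_{t,j}$, while the strictly growing phase must terminate once $z_{t,j}^2$ exceeds $\beta^\ast_j$ by more than the $O(m/\log(p/\alpha))$ perturbation; hence no cumulative exponential blow-up can push $\|z_t\|_\infty$ beyond a universal constant over the $T_1$ iterations of Stage one.
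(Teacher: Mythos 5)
Your proposal is correct and follows essentially the same route as the paper: the error and weak-signal bounds are obtained exactly as in the paper by chaining the one-step factor $(1+c\eta m/\log(p/\alpha))$ over at most $T_1$ iterations, and your phase-split invariant $z_{t,j}^2\lesssim \beta^\ast_j$ is just a more explicit rendering of the paper's observation that the cubic damping term makes the update non-expanding once $|z_{t,j}|$ exceeds a constant, while a single step can only inflate it by $1+O(\eta)$. The final $\ell_2$ bound via sparsity plus coordinate-wise boundedness likewise matches the paper's triangle-inequality step.
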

When $T_1<t<T_2$, we also have:
\begin{align*}
\|e_{t+1}\|_{\infty} \lesssim 1/p, \quad \|d_{t+1}\|_{\infty}\lesssim 1/p.
\end{align*}
\begin{proof}
	For error component $e_{t+1}$, we have:
	\begin{align*}
	\|e_{t+1}\|_{\infty}&\leq (1+c\,\eta m/\log(p/\alpha))\|e_t\|_{\infty} 
	\stackrel{(i)}{\leq} (1+c/T_1)^{T_1}\|e_0\|_{\infty}  \\
	&\leq e^c \|e_0\|_{\infty} \lesssim \alpha \lesssim 1/p,
	\end{align*}
	where in step $(i)$ we apply $\frac{m}{ \log (p/\alpha)} \lesssim 1/(\eta T_1)$. When $\|z_t^2-\beta^\ast\| \leq \sqrt{\epsilon}$, since $T_2=c/(\eta \tau)$, when $T_1<t<T_2$:
	\begin{align*}
	\|e_{t+1}\|_{\infty}&\leq (1+c\,\eta \tau )\|e_t\|_{\infty} 
	\stackrel{(i)}{\leq} (1+c/T_2)^{T_2} \cdot \alpha \\
	&\leq e^c \cdot \alpha \lesssim \alpha \lesssim 1/p.
	\end{align*}
	Similarly we can show the induction hypothesis for weak signals $\|d_{t+1}\|$.
	For strong signal component $z_{t+1}$, we apply equation~\eqref{Eqn:Zt} to get element-wise upper bound:
	\begin{align*}
	z_{t+1,i}&\leq z_{t,i} (1-2\eta (z_{t,i}^2-\beta^\ast_i)) +c\,\eta\, \delta\,\sqrt{s}\, z_{t,i} ,
	\end{align*}
	Since $\delta \sqrt{s}\lesssim 1$, and the function $f(x)=2\eta x^3-c\eta x$ is nonnegative when $x\gtrsim 1$. Therefore, as long as $\|z_t\|_\infty \gtrsim 1$, we always have
	$\|z_{t+1}\|_{\infty}\leq \|z_t\|_{\infty}$ , thus
	\begin{align*}
	\|z_{t+1}^2-\beta^\ast\|&\leq  \|z_{t+1}^2\|+\|\beta^\ast\| \lesssim \sqrt{s}.
	\end{align*}
\end{proof}

\subsection{Proof of Proposition \ref{pro2.4}}
\begin{proof}
	We decompose $\|u_{t+1}^2-\beta^\ast\|^2=\|(u_t-\eta \nabla f(u_t))\circ(u_t-\eta \nabla f(u_t))-\beta^\ast\|^2$ into 5 parts, according to the order of $\eta$:
	\begin{align*}
	\|u_{t+1}^2-\beta^\ast\|^2=\|u_{t}^2-\beta^\ast\|^2-4\eta\langle \nabla f(u_t)\circ u_{t},u_{t}^2-\beta^\ast\rangle\\
	+4\eta^2 \langle \nabla f(u_t)^2,u_{t}^2-\beta^\ast\rangle+2\eta^2 \|f(u_t)\circ u_{t}\|^2\\
	-4\eta^3 \langle \ \nabla f(u_t)\circ u_t, \nabla f(u_t)^2\rangle+\eta^4 \|\nabla f(u_t)^2\|^2.
	\end{align*} 
	In addition, we have the decomposition,
	\begin{align}\label{decomp}
	\|u_{t+1}^2-\beta^\ast\|^2=\|z_{t+1}^2-I_{S_1}\beta^\ast\|^2 +\|d_{t+1}^2-I_{S_2}\beta^\ast\|^2+\|e_t^2\|^2 \nonumber\\
	\leq \|z_{t+1}^2-I_{S_1}\beta^\ast\|^2+ c s_2  \frac{\log p}{n}+c' p \alpha^4,
	\end{align}
	where the rate are based on the dynamics for weak signals and error terms, so we only need to consider the estimation errors on the index set $S_1$. \par
	Before we bound each term, we need the following bound for the gradient:
	\begin{align}
	\|\nabla f(u_t)\|_{\infty}&=\|2u_t\circ n^{-1}X^T(Xu_t^2-y)\|_{\infty} \nonumber\\
	&\leq c (\|n^{-1}X^TX(u_t^2-\beta^\ast)\|_{\infty} +\|X^Tw\|_\infty) \nonumber \\
	&=c (\|n^{-1}X^TX(z_t^2+d_t^2-\beta^\ast +e_t^2)\|_{\infty}+\|X^Tw\|_\infty) \nonumber\\
	&\stackrel{(i)}{\leq} c(\|z_t^2+d_t^2-\beta^\ast\|_{\infty}+\delta \|z_t^2+d_t^2-\beta^\ast\|+\|n^{-1}X^TX (e_t)^2\|_{\infty}+\|X^Tw\|_\infty) \nonumber \\ &\stackrel{(ii)}{\lesssim} 1 \label{eq2.4.1} ,
	\end{align}
	where (i) follows by lemma~\ref{lem3} and lemma~\ref{lem4} with $v=\mathbf 1\in\mathbb R^p$, and (ii) follows by the assumption that $(z_t,d_t,e_t)\in\Theta_{LR}$, and
	\begin{align} \label{eq:15}
	&\|n^{-1}X^TX (e_t)^2\|_{\infty}\leq \|e_t^2\|_\infty+\delta \|e_t^2\|_1 \leq c \delta p \alpha^2 \nonumber \\
	&\|n^{-1}(X^TX(d_t^2-I_{S_2} \beta^*))\|_\infty \leq  \|d_t^2-I_{S_2} \beta^*\|_\infty+\delta \|d_t^2-I_{S_2} \beta^*\|_2 \leq  c\alpha^2 +  c'\sigma \sqrt{\frac{\log p}{n}}           \nonumber \\
	&\|n^{-1}(X^TX(d_t^2-I_{S_2} \beta^* +e_t^2)-X^Tw)\|_{\infty}\leq c\delta p \alpha^2 +c'\sigma \sqrt{\frac{\log p}{n}}.
	\end{align}
	Note that $z_t$ is $s_1\leq s$ sparse, we have the core error term characterization:
	\begin{equation} \label{eq:16}
	\begin{aligned}
	\|z_t \circ n^{-1}(X^TX (d_t^2-I_{S_2} \beta^* +e_t^2)-X^Tw)\|^2 &\leq c (Ms_1) \delta^2 p^2 \alpha^4 +c'  \sigma^2 \frac{Ms_1}{n}\\
	&\leq c(\alpha^2+\sigma^2 \frac{Ms_1}{n}),
	\end{aligned}
	\end{equation}
	where we used $\delta^2 s \leq 1$, $\|e_t\|_{\infty}\lesssim \alpha \lesssim 1/p$ and lemma~\ref{lem4}  with $v=\mathbf 1\in\mathbb R^p$ and lemma ~\ref{lem6}. 
	
	In the rest of the proof, we will repeatedly apply lemma~\ref{lem3}, \ref{lem4} to get rid of the $n^{-1}X^TX$ term;  we will also repeatedly use lemma~\ref{lem2} and the assumption $(z_t,d_t,e_t)\in\Theta_{LR}$ to bound  each term in the decomposition of $\|u_{t+1}^2-\beta^\ast\|^2$. For quantifying the errors, we will use equation~\eqref{eq:15} and~\eqref{eq:16}. Note that based on the decomposition~\eqref{decomp}, we only need to care about terms contributed by the index set $S_1$.
	
	First, we need a lower bound for the order one term,
	\begin{align*}
	&\langle \nabla f(u_t)\circ u_{t},I_{S_1} (u_{t}^2-\beta^\ast)\rangle = \langle \nabla f(u_t)\circ z_{t},z_{t}^2-I_{S_1} \beta^\ast\rangle \\
	&\geq\langle n^{-1}X^TX(z_t^2- I_{S_1}\beta^\ast)\circ z_t^2,z_t^2- I_{S_1}\beta^\ast\rangle \\
	&-\|z_t \circ n^{-1}(X^TX (d_t^2-I_{S_2} \beta^* +e_t^2)-X^Tw)\|\|z_t \circ (z_t^2-I_{S_1}\beta^\ast)\|\\
	&\geq \|z_t \circ (z_t^2-I_{S_1}\beta^*)\|^2-\delta  \|z_t^2-I_{S_1}\beta^*\|^2\\
	&-\|z_t \circ n^{-1}(X^TX (d_t^2-I_{S_2} \beta^* +e_t^2)-X^Tw)\|^2/2-\|z_t \circ (z_t^2-I_{S_1}\beta^\ast)\|^2/2, \\
	&\geq \frac{1}{2}\|z_t \circ (z_t^2-I_{S_1}\beta^*)\|^2-\delta    \|z_t^2-I_{S_1}\beta^*\|^2-c(\alpha^2+\sigma^2 \frac{Ms_1}{n}), 
	\end{align*} 
	where the second inequality is based on Cauchy-Schwarz inequality and the last step follows by equation~\eqref{eq:16}.  The rest of the terms are also based on the bound:
	\begin{align*}
	\|I_{S_1} \nabla f(u_t)\|^2 &= \|z_t \circ n^{-1}X^T(Xu_t^2-y)\|^2 \\
	&\stackrel{(i)}{\leq} 2\|z_t \circ n^{-1}X^TX(z_t^2-I_{S_1} \beta^\ast)\|^2 
	+ 2\|z_t \circ n^{-1}(X^TX (d_t^2-I_{S_2}\beta^*+e_t^2)-X^Tw)\|^2 \\ 
	&\stackrel{(ii)}{\leq} c \|z_t^2-I_{S_1}\beta^\ast\|^2+c'(\alpha^2+\sigma^2 \frac{Ms_1}{n}),
	\end{align*}
	where in step (i) we use induction hypothesis and Cauchy-Schwarz inequality,  and step (ii) we use the bound for equation~\eqref{eq:16}. \\
	For the order two term, we apply a similar idea to bound $\| \langle I_{S_1}\nabla f(u_t)^2,u_t^2-\beta^\ast\rangle\|$ and $\|I_{S_1} f(u_t)^2\|^2$. More precisely, since $\|u_t^2-\beta^\ast\|_{\infty}\lesssim 1$, we have
	\begin{align*}
	\langle \nabla f(u_t)^2,I_{S_1} (u_{t}^2-\beta^\ast) \rangle = \langle \nabla f(u_t)\circ (u_{t}^2-\beta^\ast) ,\nabla I_{S_1} f(u_t)\rangle \\\leq c\|I_{S_1} \nabla f(u_t)\|^2 \leq c \|z_t^2-I_{S_1}\beta^\ast\|^2+c'(\alpha^2+\sigma^2 \frac{Ms_1}{n}).
	\end{align*}
	Similarly $\|u_t\|_\infty \lesssim 1$,  we bound $\|I_{S_1}\nabla f(u_t)\circ u_{t}\|^2$ through
	\begin{align*}
	\|I_{S_1} \nabla f(u_t)\circ u_{t}\|^2 \leq c\|I_{S_1}\nabla f(u_t)\|^2 \leq c \|z_t^2-I_{S_1} \beta^\ast\|^2+c' (\alpha^2+\sigma^2 \frac{Ms_1}{n}).
	\end{align*}

	For the order three term, using the fact that $\|u_{t}\|_{\infty}\lesssim 1$ and $\|\nabla f(u_t)\|_{\infty}\lesssim 1$, we have:
	\begin{align*}
	\langle I_{S_1} \nabla  f(u_t)\circ u_t, \nabla f(u_t)^2\rangle \leq c \|\nabla f(u_t)\|^2
	\leq c\|I_{S_1} \nabla f(u_t)\|^2 \\
	\leq c \|z_t^2-I_{S_1}\beta^\ast\|^2+c'(\alpha^2+\sigma^2 \frac{Ms_1}{n}).
	\end{align*}

	For the order four term, we have 
	\begin{equation*}
	\|I_{S_1} \nabla  f(u_t)^2\|^2\lesssim \|I_{S_1} \nabla f(u_t)\|^2 \lesssim c\|z_t^2-I_{S_1}\beta^\ast\|^2+c'(\alpha^2+\sigma^2 \frac{Ms_1}{n}).
	\end{equation*}
	
	Putting all pieces together, since $\eta \lesssim 1$ under our assumption on $\eta$, we get
	\begin{align*}
	\|z_{t+1}^2-I_{S_1} \beta^\ast\|^2 &\leq \|z_{t}^2-I_{S_1} \beta^\ast\|^2- 2 \eta \|(z_t^2-I_{S_1} \beta^\ast)\circ z_t\|^2 \\
	&+c\eta (\alpha^2+\sigma^2 \frac{Ms_1}{n}) +
	4 \delta \eta\|z_t^2-I_{S_1} \beta^\ast\|^2,
	\end{align*} 
	with $\|z_t^2-I_{S_1} \beta^\ast\|^2\cdot m/4\leq \|(z_t^2-I_{S_1} \beta^\ast)\circ z_t\|^2$.
	Since $\theta_{s}(z_t)\geq \sqrt{m}/2$ for $(z_t,d_t,e_t)\in\Theta_{LR}$, and
	$\delta \kappa  \leq \frac{1}{4}$,
	
	under our assumptions on $(\eta,\delta)$, we reach
	\begin{equation*}
	\|z_{t+1}^2-I_{S_1} \beta^\ast\|^2\leq(1-\eta m)\|z_{t}^2-I_{S_1} \beta^\ast\|^2+c(\alpha^2+\sigma^2 \frac{Ms_1}{n}) .
	\end{equation*} 
\end{proof}

\subsection{Proof of Theorem \ref{thm1}} 
\begin{proof}
	As mentioned after the theorem in the paper, we divide the convergence into two stages. In the first ``burn-in'' stage, we show that the smallest component of the strong signal part $z_t$ increases at an exponential rate in $t$ until hitting $\sqrt{m}/2$, 
	\begin{align*}
	\theta_{s_1}(z_{t+1})&\geq (1+\frac{\eta m}{4})\theta_{s}(z_t), \ \text{when}\  \theta_{s_1}(z_t)\leq \sqrt{m}/2.
	\end{align*}
	In the second stage, after iterate $u_t$ enters $\Theta_{LR}$, we have the geometric convergence up to some high-order error term
	\begin{align*}
	\|z_{t+1}^2-I_{S_1} \beta^\ast\|^2&\leq(1-{\eta m})\|z_{t}^2-I_{S_1}\beta^\ast\|^2+c (\alpha^2+\sigma^2 \frac{Ms_1}{n}),  \ \text{when}\  \theta_{s_1}(z_t)\geq \sqrt{m}/2,
	\end{align*}
	When $t\leq \Theta(\log\frac{\sqrt{m}}{\alpha}/(\eta m))$, the convergence is in the first stage; and proposition~\ref{apro2.1} implies that the iterate $u_t$ enters $\Theta_{LR}$ (corresponding to the second stage) in at most $\Theta(\log\frac{c}{p \alpha^2}/(\eta m))$ iterations.  proposition~\ref{apro2.1} and proposition~\ref{pro2.4} together imply for any $t=\Theta(\log\frac{p}{\alpha}/(\eta m))$, 
	\begin{equation}
	\|z_t^2-I_{S_1} \beta^\ast\|^2 \leq c (\alpha^2+\sigma^2 \frac{Ms_1}{n}).
	\end{equation}
	For any $T_2>t>T_1$, since $\|e_t\|, \|d_t\| \lesssim 1/p$ is still controlled, combined with bound~\eqref{decomp}, we have:
	\begin{align*}
	\|u_t^2-\beta^\ast\|^2 \leq c (\alpha^2+\sigma^2 \frac{Ms_1}{n} + \sigma^2 \frac{s_2 \log p}{n}).
	\end{align*}
\end{proof}

\subsection{Proof of Proposition \ref{pro3.1}}

Similar to the proof for the nonnegative case, we use induction to show that for each $t\leq T_1$,
\begin{align}
\|a_{t,S_1^c}\|_{\infty}\lesssim 1/p,\ &&\|b_{t,S_1^c}\|_{\infty}\lesssim 1/p \label{eq3.1.1},\\
\|a_{t,S_1}\|_{\infty}\lesssim 1, &&\|b_{t,S_1}\|_{\infty}\lesssim 1 \label{eq3.1.2},\\
\|\beta_{t,S_1}-\beta^\ast_{S_1}\|\lesssim \sqrt{s} \label{eq3.1.3},
\end{align}
where the set $S_1^c$ is the union of weak signals and errors. When $t=0$, we have $g_0=\alpha \mathbf{1}$, $l_0=0$. Therefore, under the assumption $\alpha\lesssim 1/p$, we have $\|a_{0,S_1^c}\|_{\infty}\lesssim 1/p $, $\|a_{0,S_1}\|_{\infty}\lesssim 1$, and similar bounds for $b$. Now suppose for time $t<T_1$, we have \eqref{eq3.1.1}-\eqref{eq3.1.3}. We divide into 3 steps to show the same bounds hold for time $t+1$. Note that the following analysis is still based on the conditional events: 
\begin{equation}
\frac{1}{n}\|X_{S_1}^\top w\| \leq c \sigma \sqrt{\frac{Ms_1}{n}} \quad \mbox{and} \quad \frac{1}{n}\|X^\top w\|_{\infty} \leq  \sigma \sqrt{\frac{4 \log p}{n}} ,
\end{equation}  
still the first concentration is based on lemma~\ref{lem6} and the RIP of $X^\top X/n$.
\subsubsection*{Step 1: Analyzing Error Dynamics}
\begin{pro}\label{apro3.1}
	Based on RIP and the constant assumption in theorem~\ref{thm2} and the induction hypothesis~\eqref{eq3.1.1}-\eqref{eq3.1.3}, when $t<T_1$ we have:
	\begin{align*}
	\|a_{t+1,S_1^c}\|_{\infty}&\leq (1+c\eta m/\log(p/\alpha))\|a_{t+1,S_1^c}\|_{\infty},\\
	\|b_{t+1,S_1^c}\|_{\infty}&\leq (1+c\eta m/\log(p/\alpha))\|b_{t+1,S_1^c}\|_{\infty}.
	\end{align*}
	When $\|\beta_{t,S_1}-\beta_{S_1}^\ast\| \leq c \sqrt{\epsilon}$, let $\tau =\max \{\sqrt{\frac{\log p}{n}},  \delta \alpha\}$, then
	\begin{align*}
	\|a_{t+1,S_1^c}\|_{\infty}&\leq (1+c\eta\tau)\|a_{t+1,S_1^c}\|_{\infty},\\
	\|b_{t+1,S_1^c}\|_{\infty}&\leq (1+c\eta\tau)\|b_{t+1,S_1^c}\|_{\infty}.
	\end{align*}
\end{pro}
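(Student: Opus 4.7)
The plan is to mirror the argument that established Proposition~\ref{apro2.1} in the nonnegative case, replacing the role of the single iterate $u_t$ with the pair $(a_t, b_t)$ and exploiting the almost identical multiplicative structure of the two update rules. Writing $r_t = n^{-1}X^T(X\beta_t-y)\in\mathbb R^p$, the updates give $a_{t+1,S_1^c} = a_{t,S_1^c}\circ(\mathbf 1-\eta\,r_{t,S_1^c})$ and $b_{t+1,S_1^c} = b_{t,S_1^c}\circ(\mathbf 1+\eta\,r_{t,S_1^c})$, so in both cases the task reduces to upper-bounding $\|a_{t,S_1^c}\circ r_t\|_\infty$ and $\|b_{t,S_1^c}\circ r_t\|_\infty$ in terms of $\|a_{t,S_1^c}\|_\infty$ and $\|b_{t,S_1^c}\|_\infty$ respectively. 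Since only the sign flips between the two, the two bounds follow from one and the same estimate on $r_t$.

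To control $r_t$, I would decompose $r_t = n^{-1}X^TX(\beta_{t,S_1}-\beta^\ast_{S_1}) + n^{-1}X^TX(\beta_{t,S_2}-\beta^\ast_{S_2}) + n^{-1}X^TX\,\beta_{t,S^c} - n^{-1}X^Tw$, where $\beta_{t,j}=a_{t,j}^2-b_{t,j}^2$. The first piece is $s_1$-sparse, so Lemma~\ref{lem3} (against $v=a_{t,S_1^c}$) converts $n^{-1}X^TX$ into the identity up to error $\delta\|\beta_{t,S_1}-\beta^\ast_{S_1}\|\cdot\|a_{t,S_1^c}\|_\infty$, and the identity contribution vanishes because $a_{t,S_1^c}$ and $\beta_{t,S_1}-\beta^\ast_{S_1}$ have disjoint supports. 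For the $S_2$ piece one writes the identity contribution as $(\beta_{t,S_2}-\beta^\ast_{S_2})\circ a_{t,S_1^c}$ and bounds its $\ell_\infty$ norm by $(\|\beta_{t,S_2}\|_\infty+\|\beta^\ast_{S_2}\|_\infty)\|a_{t,S_1^c}\|_\infty$, using the induction hypothesis $\|a_{t,S^c_1}\|_\infty,\|b_{t,S^c_1}\|_\infty\lesssim 1/p$ and the definition of a weak signal to see this is $\mathcal O(\alpha^2+\sigma\sqrt{\log p/n})\cdot\|a_{t,S_1^c}\|_\infty$; the $\delta$-correction from Lemma~\ref{lem4} contributes $\delta\|\beta_{t,S_2}-\beta^\ast_{S_2}\|_1\|a_{t,S_1^c}\|_\infty$. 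For the $S^c$ piece, Lemma~\ref{lem4} yields $\delta\|\beta_{t,S^c}\|_1\|a_{t,S_1^c}\|_\infty\lesssim \delta p\alpha^2\|a_{t,S_1^c}\|_\infty$. The noise contribution is handled by Lemma~\ref{lem5}, giving $\sigma\sqrt{\log p/n}\cdot\|a_{t,S_1^c}\|_\infty$. Collecting terms, using $\|\beta_{t,S_1}-\beta^\ast_{S_1}\|\lesssim\sqrt{s_1}$ from \eqref{eq3.1.3}, together with the standing assumptions $\delta\lesssim 1/(\kappa\sqrt s\log(p/\alpha))$ and $\alpha\lesssim 1/p$, all four pieces combine to $\mathcal O(m/\log(p/\alpha))\cdot\|a_{t,S_1^c}\|_\infty$, which yields the first displayed bound; the bound for $b$ is identical. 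For the refined bound, replace the use of $\|\beta_{t,S_1}-\beta^\ast_{S_1}\|\lesssim\sqrt{s_1}$ by the sharper $\lesssim\sqrt{\epsilon}$, at which point the RIP contribution becomes $\delta\sqrt{\epsilon}\lesssim\delta\alpha+\sigma\sqrt{\log p/n}$, i.e.\ $\mathcal O(\tau)$, and the noise and error contributions are absorbed into the same $\tau$.

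The main obstacle I anticipate is the $\beta_{t,S^c}$ contribution, which is genuinely non-sparse: its $\ell_1$-norm is at most $p\|a_{t,S^c}\|_\infty^2+p\|b_{t,S^c}\|_\infty^2$, so a naive bound picks up a factor $p\alpha^2$, and only the careful interplay $\delta p\alpha^2\lesssim 1/\log(p/\alpha)$ under the stated RIP and initialization conditions keeps this small enough. A minor but conceptually important subtlety is that the argument must treat $a$ and $b$ simultaneously: the quantity $\beta_t=a_t^2-b_t^2$ appearing in $r_t$ couples the two, so the induction hypothesis \eqref{eq3.1.2}--\eqref{eq3.1.3} must be used jointly for both sequences to produce a bound that is symmetric in $a$ and $b$, which in turn makes the otherwise sign-asymmetric updates yield the same geometric growth factor.
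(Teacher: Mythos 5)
Your proposal is correct and follows essentially the same route as the paper: the paper's own proof simply writes the updates as $a_{t+1,S_1^c}=a_{t,S_1^c}\circ(\mathbf 1-\eta r_{t,S_1^c})$, $b_{t+1,S_1^c}=b_{t,S_1^c}\circ(\mathbf 1+\eta r_{t,S_1^c})$ and notes that, after the triangle inequality, the sign difference is immaterial, so the bound on $\|a_{t,S_1^c}\circ r_t\|_\infty$ (and likewise for $b$) is obtained exactly as in Proposition~\ref{apro2.1} via Lemmas~\ref{lem3}, \ref{lem4} and \ref{lem5} under the induction hypotheses, which is precisely the decomposition you carry out. Your splitting of the sparse part into separate $S_1$ and $S_2$ pieces, and your use of $\|\beta_{t,S_1}-\beta^\ast_{S_1}\|\lesssim\sqrt{\epsilon}$ to get the refined $\mathcal O(\tau)$ factor, match the paper's argument up to trivial reorganization.
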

\begin{proof}
	By the definition of $a_t$ and $b_t$, we have the their updating rules:
	\begin{align*}
	a_{t+1,S_1^c}=a_{t,S_1^c}-\eta a_{t,S_1^c}\circ n^{-1}X^T(X \beta_t-y),\\
	b_{t+1,S_1^c}=b_{t,S_1^c}+\eta b_{t,S_1^c}\circ n^{-1}X^T(X \beta_t-y).
	\end{align*} 
	The proofs of the two claimed bound follows exactly the same lines as in the nonnegative case, since we uses the triangle inequality $\|a_{t+1,S_1^c}\|_\infty \leq \|a_{t,S_1^c}\|_\infty + \|\eta a_{t,S_1^c}\circ n^{-1}X^T(X \beta_t-y)\|_\infty$ at the beginning, which does not change when the sign becomes positive as in the second line of $b_{t+1,S^c}$.
	Consequently, the dynamics of weak signals and error terms $a_{t,S_1^c}$ and $b_{t,S_1^c}$ will behave like $e_t$ in the nonnegative case---the proof of proposition~\ref{apro2.1} still works when $\|a_{t,S_1^c}\|_{\infty}$, $\|b_{t,S_1^c}\|_{\infty}\lesssim 1/p$.
\end{proof}

\subsubsection*{Step 2: Analyzing Signal Dynamics}
The proof becomes more complicated for the signal dynamics. Our goal is to show:
\begin{itemize}
	\item  if the true signal $\beta_i^\ast$ is positive and $\beta_{t,i}<\beta_i^*/2$, then $a_{t,i}^2$ increases and $b_{t,i}^2$ decreases, both in an exponential rate. Overall, the sign of $i$th component $\beta_{t,i}=a_{t,i}^2-b_{t,i}^2$ of $\beta_t$ tends to grow to positive in an exponential rate;
	\item if the true signal $\beta_i^\ast$ is negative and $\beta_{t,i}>\beta_i^*/2$, then $a_{t,i}^2$ decreases and $b_{t,i}^2$ increases, both in an exponential rate.  Overall, the sign of $i$th component $\beta_{t,i}=a_{t,i}^2-b_{t,i}^2$ of $\beta_t$ tends to fall to negative in an exponential rate.
\end{itemize}
Our analysis will be based on the resemblance between update rules of $(a_t,b_t)$ and $u_t$ in the nonnegative case. Recall that we assumed that the RIP constant $\delta\lesssim 1/(\kappa \sqrt{s}\log\frac{p}{\alpha})$, step size $\eta\lesssim 1/(\kappa \log\frac{p}{\alpha})$, and $T_1=\Theta(\frac{ \log(p/\alpha)}{\eta m})$.
\begin{pro}\label{apro3.2}
	Under assumptions in theorem~\ref{thm2} and the induction hypothesis~\eqref{eq3.1.1}-\eqref{eq3.1.3} at $t<T_1$,  when $|\beta_{t,i}-\beta^\ast_{t,i}|\leq \frac{1}{2}|\beta_i^*|$, then for any $i\in S$:
	\begin{align*}
	\beta_{t,i}\geq (1+c\,\eta\,\beta^\ast_{i})^t \alpha^2-c'\alpha^2,\ \ \text{if} \ \beta^\ast_{i}>0;\\
	\beta_{t,i}\leq (1-c\,\eta\,\beta^\ast_{i})^t (-\alpha^2)+c'\alpha^2,\ \ \text{if}\  \beta^\ast_{i}<0.
	\end{align*}
\end{pro}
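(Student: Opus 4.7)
The plan is to mirror the strategy of Proposition~\ref{apro2.2} but in the reparametrized coordinates $a_t=(g_t+l_t)/2$, $b_t=(g_t-l_t)/2$, where $\beta_t=a_t^2-b_t^2$. The key feature is that the updates
\begin{align*}
a_{t+1,i}=a_{t,i}(1-\eta\, r_{t,i}),\qquad b_{t+1,i}=b_{t,i}(1+\eta\, r_{t,i}),\qquad r_t:= n^{-1}X^T(X\beta_t-y),
\end{align*}
are multiplicative, so the signs of $a_{t,i}$ and $b_{t,i}$ are preserved as long as $\eta|r_{t,i}|<1$. Since $a_{0,i}=b_{0,i}=\alpha/2>0$ for every $i$, the question reduces to a one-dimensional race: for $\beta^{\ast}_i>0$ we want $a_{t,i}^2$ to grow exponentially while $b_{t,i}^2$ stays bounded by its initial value, and symmetrically for $\beta^{\ast}_i<0$.

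First I would pin down $r_{t,i}$ for $i\in S_1$ via the decomposition
\begin{align*}
r_{t,i}=\bigl[n^{-1}X^TX(\beta_{t,S_1}-\beta^{\ast}_{S_1})\bigr]_i+\bigl[n^{-1}X^TX\beta_{t,S_1^c}\bigr]_i-\bigl[n^{-1}X^Tw\bigr]_i.
\end{align*}
By the induction hypothesis~\eqref{eq3.1.3} the vector $\beta_{t,S_1}-\beta^{\ast}_{S_1}$ is $s_1$-sparse with $\ell_2$-norm $\lesssim\sqrt{s_1}$, so Lemma~\ref{lem3} gives the first term as $(\beta_{t,i}-\beta^{\ast}_i)+O(\delta\sqrt{s_1})$. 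The induction hypothesis~\eqref{eq3.1.1} forces $\|\beta_{t,S_1^c}\|_1\lesssim p\cdot(1/p)^2=1/p$, so Lemma~\ref{lem4} bounds the second term by $\delta/p$. Finally Lemma~\ref{lem5} controls the noise by $\sigma\sqrt{\log p/n}$. Under assumptions (B) and the minimal strength condition these error contributions are all $\lesssim m/\log(p/\alpha)$, so $r_{t,i}=(\beta_{t,i}-\beta^{\ast}_i)+O(m/\log(p/\alpha))$.

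Under the standing hypothesis $|\beta_{t,i}-\beta^{\ast}_i|\le|\beta^{\ast}_i|/2$, this immediately yields $\mathrm{sign}(r_{t,i})=-\mathrm{sign}(\beta^{\ast}_i)$ and $|r_{t,i}|\ge |\beta^{\ast}_i|/4$. For $\beta^{\ast}_i>0$ this gives $1-\eta r_{t,i}\ge 1+c\eta\beta^{\ast}_i$ and $1+\eta r_{t,i}\le 1-c\eta\beta^{\ast}_i\le 1$, so iterating,
\begin{align*}
a_{t,i}\ge (\alpha/2)(1+c\eta\beta^{\ast}_i)^t,\qquad |b_{t,i}|\le\alpha/2.
\end{align*}
Squaring and taking the difference, $\beta_{t,i}=a_{t,i}^2-b_{t,i}^2\ge(\alpha^2/4)(1+c\eta\beta^{\ast}_i)^{2t}-\alpha^2/4$, which after rebranding constants matches $(1+c\eta\beta^{\ast}_i)^t\alpha^2-c'\alpha^2$. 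The case $\beta^{\ast}_i<0$ is symmetric with the roles of $a$ and $b$ swapped.

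The main obstacle is the uniform-in-$t$ control of the error term in $r_{t,i}$ throughout the entire first stage $t<T_1$. This is not self-contained in the proposition: it relies on (i)~the induction hypothesis~\eqref{eq3.1.3} being preserved so that the RIP perturbation from the strong-signal residual remains $\lesssim\delta\sqrt{s_1}\lesssim m/\log(p/\alpha)$; (ii)~Proposition~\ref{apro3.1} giving the slow exponential growth of the dense weak-signal/error blocks $a_{t,S_1^c},b_{t,S_1^c}$, which keeps $\|\beta_{t,S_1^c}\|_1\lesssim 1/p$ through the stage; and (iii)~assumption (A) ensuring $\sigma\sqrt{\log p/n}\ll m/\log(p/\alpha)$. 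A small additional verification is that the higher-order terms $\eta^2 r_{t,i}^2$ that appear when passing from $(1+c\eta\beta^{\ast}_i)^{2t}$ to $(1+c\eta\beta^{\ast}_i)^t$ are absorbed using $\eta\kappa\lesssim 1/\log(p/\alpha)$, exactly as in the nonnegative case.
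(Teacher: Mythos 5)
Your overall strategy is the same as the paper's: approximate the gradient coordinate $r_{t,i}=[n^{-1}X^T(X\beta_t-y)]_i$ by $\beta_{t,i}-\beta^\ast_i$ up to an $O(m/\log(p/\alpha))$ error via Lemmas~\ref{lem3}--\ref{lem5} and the induction hypotheses, and then run the multiplicative race between $a_{t,i}$ and $b_{t,i}$. (Working directly with the multiplicative update for $a_{t,i}$ rather than with $a_{t,i}^2$ and the pseudo-inverse/$\xi_t$ bookkeeping of the paper is a harmless simplification; and a small slip aside, your decomposition of $r_{t,i}$ should carry $n^{-1}X^TX(\beta_{t,S_1^c}-\beta^\ast_{S_2})$ rather than $n^{-1}X^TX\beta_{t,S_1^c}$, though the weak signals still fit in the same error budget.)

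The genuine gap is the pivotal deduction ``under $|\beta_{t,i}-\beta^\ast_i|\le|\beta^\ast_i|/2$ we immediately get $\mathrm{sign}(r_{t,i})=-\mathrm{sign}(\beta^\ast_i)$ and $|r_{t,i}|\ge|\beta^\ast_i|/4$.'' This does not follow: the hypothesis only upper-bounds $|\beta_{t,i}-\beta^\ast_i|$, so it allows $\beta_{t,i}=\beta^\ast_i$ (or $\beta_{t,i}>\beta^\ast_i$), in which case $r_{t,i}$ is just the $O(m/\log(p/\alpha))$ perturbation and can have either sign; then $1-\eta r_{t,i}$ need not exceed $1+c\eta\beta^\ast_i$, $a_{t,i}$ need not grow, and $b_{t,i}$ need not shrink, so the iterated bound $a_{t,i}\ge(\alpha/2)(1+c\eta\beta^\ast_i)^t$ collapses. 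What actually drives the exponential growth — and what the paper's proof uses as its case condition — is the one-sided ``still far below the target'' regime $\mathrm{sign}(\beta^\ast_i)\,(\beta_{t,i}-\beta^\ast_i)\le-|\beta^\ast_i|/2$ (equivalently $|\beta_{t,i}|\le|\beta^\ast_i|/2$ with the right sign), which forces $-\mathrm{sign}(\beta^\ast_i)\,r_{t,i}\ge|\beta^\ast_i|/2-O(m/\log(p/\alpha))\ge|\beta^\ast_i|/4$. Relatedly, to conclude a bound of the form $(1+c\eta\beta^\ast_i)^t\alpha^2$ at time $t$ by ``iterating,'' you need this growth-regime condition (and hence the per-step inequality) at \emph{every} earlier step $\tau<t$, or a separate argument that once the iterate exits the regime it stays above the capped level $|\beta^\ast_i|/2$ (this is how Proposition~\ref{pro3.1} states the bound, as a minimum with $m/2$); your writeup asserts the iteration without addressing either point. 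Fixing the proof requires replacing your two-sided hypothesis by the correct one-sided regime and tracking it across the first stage, exactly as in the paper's case analysis.
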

\begin{proof}
	Similar with the proof of proposition \ref{pro2.2}, first we approximate $(n^{-1}X^TXu)\circ v$ by $u\circ v$ based on the RIP condition via lemmas~\ref{lem3} and \ref{lem4},
	\begin{align}
	&\|n^{-1}X^T(X\beta_t-y)-(I_{S_1}\beta_{t}-I_{S_1}\beta^\ast)\|_{\infty} \nonumber\\
	\leq&\, \|n^{-1}X^TX (I_{S_1^c}r_{t})\|_{\infty}+\delta\|r_{t,S_1}\|  +\|X^Tw\|_\infty 
	\lesssim \frac{m}{\log (p/\alpha)},    \label{eq4.2.1}
	\end{align}
	implying that under the condition $ m\lesssim 1$, we have
	\begin{align*}
	&\|n^{-1}X^TX(\beta_t-y)\|_{\infty}\\
	\leq&\, \|\beta_{t,S_1}-\beta^\ast_{S_1}\|_{\infty} +
	\|n^{-1}X^T(X (I_{S_1^c}r_{t})-w)\|_{\infty}+\delta\|\beta_{t,S_1}-\beta^\ast_{S_1}\| 
	\lesssim 1,    
	\end{align*}
	where the last inequality uses $\|\beta_{t,S_1^c}\|_{\infty}\lesssim 1/p$ and $\|\beta_{t,S_1}-\beta^\ast_{S_1}\|\lesssim \sqrt{s}$.
	
	In order to analyze $\beta_{t,S_1}=a_{t,S_1}^2-b_{t,S_1}^2$, let us focus on $a_{t,S_1}^2$ and $b_{t,S_1}^2$, separately. According to the updating rule of $a_{t,S_1}$, we have
	\begin{align*}
	&a_{t+1,S_1}^2=a_{t,S_1}^2-2\eta a_{t,S_1}^2 \circ [n^{-1}X^T(X\beta_{t}-y)]_{S_1}+\eta ^2 a_{t,S_1}^2 \circ [n^{-1}X^T(X\beta_{t}-y)]_{S_1}^2,
	\end{align*}
	where recall that for a vector $a\in\mathbb R^p$, $a_{S_1}$ denote the sub-vector of $a$ with indices in $S_1$. Applying lemmas~\ref{lem3} with $v=\mathbf 1$, we obtain
	\begin{align*}
	&\|a_{t+1,S_1}^2-a_{t,S_1}^2-2\eta a_{t,S_1}^2(\beta_{t,S_1}-\beta^\ast_{S_1})\|_{\infty}\lesssim \eta\frac{m}{\log (p/\alpha)}+\eta^2 \kappa^2 m^2\stackrel{(i)}{\lesssim} \eta\frac{m}{\log (p/\alpha)}.
	\end{align*}
	where in step $(i)$ we used $\eta \kappa m \lesssim \frac{m}{\log (p/\alpha)}$ sand $\kappa m \lesssim 1$. Similar to the nonnegative case, since $\eta  m \leq 1/2$, $\frac{m}{\log (p/\alpha)} \leq 1/2$, we have $a_{t,i}^2/a_{t+1,i}^2 \leq 4$ for $i\in S_1$. Therefore, we can obtain an element-wise bound for $\xi_t=(\xi_{t,i})_{i\in S_1}$,
	\begin{equation*}
	\xi_{t,i}:\,=1-a_{t,i}^2/a_{t+1,i}^2 \circ(2\eta ({\beta}_{t,i}-\beta^\ast_i)), 
	\end{equation*} 
	as $\|\xi_t\|_{\infty}\lesssim \eta\frac{m}{\log (p/\alpha)}$.
	Equivalently, we can write
	\begin{align}
	a_{t+1,i}^2= a_{t,i}^2 (1-2\eta ({\beta}_{t,i}-\beta^\ast_i))+\xi_{t,i} a_{t+1,i}^2. \label{eq4.2.2}
	\end{align} 
	Now let us divide into two cases depending on the sign of $\beta^\ast_i,\ i\in S_1$:
	
	\emph{Case $\beta_i^* >0$:} When ${\beta}_{t,i}-\beta^\ast_i\leq -\beta^\ast_i/2$, since $\beta^\ast_i \geq m$, we have by equation~\eqref{eq4.2.2},
	\begin{align*}
	a_{t+1,i}^2 &\geq \frac{a_{t,i}^2(1+\eta \beta^\ast_i)}{1+c\eta\frac{m}{\log (p/\alpha)}} \geq  a_{t,i}^2(1+\eta \beta^\ast_i)(1-c\eta\frac{\beta_i^*}{\log (p/\alpha)} )\geq  a_{t,i}^2(1+\eta \beta^\ast_i/4),
	\end{align*}
	where the last inequality follows since $1/\log(p/\alpha)\leq 1/2$ and $\eta \beta^\ast_i \leq 1/2$. Similarly, we can analyze $b_{t,S}^2$ to get
	\begin{align*}
	b_{t+1,i}^2 &\leq \frac{b_{t,i}^2(1-\eta \beta^\ast_i)}{1-c\eta\delta\sqrt{s}}\leq  b_{t,i}^2(1-\eta \beta^\ast_i)(1+c\eta\beta_i^*/\log(p/\alpha))\leq  b_{t,i}^2(1-\eta \beta^\ast_i/4).
	\end{align*}
	Therefore, $a_{t+1,i}^2$ increases at an exponential rate faster than the noise term $a_{t+1,S_1^c}$ while $b_{t+1,i}^2$ decreases to zero at an exponential rate, and when $a_{t+1,i}$ increases to $\beta_{i}^\ast/2$, $b_{t+1,i}$ decreases to $O( \alpha^4)$ correspondingly. A combination of these two leads to the first claimed bound for $\beta_{i}^\ast>0$.
	
	\emph{Case $\beta_i^* <0$:} The analysis for the case is similar: when ${\beta}_{t,i}-\beta^\ast_i\geq -\beta^\ast_i/2$, we have:
	\begin{align*}
	a_{t+1,i}^2 \leq  a_{t,i}^2(1-\eta \beta^\ast_i/4),\quad\mbox{and}\quad b_{t+1,i}^2 \geq  b_{t,i}^2(1+\eta \beta^\ast_i/4),
	\end{align*}
	which leads to the second claimed bound for $\beta_{i}^\ast<0$.
\end{proof}

As a consequence of the proof in this step, after at most $T\geq \Theta ( \frac{\log(m/\alpha^2)}{\eta m})$ iterations, we are guaranteed to have have $|{\beta}_{T,i}|\geq |\beta^\ast_i|/2$ with $sign({\beta}_{T,i})=sign(\beta^\ast_i)$ and $\min\{a_{T,i}^2, b_{T,i}^2\} \leq c \alpha^4$.

\subsubsection*{Step 3: Prove Induction Hypothesis}
\begin{pro}\label{apro3.3}
	Under assumptions in theorem~\ref{thm2},  the induction hypothesis~\eqref{eq3.1.1}-\eqref{eq3.1.3} at $t<T_1$, we have:
	\begin{align*}
	&\|a_{t+1,S_1^c}\|_{\infty}  \lesssim 1/p,\quad \|b_{t+1,S_1^c}\|_{\infty}\lesssim 1/p ,\\
	&\|a_{t+1,S_1}\|_{\infty} \lesssim 1, \quad\|b_{t+1,S_1}\|_{\infty}\lesssim 1 ,\\
	&\mbox{and}\quad\|{\beta}_{t+1,S_1}-\beta^\ast_{S_1}\|\lesssim 1.
	\end{align*}
	Then for $T_1<t<T_2$, we still have:
	\begin{align*}
	\|a_{t+1,S_1^c}\|_{\infty}  \lesssim 1/p,\quad \|b_{t+1,S_1^c}\|_{\infty}\lesssim 1/p.
	\end{align*}
\end{pro}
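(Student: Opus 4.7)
The argument mirrors that of Proposition \ref{apro2.3} in the nonnegative case, with the only new ingredient being the need to track the pair $(a_t,b_t)$ simultaneously rather than a single $u_t$. I would break the induction step into three pieces: the off-support ($S_1^c$) bounds, the on-support ($S_1$) $\ell_\infty$ bounds, and the vector bound on $\beta_{t+1,S_1} - \beta^*_{S_1}$.

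For the $S_1^c$ coordinates, Proposition \ref{apro3.1} already supplies the element-wise geometric recursion $\|a_{t+1,S_1^c}\|_\infty \leq (1 + c\,\eta m/\log(p/\alpha))\,\|a_{t,S_1^c}\|_\infty$ for $t<T_1$, together with the analogous bound for $b$. Starting from $\|a_{0,S_1^c}\|_\infty \leq \alpha$ and $\|b_{0,S_1^c}\|_\infty = 0$ and iterating over the $T_1 = \Theta(\log(p/\alpha)/(\eta m))$ steps yields a total multiplicative factor of $(1 + c/T_1)^{T_1} \leq e^{c}$, so both quantities stay of order $\alpha$, which is $\lesssim 1/p$ by Assumption (C). For $T_1<t<T_2$ I would invoke the sharpened recursion (the second half of Proposition \ref{apro3.1}) with factor $(1 + c\,\eta\tau)$, which becomes available once $\|\beta_{t,S_1} - \beta^*_{S_1}\| \leq \sqrt{\epsilon}$; since $T_2 = \Theta(1/(\eta\tau))$, the same $e^c$-type constant blow-up preserves the $O(1/p)$ bound.

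For the $S_1$ $\ell_\infty$ bounds, I would use the updating identity for $a_{t,i}^2$ that was derived inside the proof of Proposition \ref{apro3.2}, namely $a_{t+1,i}^2 = a_{t,i}^2\,(1 - 2\eta(\beta_{t,i} - \beta_i^*)) + \xi_{t,i}\, a_{t+1,i}^2$ with $\|\xi_t\|_\infty \lesssim \eta m/\log(p/\alpha)$. When $a_{t,i}^2 \gtrsim |\beta_i^*|$ and the signal $\beta_{t,i}$ has locked onto $\mathrm{sign}(\beta_i^*)$, the factor $1 - 2\eta(\beta_{t,i} - \beta_i^*)$ behaves like the map $x \mapsto x(1-2\eta(x - \beta_i^*))$ in the nonnegative case and stays $\leq 1$, yielding $a_{t+1,i}^2 \leq a_{t,i}^2$ in that regime. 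Combined with the earlier exponential growth phase starting from $\alpha^2$, this gives $\|a_{t+1,S_1}\|_\infty \lesssim \max(|\beta_i^*|, \alpha^2) \lesssim 1$ via Assumption (A), and the symmetric calculation handles $b$. The bound $\|\beta_{t+1,S_1} - \beta^*_{S_1}\| \lesssim \sqrt{s_1} \leq \sqrt{s}$ is then immediate from the triangle inequality combined with $|\beta_{t+1,j}|, |\beta_j^*| \lesssim 1$ for $j \in S_1$.

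The main obstacle is ruling out the scenario where $a_{t,i}^2$ and $b_{t,i}^2$ both grow large in tandem while $\beta_{t,i} = a_{t,i}^2 - b_{t,i}^2$ stays moderate, which would prevent the $(1 - 2\eta(\beta_{t,i} - \beta_i^*))$ factor from becoming contractive. Proposition \ref{apro3.2} defuses precisely this concern: starting from $b_{0,i}=0$ and $a_{0,i}=\alpha$, the dynamics force one of $a_{t,i}^2, b_{t,i}^2$ to remain of order $\alpha^4$ while the other grows, so exactly one component can become large and the nonnegative-case monotonicity argument applies cleanly to control it. This asymmetry is the crucial consequence of the $\alpha$-initialization together with the sign alignment from Proposition \ref{apro3.2}, and without it the $\ell_\infty$ bound on the signal coordinates could not be closed.
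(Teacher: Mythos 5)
Your proposal is correct and follows essentially the same route as the paper: the off-support coordinates are controlled by iterating the geometric recursions of Proposition~\ref{apro3.1} over $T_1=\Theta(\log(p/\alpha)/(\eta m))$ (resp.\ the $\tau$-rate recursion over $T_2=\Theta(1/(\eta\tau))$) steps with a bounded $(1+c/T)^{T}\le e^{c}$ blow-up, and the on-support $\ell_\infty$ and $\ell_2$ bounds come from the updating identity of Proposition~\ref{apro3.2} plus the triangle inequality, exactly as the paper argues ``along similar lines'' as Proposition~\ref{apro2.3}. One harmless slip: with $g_0=\alpha\mathbf 1$, $l_0=0$ the initialization gives $a_0=b_0=(\alpha/2)\mathbf 1$ (it is $l_0$, not $b_0$, that vanishes), but since both are $\lesssim\alpha$ this does not affect your argument.
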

\begin{proof}
	Our induction hypothesis 
	\begin{align*}
	&\|a_{t,S_1^c}\|_{\infty}\lesssim 1/p,\quad \|b_{t,S_1^c}\|_{\infty}\lesssim 1/p,\\
	&\|a_{t,S_1}\|_{\infty}\lesssim 1, \quad\|b_{t,S_1}\|_{\infty}\lesssim 1,
	\end{align*} 
	implies $\|\beta_{t,S_1^c}\|_{\infty}\lesssim 1/p^2$ and $\|\beta_{t,S_1^c}-\beta^\ast_{t,S_1^c}\|\lesssim \sqrt{s}$. By updating rules:
	\begin{align*}
	a_{t+1}=a_{t}-\eta a_{t}\circ n^{-1}X^T(X {\beta}_t-y),\\
	b_{t+1}=b_{t}+\eta b_{t}\circ n^{-1}X^T(X {\beta}_t-y),
	\end{align*}
	and our condition $\eta m/\log(p/\alpha) \lesssim 1/T_1$, we can prove along similar lines as in the proof of \ref{pro2.3} that
	\begin{align*}
	&\|a_{t+1,S_1^c}\|_{\infty}\lesssim 1/p,\quad \|b_{t+1,S_1^c}\|_{\infty}\lesssim 1/p ,\\
	&\|a_{t+1,S_1}\|_{\infty}\lesssim 1, \quad\|b_{t+1,S_1}\|_{\infty}\lesssim 1.
	\end{align*}
	These inequalities also imply $\|\beta_{t+1,S_1^c}\|_{\infty}\lesssim 1/p^2$ and $\|\beta_{t+1,S_1}-\beta^\ast_{t+1,S_1}\|\lesssim \sqrt{s}$. Then similarly for $T_1<t<T_2$, we can also show:
	\begin{align*}
	\|a_{t+1,S_1}\|_{\infty}\lesssim 1, \quad\|b_{t+1,S_1}\|_{\infty}\lesssim 1.
	\end{align*} 
\end{proof}

\subsection{Proof of Proposition \ref{pro3.4}}
\begin{proof}
	The proof is similar to that of proposition \ref{pro2.4}.\\
	In particular, we can decompose $\|\beta_{t+1}-\beta^\ast\|^2=\|a_{t+1}^2-b_{t+1}^2-\beta^\ast\|^2$ into 5 parts according to the order of $\eta$:
	\begin{align*}
	\|\beta_{t+1}-\beta^\ast\|^2&=\|(a_{t}-\eta a_{t} \circ n^{-1}X^T(X r_t-w))^2-(b_{t}+\eta b_{t} \circ n^{-1}X^T(X r_t-w))^2-\beta^\ast\|^2, \\
	&=\|\beta_t-2\eta(a_t^2+b_t^2)\circ n^{-1}X^T(X r_t-w)+\eta^2\beta_t\circ (n^{-1}X^T(X r_t-w))^2-\beta^\ast\|^2 ,\\
	&=\|\beta_{t}^2-\beta^\ast\|^2-4\eta\langle (a_t^2+b_t^2)\circ n^{-1}X^T(X r_t-w),\beta_t-\beta^\ast\rangle,\\
	&\quad+4\eta^2 \| (a_t^2+b_t^2)\circ n^{-1}X^T(X r_t-w)\|^2+2\eta^2 \langle \beta_t\circ (n^{-1}X^T(X r_t-w))^2 ,\beta_{t}-\beta^\ast\rangle, \\
	&\quad-4\eta^3 \langle \beta_t\circ (n^{-1}X^T(X r_t-w))^2 ,(a_t^2+b_t^2)\circ n^{-1}X^T(X r_t-w)\rangle,\\
	&\quad+\eta^4 \|\beta_t\circ (n^{-1}X^T(X r_t-w))^2\|^2,
	\end{align*} 
	where for notational simplicity, we denoted $\beta_t-\beta^\ast$ by $r_t$.
	First look at the term $a_t^2+b_t^2$. We have the component-wise bound $a_{t,i}^2+b_{t,i}^2=g_{t,i}^2+l_{t,i}^2\geq |g_{t,i} \circ l_{t,i}|=|\beta_{t,i}|$, implying
	\begin{align}\label{3.4.3}
	a_{t,i}^2+b_{t,i}^2-|\beta_{t,i}|=2\min\{a_{t,i}^2,b_{t,i}^2\}\stackrel{(i)}{\lesssim} \alpha^4,
	\end{align}
	where (i) follows by proof of proposition \ref{pro3.2}, since the smaller one of $\{a_{t,i}^2,b_{t,i}^2\}$ will be its initial value decreasing at an exponential rate until the rate $\alpha^4$. Similarly, we have the decomposition:
	\begin{align*}
	\|\beta_{t+1}-\beta^\ast\|^2&=\|\beta_{t+1,S_1}-\beta_{S_1}^\ast\|^2+\|\beta_{t+1,S_2}-\beta_{S_2}^\ast\|^2+\|\beta_{t+1,S^c}\|^2 \\
	&\leq \|\beta_{t+1,S_1}-\beta_{S_1}^\ast\|^2+ cs_2 \frac{\log p}{n}+ c' p \alpha^4,
	\end{align*}
	which is based on the orthogonality between different sub vectors and the induction hypothesis, so we only need to calculate the errors induced by the strong signals.  \\
	For the gradient, we have:
	\begin{align}
	\|n^{-1}X^T(Xr_t-w)\|_{\infty}\leq &\, \|\beta_{t,S}-\beta^\ast\|_{\infty}+\delta\|\beta_{t,S}-\beta^\ast\| \nonumber \\
	&+\|\beta_{t,S^c}\|_{\infty}+\delta p \|\beta_{t,S^c}\|_{\infty} +\|X^Tw\|_\infty \leq c; \label{3.1.13}\\
	\|n^{-1}X^T(X (I_{S_1^c} r_t) -w)\|_{\infty}\leq&\,  c \delta p \alpha^2+ c \sigma \sqrt{\frac{\log p}{n}}. \label{eq3.1.15}
	\end{align}
	Note that $I_{S_1} \beta_t$ is $s_1$ sparse, so we have the following core error term  characterization:
	\begin{align}
	\|I_{S_1} \beta_t \circ n^{-1}X^T(X r_{t,S_1^c}-w)\| \leq&\,  cs \delta^2 p^2 \alpha^4+ c' \sigma^2 M s_1 \frac{\log p}{n} \leq c (\alpha^2+\sigma^2 \frac{Ms_1}{n}). \label{eq3.1.16}
	\end{align}
	Similarly, our analysis for errors are based on equation~\eqref{eq3.1.15} and~\eqref{eq3.1.16}.
	
	For the order one term, recall the bound
	\begin{align*}
	&\langle (a_t^2+b_t^2)\circ n^{-1}X^T(X r_t-w),I_{S_1} r_{t}\rangle \\
	& \stackrel{(i)}{\geq} \langle |I_{S_1} \beta_t|\circ n^{-1}X^T (X r_t-w), I_{S_1} r_{t}\rangle -s \alpha^4 \\
	&=\langle |I_{S_1} \beta_t|\circ n^{-1}X^T X (I_{S_1} r_t), I_{S_1} r_{t}\rangle 
	+\langle |I_{S_1} \beta_t|\circ n^{-1}X^T (X (I_{S_1^c} r_t)-w), I_{S_1}r_{t}\rangle\\
	&\geq \left\|\,r_{t,S_1}\circ \sqrt{|\beta_{t,S_1}|}\,\right\|^2-\delta  \|\,r_{t,S_1}\|^2\\
	&-\|\sqrt{|\beta_{t,S_1}|} \circ n^{-1}X^T (X (I_{S_1^c} r_t)-w)\|^2/2 -\| \sqrt{|\beta_{t,S_1}|} \circ \,r_{t,S_1}\|^2/2 \\
	&\geq \frac{1}{2}\left\|\,r_{t,S_1}\circ \sqrt{|\beta_{t,S_1}|}\,\right\|^2-\delta  \|\,r_{t,S_1}\|^2 -c (\alpha^2+\sigma^2 \frac{Ms_1}{n}).	
	\end{align*} 
	where $(i)$ is based on Cauchy-Schwarz inequality and  we use the fact $(a_t,b_t)\in\Theta_{LR}^G$, lemma~\ref{lem2}, equation~\eqref{eq3.1.16}.
	
	For the rest of terms, we can see they are all based on the bound: 
	\begin{align*}\label{eq:3.1.17}
	&\|I_{S_1} \sqrt{\beta_t}   \circ n^{-1}X^T(Xr_t-w)\|^2\\ 
	&\leq \|I_{S_1} \sqrt{\beta_t}   \circ n^{-1}X^TX (I_S r_t)\|^2+\|I_{S_1} \sqrt{\beta_t} \circ n^{-1}X^T(I_{S_1^c} r_t-w)\|^2 \\
	&\lesssim  c \|r_{t,S_1}\|^2+c' (\alpha^2+\sigma^2 \frac{Ms_1}{n}).
	\end{align*} 
	
	First consider the order two terms $\langle \beta_t\circ (n^{-1}X^TX r_t)^2 ,I_{S_1}(\beta_{t}-\beta^\ast)\rangle$ and $\| I_{S_1}(a_t^2+b_t^2)\circ n^{-1}X^TX r_t\|^2$. For the former one, since $\|r_t\|_{\infty}\lesssim 1$, we have
	\begin{align*}
	\langle \beta_t\circ (n^{-1}X^TX r_t)^2 ,I_{S_1} r_{t}\rangle &= \langle I_{S_1}\beta_t\circ n^{-1}X^T(X r_t-w), n^{-1}X^T(X r_t-w) \circ r_t)\rangle\\
	&\leq c \|I_{S_1}\sqrt{\beta_t} \circ n^{-1}X^T(X r_t-w)\|^2 \leq  c \|r_{t,S_1}\|^2+c' (\alpha^2+\sigma^2 \frac{Ms_1}{n}).
	\end{align*}
	where we use the condition $(a_t,b_t)\in\Theta_{LR}^G$. 
	The analysis of the latter one $\| I_{S_1}(a_t^2+b_t^2)\circ n^{-1}X^TX r_t\|^2$  is similar based on $\|a_t^2+b_t^2 - \beta_t\|_{\infty}\lesssim \alpha^2$ and $\|\sqrt{\beta_t} \| \lesssim 1 $: 
	\begin{align*}
	&\quad\|I_{S_1}(a_t^2+b_t^2)\circ n^{-1}X^T(X r_t-w)\|^2 \\
	&\leq \|(I_{S_1} |\beta_t| \circ n^{-1}X^T(X r_t-w)\|^2 +s \alpha^4 \leq  c \|r_{t,S_1}\|^2+c'  (\alpha^2+\sigma^2 \frac{Ms_1}{n}).
	\end{align*}
	
	For the order three term, using inequality $\|I_{S_1} (a_t^2+b_t^2)\circ (n^{-1}X^TX r_t)\|_{\infty}\lesssim 1$, we obtain:
	\begin{align*}
	&\quad\langle \beta_t\circ (n^{-1}X^TX r_t)^2 ,I_{S_1} (a_t^2+b_t^2)\circ (n^{-1}X^TX r_t)\rangle\\
	&\leq c \|(I_{S_1}\sqrt{\beta_t}) \circ n^{-1}X^T(X r_t-w)\|^2 \leq  c \|r_{t,S_1}\|^2+c'  (\alpha^2+\sigma^2 \frac{Ms_1}{n}).
	\end{align*} 
	
	For the order four term, we have 
	\begin{align*}
	&\|I_{S_1} \beta_t\circ (n^{-1}X^TX r_t)^2\|^2\leq c \|I_{S_1} \sqrt{\beta_t} (n^{-1}X^TX r_t)\|^2  \leq  c \|r_{t,S_1}\|^2+c'  (\alpha^2+\sigma^2 \frac{Ms_1}{n}).
	\end{align*}
	
	Putting all pieces together, and using our condition $\eta \lesssim 1$, we can obtain
	\begin{align*}
	\|I_{S_1} r_{t+1}\|^2 &\leq \|I_{S_1} r_{t}\|^2-2\eta \,\Big\|\,r_{t,S_1}\circ \sqrt{|\beta_{t,S_1}|}\,\Big\|^2 \\
	&+c  (\alpha^2+\sigma^2 \frac{Ms_1}{n})+4 \eta \delta \|r_{t,S_1}\|^2.
	\end{align*} 
	Since for $(a_t,b_t)\in\Theta_{LR}^G$, we have $\|r_{t,S_1}\circ \sqrt{|\beta_{t,S_1}|}\|^2\geq \|r_{t,S_1}\|^2\cdot m/2 $. Therefore, under our conditions on $(\delta,\eta)$ with $\delta  \kappa \leq 1/4$, we have:
	\begin{equation}
	\|\beta_{t+1,S_1}-\beta^\ast_{S_1}\|^2\leq(1-\eta m)\|\beta_{t,S_1}-\beta^\ast_{S_1}\|^2+c  (\alpha^2+\sigma^2 \frac{Ms_1}{n}) .
	\end{equation} 
\end{proof}
\subsection{Proof of Theorem~\ref{thm2}} 
The proof is the same as that of theorem~\ref{thm1}, but now using propositions~\ref{pro3.1} and \ref{pro3.4}.

\subsection{Proof of Theorem~\ref{thm5}} 
According to the proof of Theorem~\ref{thm2} , we have $\|\beta_{t,S_1}-\beta_{S_1}^*\|_{\infty} \leq \|\beta_{t,S_1}-\beta_{S_1}^*\| \leq \sigma \sqrt{s /n} $, therefore $\beta_{t,j}> \beta^*_j - \sigma \sqrt{s /n} \geq \lambda$ based on the definition of the strong signals for $j \in S_1$, while for errors and weak signals $j \in S_1^c$, we have $\|\beta_{t,S_1^c}\|_{\infty} \leq c\alpha^2 < \lambda$. Consequently, after the component-wise hard thresholding operation at level $\lambda$, all strong signals remains nonzero while all weak signals and errors become zero.
\bibliographystyle{apalike}

\bibliography{implicit}
\end{document}